\documentclass{amsart}
\pdfoutput=1

\usepackage{amssymb}
\usepackage{amsthm}
\usepackage{amsfonts}
\usepackage{amsmath}
\usepackage{pmboxdraw}
\usepackage{verbatim} 
\usepackage{graphicx}
\usepackage{color}

\usepackage{tikz}
\usetikzlibrary{decorations.pathreplacing,calc}
\usepackage[colorlinks=true, citecolor=blue, filecolor=black, linkcolor=black, urlcolor=black]{hyperref}
\usetikzlibrary{decorations.pathreplacing}
\usepackage{cite}
\usepackage[normalem]{ulem}
\usepackage{subcaption}
\usepackage{bbm}
\usepackage{bm}
\usepackage{mathtools}
\usepackage{todonotes}
\usepackage{kantlipsum}
\allowdisplaybreaks

\newcommand{\RN}[1]{%
	\textup{\uppercase\expandafter{\romannumeral#1}}%
}

\def\ve{\varepsilon}

\def\Re{ \mathrm{Re}}

\def\C{\mathbb{C}}

\def\R{\mathbb{R}}

\newcommand{\re}{\operatorname{Re}}
\newcommand{\im}{\operatorname{Im}}

\theoremstyle{plain}
\newtheorem{thm}{Theorem}[section]

\newtheorem{lem}[thm]{Lemma}

\newtheorem{prop}[thm]{Proposition}

\theoremstyle{remark}

\newtheorem{rem}{Remark}

\numberwithin{equation}{section}

\begin{document}

\title[Numerical ranges of non-normal random matrices]{Numerical ranges of non-normal random matrices: \\ elliptic Ginibre and non-Hermitian Wishart ensembles}
\author{Sung-Soo Byun}
\address{Department of Mathematical Sciences and Research Institute of Mathematics, Seoul National University, Seoul 151-747, Republic of Korea}
\email{sungsoobyun@snu.ac.kr}

\author{Joo Young Park}
\address{College of Medicine, Seoul National University, Seoul 151-747, Republic of Korea}
\email{jyp531@snu.ac.kr}

\begin{abstract}
The numerical range of a non-normal matrix plays a central role as a descriptor of non-normal effects beyond spectral information. 
We study a class of fundamental non-Hermitian random matrix ensembles that interpolate between the Hermitian and non-Hermitian regimes. 
Our analysis focuses on the elliptic Ginibre ensemble and its chiral counterpart, as well as on non-Hermitian Wishart matrices. 
For each of these models, we explicitly characterise the geometry of the numerical range in the large-system limit. 
In particular, we show that for the elliptic Ginibre ensemble and its chiral version, the limiting numerical range is an ellipse, whereas for the non-Hermitian Wishart ensemble it is described by a non-elliptic envelope. Furthermore, we determine the numerical range of products of $n$ independent elliptic Ginibre matrices, which recovers, in the cases $n=1$ and $n=2$, the results for the elliptic Ginibre ensemble and the non-Hermitian Wishart ensemble at maximal non-Hermiticity, respectively. 
\end{abstract}

\maketitle


\section{Introduction and main results}

In random matrix theory, normal matrices are largely governed by their spectral statistics, a fact that allows their behaviour to be understood primarily through eigenvalue distributions. By contrast, non-normal matrices exhibit a range of genuinely new phenomena such as substantial eigenvector overlaps \cite{CM98,BD21,ATTZ20,CR22,Fyo18,ABN24}, sensitivity to perturbations \cite{BNST17,FT21}, and the emergence of pseudospectrum \cite{BGNTW15,MKS24}. These effects highlight intrinsic limitations of purely spectral descriptions and naturally motivate the study of operator-valued objects that go beyond the spectrum.

Among these objects, the \emph{numerical range} (also known as the \emph{field of values}) \cite{GR97,HJ94} plays a central role in capturing non-normal effects beyond the spectrum. 
For a square matrix $A \in \C^{N \times N}$, it is defined as 
\begin{equation} \label{def of numerical range}
W(A):= \{ (Ay,y): \| y\|_2=1\}. 
\end{equation}
While the numerical range coincides with the eigenvalue spectrum for normal matrices, in the non-normal case it is in general a strictly larger convex set that contains the eigenvalue spectrum \cite{John76}. As such, it provides a robust descriptor of non-normal behaviour and often reveals geometric or stability properties that remain invisible at the level of the spectrum alone. Moreover, the numerical range serves as an effective tool for analysing convergence rates of iterative schemes for solving linear systems \cite{Eier93,Tad25}.

On the other hand, non-Hermitian random matrix theory has attracted considerable attention in recent years, motivated by a broad range of applications and structural phenomena that go beyond the Hermitian setting; see e.g. \cite{BF25} and references therein. A fundamental model in this area is the complex Ginibre matrix, an \(N \times N\) matrix with i.i.d.\ complex Gaussian entries of mean zero and variance \(1/N\). A classical result for this model is the circular law, which states that the empirical eigenvalue distribution converges to the uniform measure on the centred unit disc as $N \to \infty$. Despite the long history of the Ginibre ensemble, its numerical range was investigated only relatively recently. It was shown in the seminal work \cite{CGLZ14} that the numerical range converges to the centred disc of radius $\sqrt{2}$ as $N \to \infty$. More recently, the sharp convergence rates and fluctuations for the numerical radius of random matrix with general i.i.d. entries were established in \cite{BC25}, building on earlier results on small deviation tail estimates and correlation–decorrelation transitions for Wigner matrices \cite{BCEHK25,BCEHK25a}.

Beyond the Ginibre matrix whose eigenvalue spectra follow the circular law, there exist several variants of non-Hermitian random matrices, for instance those with correlated entries. In such models, the limiting eigenvalue distribution typically departs from radial symmetry and instead converges to a non-circular measure supported on a domain of nontrivial geometry, often referred to as the \emph{droplet}. While the geometric properties of these spectral limits have been extensively studied (see e.g. \cite{BF25a} and references therein), the corresponding numerical ranges have so far not been systematically investigated.

In this work, we pursue this direction by analysing several well-studied non-Hermitian random matrix ensembles, with particular emphasis on the elliptic Ginibre ensemble and its chiral counterpart, as well as a non-Hermitian Wishart ensemble. These models provide natural non-Hermitian extensions of the classical Gaussian and Laguerre unitary ensembles (GUE and LUE, respectively) \cite{Fo10}.

We begin by introducing the random matrix models and their basic statistical properties. In these models, a non-Hermiticity parameter $\tau \in [0,1]$ plays a central role. 

\begin{itemize}
    \item \textbf{Elliptic Ginibre matrix.} The elliptic Ginibre matrix is one of the most extensively studied models interpolating between Hermitian and non-Hermitian random matrices. It is constructed by a linear interpolation between the Hermitian and anti-Hermitian parts of the Ginibre matrix: 
\begin{equation} \label{def of eGinibre}
X^{ \rm e }:= \frac{\sqrt{1+\tau}}{2}(G+G^*)+\frac{\sqrt{1-\tau}}{2}(G-G^*),
\end{equation} 
where $G$ is the complex Ginibre matrix. In particular, $X^{ \rm e }$ reduces to the Ginibre matrix when $\tau=0$, and to the GUE when $\tau=1$.
It is well known, under the name of the \emph{elliptic law}, that the eigenvalues of the elliptic Ginibre matrix converge to the uniform distribution on an ellipse
\begin{equation} \label{def of S for eGinUE}
 S^{ \rm e } :=  \Big\{ (x,y) \in \R^2: \Big( \frac{x}{1+\tau}\Big)^2 + \Big( \frac{y}{1-\tau} \Big)^2 \le 1 \Big\}. 
\end{equation}
See Figure~\ref{Fig_elliptic and chiral} (A)--(D). 
We refer the reader to \cite[Section 2.3]{BF25} for further references on this model. 
\smallskip 
\item \textbf{Chiral elliptic Ginibre matrix.} The chiral version of the elliptic Ginibre matrix was introduced in the context of quantum chromodynamics with chemical potential \cite{Os04,Ste96,AB10}. In this model, one introduces an additional non-negative parameter $\nu$, and considers two $N\times (N+\nu)$ random matrices $P$ and $Q$ with independent complex Gaussian entries of mean zero and variance $1/(2N)$. These matrices serve as building blocks for the correlated matrices
\begin{equation} \label{def of X1 X2} 
X_1:=\sqrt{1+\tau} \, P+\sqrt{1-\tau} \, Q, \qquad X_2:=\sqrt{1+\tau} \, P-\sqrt{1-\tau} \, Q. 
\end{equation}
The chiral elliptic Ginibre matrix is then defined as a random Dirac matrix of size $(2N+\nu)\times(2N+\nu)$:
\begin{equation} \label{def of Dirac matrix}
X^{ \rm ce }:= \begin{bmatrix}
0 & X_1
\\
X_2^*&0 
\end{bmatrix}.  
\end{equation}
When $\tau=1$ this model reduces to the standard Hermitian chiral GUE \cite[Section 3.1]{Fo10}. 
In order to describe the macroscopic behaviour of this model, one makes use of the scaling 
\begin{equation} \label{def of scaling nu}
\lim_{N \to \infty} \frac{\nu}{N} = \alpha \in [0,\infty). 
\end{equation}
Then it was shown in \cite[Corollary 2]{ABK21} that the random eigenvalues tend to occupy the compact set enclosed by the quartic curve with equation: 
	\begin{equation}   \label{def of S for Dirac}
S^{ \rm ce }:=\Big\{	(x,y) \in \R^2: (x^2+y^2)^2+  \frac{16\tau^2}{(1-\tau^2)^2} x^2y^2-2\tau(2+\alpha)(x^2-y^2) \le ( 1+\alpha-\tau^2 ) (1-(1+\alpha)\tau^2 ) \Big\}. 
	\end{equation}
In particular, when $\alpha = 0$, this compact set reduces to the ellipse defined in \eqref{def of S for eGinUE}. A notable feature of $S^{ \rm ce }$ is that for $\alpha > 0$, its topology exhibits a phase transition: if $\tau < 1/\sqrt{1+\alpha}$, the droplet is connected, whereas if $\tau > 1/\sqrt{1+\alpha}$, it consists of two connected components. See Figure~\ref{Fig_elliptic and chiral} (E)--(H). 
\smallskip 
\item \textbf{Non-Hermitian Wishart matrix.} The non-Hermitian Wishart matrix, also known as the sample cross-covariance matrix model, was introduced as a framework for analysing time series based on covariance matrices constructed from time-lagged correlation matrices; see e.g. \cite{KS10,BBD23}.
It is defined as 
\begin{equation}\label{def of nWishart}
X^{ \rm w }:=X_1 X_2^*, 
\end{equation}
where $X_1$ and $X_2$ are given by \eqref{def of X1 X2}. 
When $\tau=0$, the model reduces to the product of two rectangular Ginibre matrices, whereas in the case $\tau=1$ it reduces to the LUE. 
It was shown in \cite[Theorem 1]{ABK21} that the eigenvalues tend to occupy the shifted ellipse  
\begin{equation}
   S^{ \rm w } : = \Big\{ (x,y) \in \R^2: \Big( \frac{x-\tau(2+\alpha)}{(1+\tau^2)\sqrt{1+\alpha}} \Big)^2 + \Big( \frac{y}{(1-\tau^2)\sqrt{1+\alpha}} \Big)^2 \le 1 \Big\}.
\end{equation} 
See Figure~\ref{Fig_NWishart}. We refer the reader to \cite{BN24} for further references on this model. 
\end{itemize}
 
In this work, we determine the limiting numerical range for each of the three models introduced above.  We begin by presenting our results for the elliptic Ginibre matrix and its chiral counterpart.
For $a,b >0$, we write  
\begin{equation} \label{def of ellipse gen}
E_{a,b}:= \Big \{ (x,y)\in \R^2: (x/a)^2+(y/b)^2 \le 1 \Big\} . 
\end{equation} 
We also denote by  
\begin{equation}
    d_H(X, Y):= \max\Big\{ {\sup_{x \in X}d(x, Y)}, {\sup_{y \in Y}d(X, y)} \Big\}
\end{equation}
the Hausdorff distance between two subsets \(X\) and \(Y\) of the complex plane. 

\begin{thm}[\textbf{Numerical range of elliptic and chiral elliptic Ginibre matrices}] \label{Thm_elliptic and chiral} Let $\tau \in [0,1]$. 
\begin{itemize}
    \item[(i)] \textbf{\textup{(Elliptic Ginibre matrix)}} Let $X^{ \rm e }$ be the elliptic Ginibre matrix of size $N$. Then we have 
   \begin{equation}
   \lim_{N \to \infty} d_H(W(X^{ \rm e }), E_{a,b}) = 0,  
\end{equation}
almost surely, where 
\begin{equation} \label{def of a b eGinUE}
a \equiv a(\tau):= \sqrt{2(1+\tau)}, \qquad b \equiv b(\tau):= \sqrt{2(1-\tau)}. 
\end{equation}
  \item[(ii)] \textbf{\textup{(Chiral elliptic Ginibre matrix)}} Let $X^{ \rm ce }$ be the chiral elliptic Ginibre matrix of size $2N+\nu$. Assume that $\nu$ scales proportionally to $N$ as specified in \eqref{def of scaling nu}. Then we have 
   \begin{equation}
   \lim_{N \to \infty} d_H(W(X^{ \rm ce }), E_{a,b}) = 0,  
\end{equation}
almost surely, where 
\begin{equation} \label{def of a b chiral}
a \equiv a(\tau,\alpha):= \frac{\sqrt{1+\tau}(\sqrt{1+\alpha} + 1)}{\sqrt{2}} , \qquad b\equiv b(\tau,\alpha):= \frac{\sqrt{1-\tau}(\sqrt{1+\alpha} + 1)}{\sqrt{2}} .
\end{equation}
\end{itemize}
\end{thm}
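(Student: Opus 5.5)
Since $W(A)$ is compact and convex, it is determined by its support function
\[
h_A(\theta) := \max_{z \in W(A)} \re(e^{-i\theta} z) = \lambda_{\max}\Big( \tfrac{1}{2}\big(e^{-i\theta}A + e^{i\theta}A^*\big)\Big) =: \lambda_{\max}(H_\theta(A)).
\]
For compact convex sets, the Hausdorff distance equals the sup-norm distance of support functions. The support function of $E_{a,b}$ is $\sqrt{a^2\cos^2\theta + b^2\sin^2\theta}$. So the plan is three steps. First, show that for each fixed $\theta$, $\lambda_{\max}(H_\theta)$ converges almost surely to this value. Second, pass to a finite net of angles. Third, control the modulus of continuity in $\theta$ using $\|A\|$, which is almost surely bounded. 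This last step works because $|h_A(\theta)-h_A(\theta')|\le \|A\|\,|\theta-\theta'|$.

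\textbf{Case (i).} Write $G = (G_1 + i G_2)/\sqrt2$-type decompositions. Set $H_1=(G+G^*)/2$ and $H_2=(G-G^*)/(2i)$; these are two independent GUE matrices with entry variance $1/(2N)$, so each has semicircle radius $\sqrt2$. Then
\[
X^{\rm e}=\sqrt{1+\tau}\,H_1+i\sqrt{1-\tau}\,H_2,
\qquad
H_\theta = \cos\theta\sqrt{1+\tau}\,H_1+\sin\theta\sqrt{1-\tau}\,H_2 .
\]
A linear combination of independent GUEs is again a GUE, with variance scaled by $(1+\tau)\cos^2\theta+(1-\tau)\sin^2\theta$. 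Its largest eigenvalue therefore converges almost surely to
\[
\sqrt2\,\sqrt{(1+\tau)\cos^2\theta+(1-\tau)\sin^2\theta}=\sqrt{a^2\cos^2\theta+b^2\sin^2\theta},
\]
which gives (i).

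\textbf{Case (ii).} Here
\[
H_\theta=\frac12\begin{bmatrix}0& e^{-i\theta}X_1+e^{i\theta}X_2\\ (\cdot)^*&0\end{bmatrix},
\]
whose largest eigenvalue is the top singular value of $M_\theta:=\tfrac12(e^{-i\theta}X_1+e^{i\theta}X_2)$. Expanding,
\[
M_\theta=\sqrt{1+\tau}\cos\theta\,P - i\sqrt{1-\tau}\sin\theta\,Q .
\]
Since $P$ and $Q$ are independent with i.i.d.\ Gaussian entries of variance $1/(2N)$, $M_\theta$ is an $N\times(N+\nu)$ Gaussian matrix with entry variance $\sigma^2/(2N)$, where $\sigma^2=(1+\tau)\cos^2\theta+(1-\tau)\sin^2\theta$. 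By the Bai–Yin / Marchenko–Pastur edge result (for Gaussian entries, almost sure convergence of the largest singular value), $s_{\max}(M_\theta)\to \sigma\,(1+\sqrt{1+\alpha})/\sqrt2$. This equals $\sqrt{a^2\cos^2\theta+b^2\sin^2\theta}$ with $a,b$ as in \eqref{def of a b chiral}.

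\textbf{Uniformity in $\theta$.} Fix $\epsilon>0$ and take a finite net of angles. A countable intersection of almost sure events gives convergence on all rational angles simultaneously. Combined with the Lipschitz bound from $\sup_N\|A\|<\infty$ almost surely (standard norm bounds for Ginibre and rectangular Gaussian matrices), this gives uniform convergence of support functions. The Hausdorff convergence then follows.

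The main obstacle is only the bookkeeping of variances. Almost sure edge convergence for GUE and Wishart is standard, so once the identification $h_A(\theta)=\lambda_{\max}(H_\theta)$ and the Gaussian-invariance reduction are in place, the rest is routine.
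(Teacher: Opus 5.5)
Your proposal is correct and takes essentially the same route as the paper. The paper likewise represents $W(A)$ through the largest eigenvalue of the rotated Hermitian part, and upgrades pointwise almost sure convergence to Hausdorff convergence by a Lipschitz-plus-finite-net argument (its Lemma~\ref{Lem_uniform convergence} and Proposition~\ref{Prop_elliptic numerical range}). For the pointwise limits it identifies $\re(e^{i\theta}X^{\rm e})$ as a rescaled GUE, and in the chiral case it uses Gaussian invariance to reduce the off-diagonal block to $\sigma P$, with the same variance bookkeeping as yours.
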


\begin{figure}[t]
	\begin{subfigure}{0.24\textwidth}
		\begin{center}	
			\includegraphics[width=\textwidth]{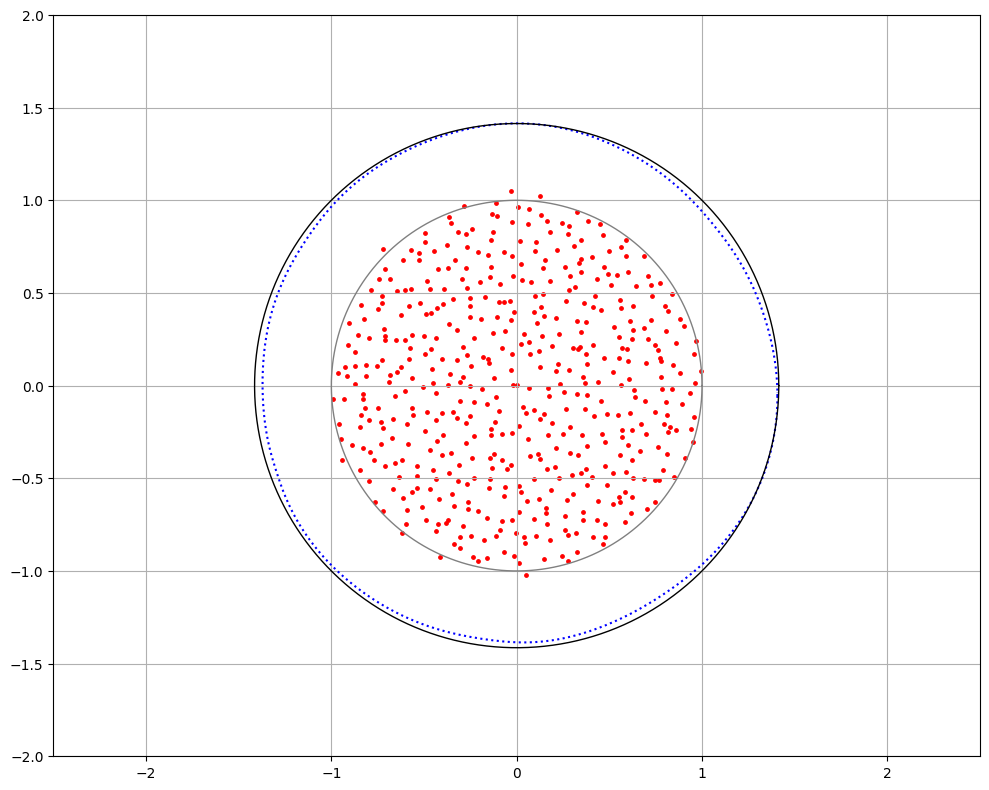}
		\end{center}
		\subcaption{$\tau=0$}
	\end{subfigure}	
		\begin{subfigure}{0.24\textwidth}
		\begin{center}	
			\includegraphics[width=\textwidth]{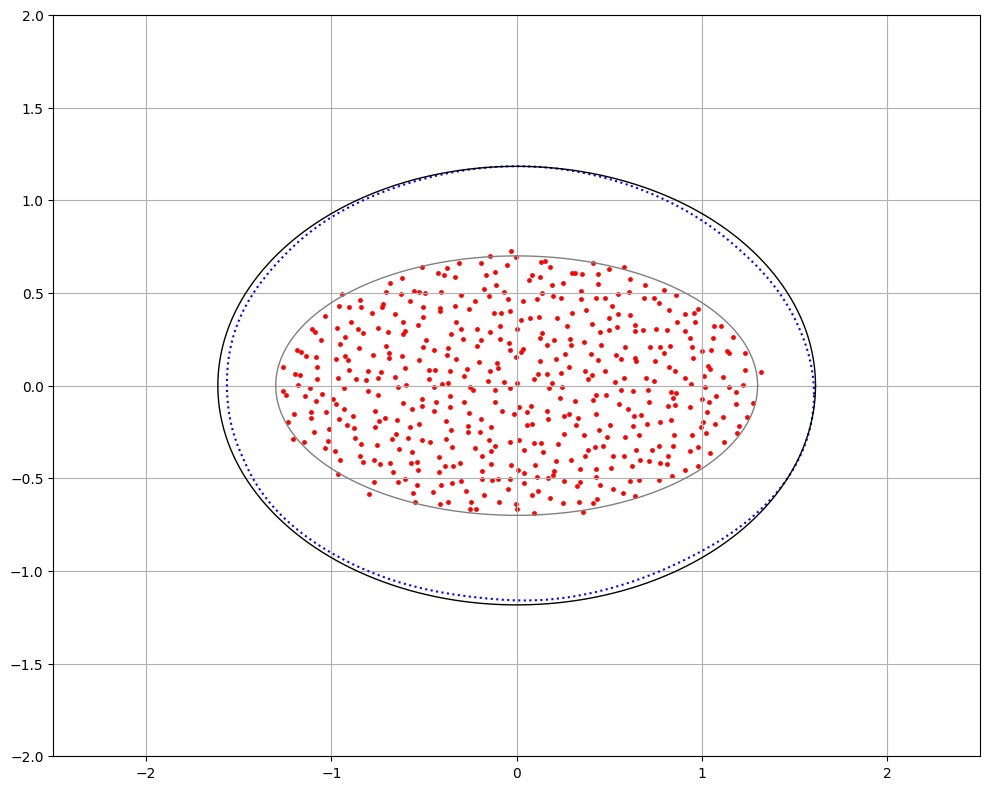}
		\end{center}
		\subcaption{$\tau=0.3$}
	\end{subfigure}
    	\begin{subfigure}{0.24\textwidth}
		\begin{center}	
			\includegraphics[width=\textwidth]{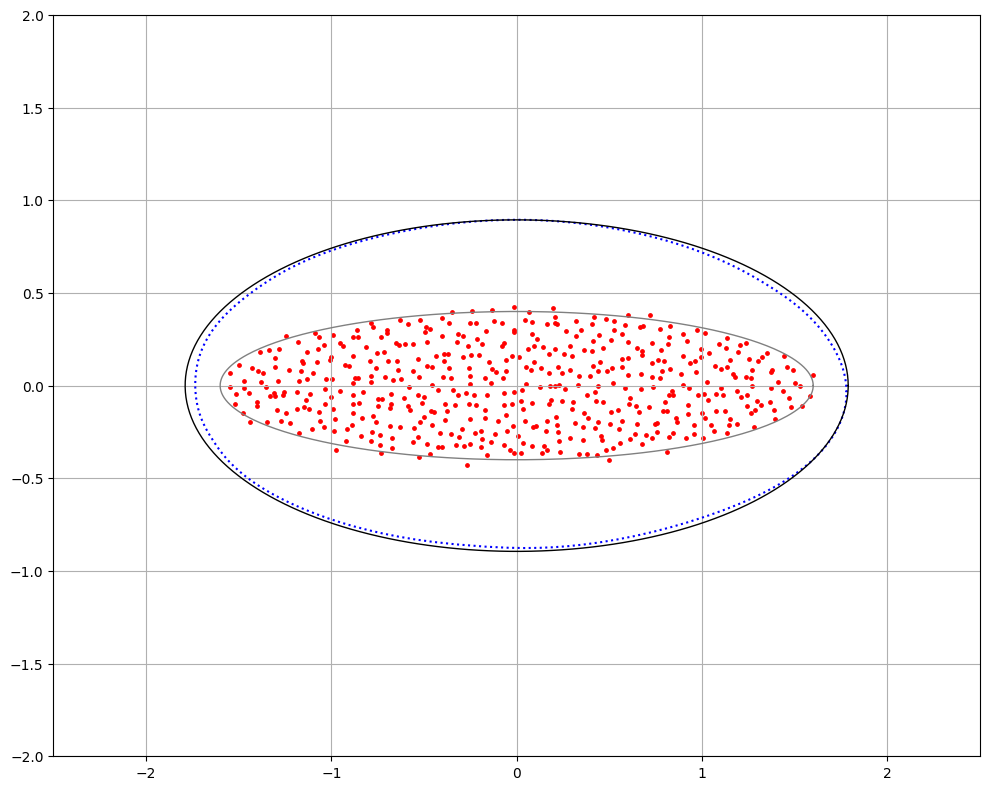}
		\end{center}
		\subcaption{$\tau=0.6$}
	\end{subfigure}
    	\begin{subfigure}{0.24\textwidth}
		\begin{center}	
			\includegraphics[width=\textwidth]{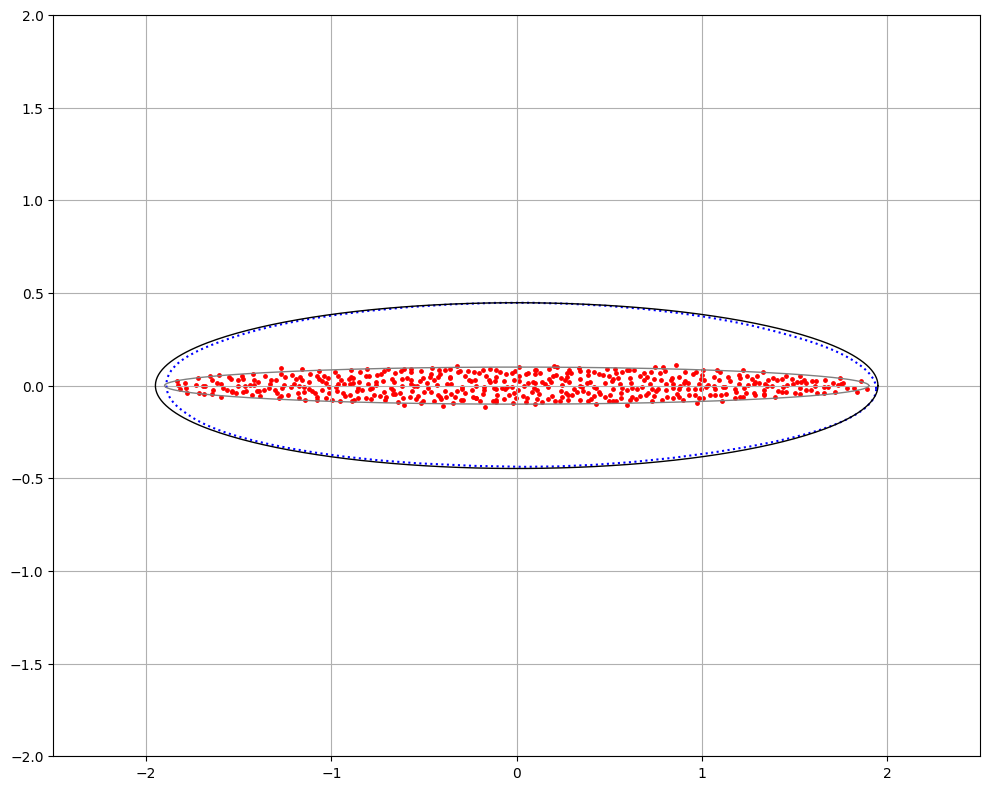}
		\end{center}
		\subcaption{$\tau=0.9$}
	\end{subfigure}

	\begin{subfigure}{0.24\textwidth}
		\begin{center}	
			\includegraphics[width=\textwidth]{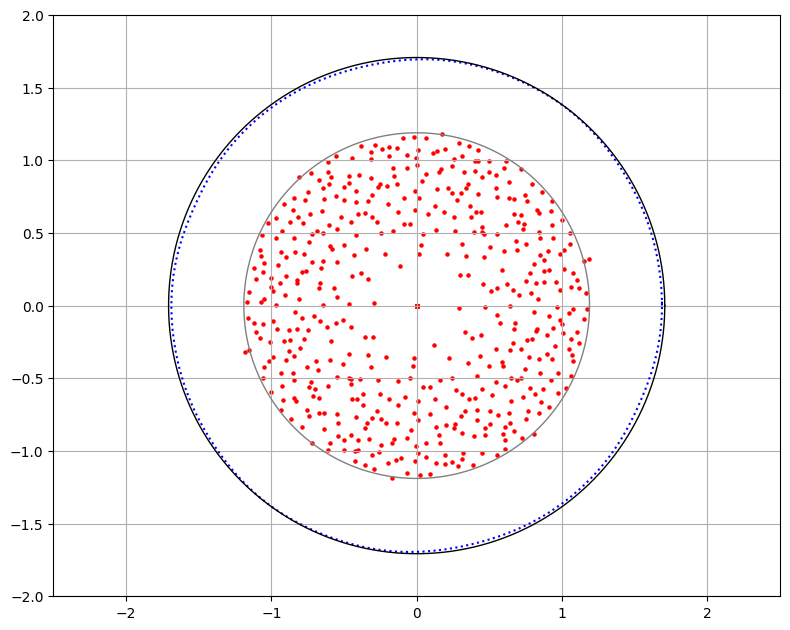}
		\end{center}
		\subcaption{$\alpha=1, \tau=0$}
	\end{subfigure}	
		\begin{subfigure}{0.24\textwidth}
		\begin{center}	
			\includegraphics[width=\textwidth]{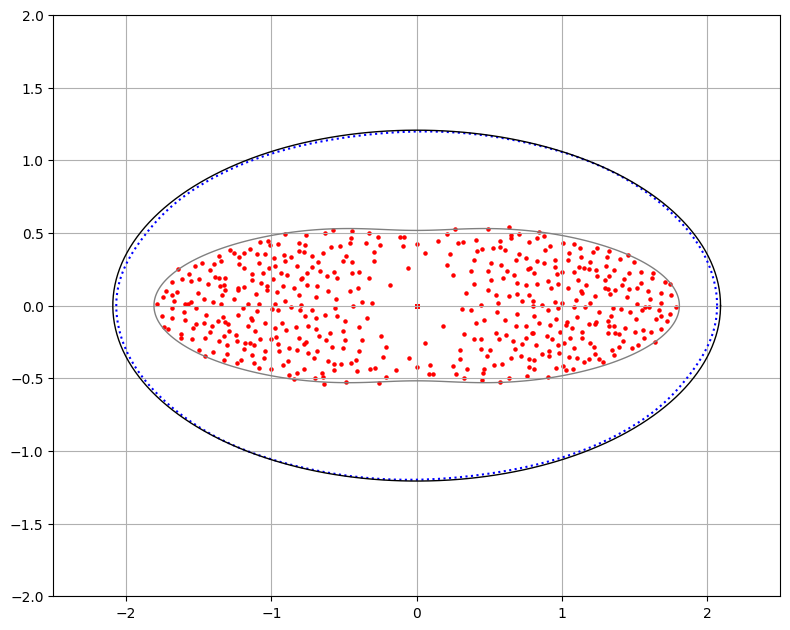}
		\end{center}
		\subcaption{$\alpha=1,\tau=0.5$}
	\end{subfigure}
    	\begin{subfigure}{0.24\textwidth}
		\begin{center}	
			\includegraphics[width=\textwidth]{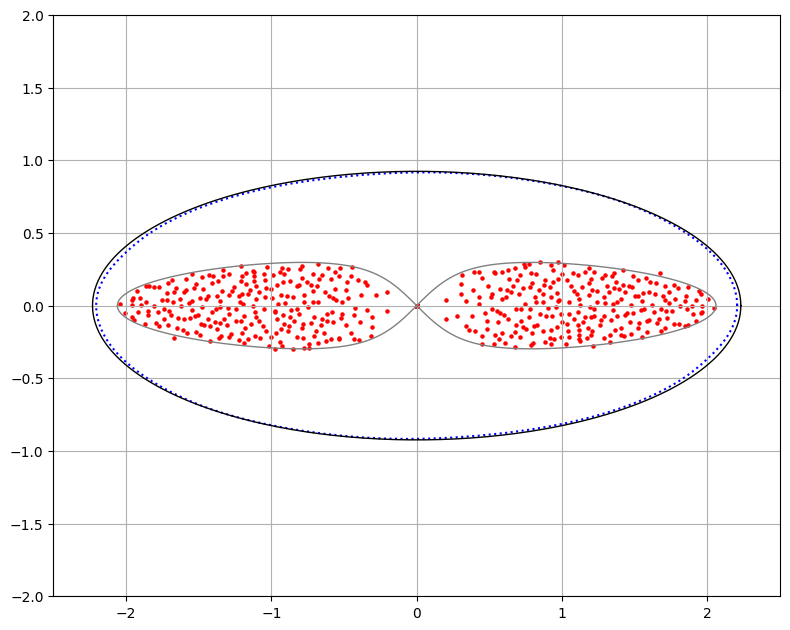}
		\end{center}
		\subcaption{$\alpha=1,\tau=1/\sqrt{2}$}
	\end{subfigure}
    	\begin{subfigure}{0.24\textwidth}
		\begin{center}	
			\includegraphics[width=\textwidth]{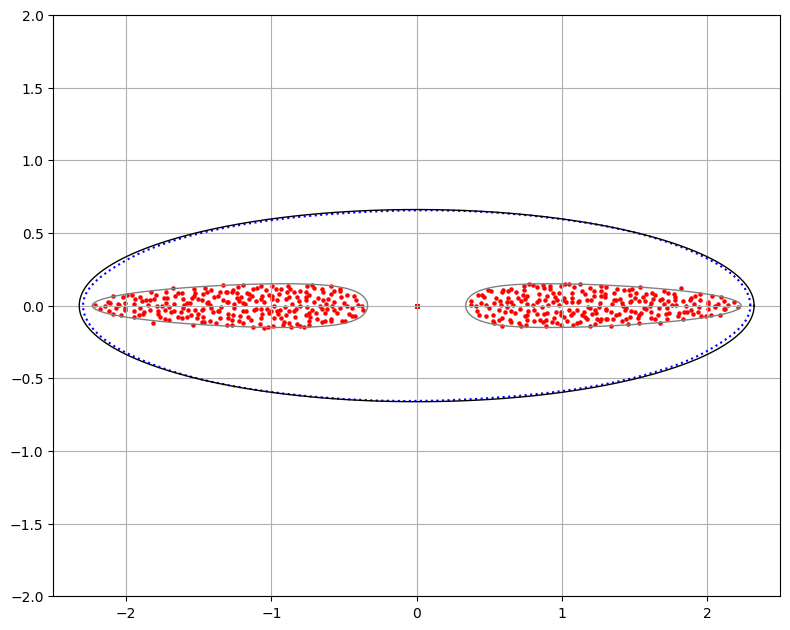}
		\end{center}
		\subcaption{$\alpha=1,\tau=0.85$}
	\end{subfigure}
	\caption{ The plots display the eigenvalues and numerical ranges of the elliptic Ginibre matrix ((A)--(D)) and the chiral elliptic Ginibre matrix ((E)--(H)). The red dots represent the eigenvalues together with the boundary of the droplet defined in \eqref{def of S for eGinUE} and \eqref{def of S for Dirac}, respectively. The solid black curves indicate the theoretical numerical ranges given in Theorem~\ref{Thm_elliptic and chiral}. The blue dotted curves show numerically computed numerical ranges, which are in good agreement with the theoretical results. Here, \(N=500\) for (A)--(D), while \(N=250\) for (E)--(H). }  \label{Fig_elliptic and chiral}
\end{figure}

See Figure~\ref{Fig_elliptic and chiral} for the numerics on Theorem~\ref{Thm_elliptic and chiral}. 
Note that, when $\alpha=0$, the values of $a$ and $b$ in \eqref{def of a b chiral} coincide with those in \eqref{def of a b eGinUE}. 

\begin{rem}[Outer and inner numerical radii]
In Theorem~\ref{Thm_elliptic and chiral}, the major and minor axes $a$ and $b$ are referred to as the outer and inner numerical radii, respectively. 
For the elliptic Ginibre matrix, the values of $a$ and $b$ in 
\eqref{def of a b eGinUE} were identified in \cite[Theorem~1.4]{BC25}, where the sharp error estimate and the corresponding fluctuation behaviour were also established. In contrast to the numerical radius, the statistics of extremal eigenvalues have been extensively studied. We refer to 
\cite{CESX22,CESX23,Be10,CJQ20,Ch22,HM25,XZ25} and the references therein 
for such results concerning various non-Hermitian random matrix ensembles.
\end{rem}

\begin{rem}[Extension beyond the Gaussian setting] \label{Rem_non Gaussian} In Theorem~\ref{Thm_elliptic and chiral}, the models are formulated using matrices with Gaussian entries. However, the argument extends without difficulty to ensembles with more general entries. The essential input in the proof is the almost sure convergence of the largest eigenvalue of the Hermitian part of the matrix.
In the case of the elliptic Ginibre ensemble, this Hermitian part belongs to the GUE, for which the convergence of the largest eigenvalue is classical. More generally, the same convergence holds for Wigner matrices under standard moment assumptions (see e.g. \cite{BSY88,BY93}). Consequently, Theorem~\ref{Thm_elliptic and chiral} remains valid beyond the Gaussian setting, aligning with the universal appearance of the elliptic law \cite{NO15,AK22}. 
\end{rem}

We now present our result for the non-Hermitian Wishart ensemble. Unlike the previous models, the resulting geometry is no longer simply described by an ellipse. To characterise it, we introduce a family of quartic polynomials parametrised by an angular variable. For $\theta \in [0,2\pi)$, set $\mathsf{c}:=\cos \theta$ and define 
\begin{equation}  \label{quartic polynomial of x among theta}
D_\theta(x):=a_4 x^4+a_3 x^3+a_2 x^2+a_1 x+a_0,
\end{equation}
where 
\begin{align}
\begin{split} 
a_4 & = 16\Big(1-\tau^2+\mathsf{c}^2\tau^2\Big),
\qquad a_3 =-32\,\mathsf{c}\tau(\alpha+2)\Big(1-\tau^2+\mathsf{c}^2\tau^2\Big),
\\
a_2 &= 16 \alpha^2\mathsf{c}^4\tau^4 + 4(\alpha^2-8\alpha-11) (1-\tau^2)^2 + 8\mathsf{c}^2\tau^2 ( 2 \alpha^2 -5 \alpha -6) (1-\tau^2),  
\\
a_1&= 4\,\mathsf{c}\tau(1-\tau^2) \Big( 2\alpha^2 \mathsf{c}^2 \tau^2 -(1-\tau^2)(2\alpha^3+5\alpha^2+8\alpha+3)  \Big) , 
\\
a_0 &=(2\alpha+1)^2(1-\tau^2)^2
\Big(\alpha^2\mathsf{c}^2\tau^2-(2\alpha+1)(1-\tau^2)\Big).
\end{split}
\end{align} 
The equation $D_\theta(x)=0$ has exactly two distinct real roots; see Lemma~\ref{Lem_real root count} below. 
We denote by $\lambda(\theta)$ the larger of these two roots.
We write $H_{\theta} := e^{-i \theta} \{ z \in \mathbb{C} : \re z \le \lambda(\theta) \}$, and define
\begin{equation} \label{def of tilde E}
    \widetilde{E} (\tau,\alpha) := \bigcap_{0 \le \theta \le 2 \pi} H_{\theta}.
\end{equation}
See Figure~\ref{Fig_non ellipse} for an illustration of this domain. 

\begin{thm}[\textbf{Numerical range of non-Hermitian Wishart matrix}] \label{Thm_NWishart} Let $\tau \in [0,1]$. Let $X^{ \rm w }$ be the non-Hermitian Wishart matrix of size $N$. Assume that $\nu$ scales proportionally to $N$ as specified in \eqref{def of scaling nu}. Then we have 
\begin{equation}
   \lim_{N \to \infty} d_H(W(X^{ \rm w }), \widetilde{E}(\tau,\alpha) ) = 0, 
\end{equation}
almost surely, where $\widetilde{E} (\tau,\alpha)$ is given by \eqref{def of tilde E}.
\end{thm}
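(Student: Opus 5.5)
The plan rests on the support-function description of the numerical range. For any $A$ the set $W(A)$ is compact and convex. For every $\theta$,
\[
\max_{z\in W(A)} \re(e^{i\theta}z)=\lambda_{\max}\big(\mathrm{Re}(e^{i\theta}A)\big), \qquad \mathrm{Re}(B):=\tfrac12(B+B^*),
\]
so $W(A)=\bigcap_\theta e^{-i\theta}\{\re z\le \lambda_{\max}(\mathrm{Re}(e^{i\theta}A))\}$. Comparing with \eqref{def of tilde E}, it suffices to prove two things:
\begin{itemize}
\item[(a)] for each fixed $\theta$, $\lambda_{\max}(\mathrm{Re}(e^{i\theta}X^{\rm w}))\to\lambda(\theta)$ almost surely;
\item[(b)] the convergence in (a) can be upgraded to be uniform in $\theta$, which turns convergence of support functions into Hausdorff convergence.
\end{itemize}
For (b), the support functions $h_N(\theta)$ are Lipschitz in $\theta$ with constant $\|X^{\rm w}\|$, and $\|X^{\rm w}\|\le \|X_1\|\|X_2\|$ is almost surely bounded by the classical Bai--Yin type bounds for rectangular Gaussian matrices. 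Pointwise convergence on a countable dense set of $\theta$ then gives uniform convergence. The Hausdorff distance between two compact convex sets equals the sup-norm distance of their support functions, which concludes. The limit $\lambda(\theta)$ is automatically continuous, being a limit of equi-Lipschitz functions.

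Step (a) is the core. Write $M_\theta:=\frac12(e^{i\theta}X_1X_2^*+e^{-i\theta}X_2X_1^*)$. This is a Hermitian matrix that is a quadratic form in the Gaussian $N\times(N+\nu)$ matrix $[X_1;X_2]$. Indeed,
\[
M_\theta = Z\,\Sigma_\theta\, Z^*, \qquad Z:=\begin{bmatrix}X_1 & X_2\end{bmatrix}\ \text{(side by side)}, \qquad \Sigma_\theta=\frac12\begin{bmatrix}0& e^{i\theta}\\ e^{-i\theta}&0\end{bmatrix}\otimes I_{N+\nu}.
\]
More usefully, one rewrites $M_\theta$ in terms of $P$ and $Q$ with i.i.d. entries. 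Expanding \eqref{def of X1 X2} gives
\[
M_\theta=(1+\tau)\cos\theta\,PP^*-(1-\tau)\cos\theta\,QQ^*+\sqrt{1-\tau^2}\,\big(i\sin\theta\,(QP^*-PQ^*)\big).
\]
Equivalently, $M_\theta=W T_\theta W^*$, where $W=[P\ Q]$ is an $N\times 2(N+\nu)$ matrix with i.i.d. entries and $T_\theta$ is a fixed Hermitian $2\times2$ block matrix tensored with the identity. Diagonalising $T_\theta$ shows that $M_\theta$ has the law of $\mu_+ \tilde P\tilde P^*+\mu_-\tilde Q\tilde Q^*$, a difference of two independent Wishart matrices with weights $\mu_\pm$, the eigenvalues of $T_\theta$. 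These weights have opposite signs, and their values depend on $\tau$ and $\mathsf{c}=\cos\theta$ only; this is consistent with the coefficients of $D_\theta$ depending only on $\mathsf c$. The limiting spectral measure is the free additive convolution of two (scaled, reflected) Marchenko--Pastur laws with ratio $1+\alpha$. Its right edge can be found from the subordination equations, or directly from the Silverstein-type equation for $W T W^*$:
\[
z=-\frac1m+(1+\alpha)\sum_\pm \frac{\mu_\pm}{1+\mu_\pm m}.
\]
The edge is the critical value where $dz/dm=0$. Eliminating $m$ between $z(m)$ and $z'(m)=0$ yields a polynomial discriminant in $z$. I expect this to reproduce $D_\theta(x)$ up to a constant, with $\lambda(\theta)$ its largest real root; this is where Lemma~\ref{Lem_real root count} enters.

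Finally, one needs that the largest eigenvalue converges almost surely to the edge of the support, with no outliers. For $WTW^*$ with $T$ having a fixed spectrum and Gaussian $W$, this follows from the Bai--Silverstein ``no eigenvalues outside the support'' theorem together with the exact separation of eigenvalues. Alternatively, it follows from Gaussian concentration of $\lambda_{\max}$ combined with convergence of the empirical measure and a norm bound.

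The main obstacle is the algebra of the edge computation: verifying that the elimination gives exactly the stated quartic $D_\theta$, and identifying which real root is the right edge. I would handle this with computer algebra. As sanity checks, I would use $\tau=1$, where one should recover $\cos\theta$ times the LUE edge $(1+\sqrt{1+\alpha})^2$ for $\mathsf c>0$, and $\theta=\pi/2$.
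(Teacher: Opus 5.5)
Your route is essentially the paper's. It uses the same factorisation $\re(e^{i\theta}X^{\rm w})=[P\ Q]\,(T(\theta)\otimes I_M)\,[P\ Q]^*$ and diagonalises $T(\theta)$, whose eigenvalues are $\tau\cos\theta\pm\sqrt{1-\tau^2\sin^2\theta}$. Gaussian unitary invariance then gives $\mu_+\tilde P\tilde P^*+\mu_-\tilde Q\tilde Q^*$. The right edge of the free convolution of two Marchenko--Pastur laws comes from a discriminant, and the Lipschitz/net upgrade is that of Lemma~\ref{Lem_uniform convergence}. Up to the normalisation issue below, your Silverstein-type equation with $m=-G$ is the paper's relation $z=R_{\mu_\theta}(G)+R_{\nu_\theta}(G)+1/G$. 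Eliminating $m$ via $z'(m)=0$ is the same as the vanishing of the discriminant of the cubic in $G$. Two points need fixing.

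\emph{Normalisation.} The entries of $P,Q$ have variance $1/(2N)$. Hence $\mu_\pm\tilde P\tilde P^*$ is free Poisson with rate $1+\alpha$ and jump size $\mu_\pm/2$; compare the paper's $R_{\mu_\theta}(z)=(1+\alpha)\lambda_+/(2-\lambda_+z)$. The equation must therefore read
\[
z=-\frac1m+(1+\alpha)\sum_\pm\frac{\mu_\pm/2}{1+\mu_\pm m/2}.
\]
The version you wrote describes $2M_\theta$, so its edge equation is $D_\theta(x/2)=0$ up to a constant. Your own $\tau=1$ check would expose this: it returns $2\cos\theta\,(1+\sqrt{1+\alpha})^2$ instead of $\cos\theta\,(1+\sqrt{1+\alpha})^2$.

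\emph{No outliers.} The Bai--Silverstein ``no eigenvalues outside the support'' theorem is formulated for $\frac1N T^{1/2}XX^*T^{1/2}$ with $T$ nonnegative definite. Here $\mu_+\mu_-=-(1-\tau^2)<0$, so the theorem does not apply as cited. Your fallback is not sufficient either. Gaussian concentration, weak convergence of the empirical measure and an $O(1)$ norm bound only give $\liminf_N\lambda_{\max}\ge$ right edge. Nothing in these ingredients rules out $\lambda_{\max}$ concentrating at a point strictly beyond the edge, and this upper bound is exactly the nontrivial input. The paper supplies it through strong asymptotic freeness of independent Wishart matrices (Capitaine--Donati-Martin \cite{CD07}). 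Applied to $\mu_+\tilde P\tilde P^*+\mu_-\tilde Q\tilde Q^*+s$ for a large constant $s$, this yields convergence of $\lambda_{\max}$ to the right edge of $\mu_\theta\boxplus\nu_\theta$. You should invoke that result, or an equivalent strong-convergence theorem, at this step.
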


See Figures~\ref{Fig_NWishart} and ~\ref{Fig_non ellipse} for the numerics on Theorem~\ref{Thm_NWishart}. 
The proof of Theorem~\ref{Thm_NWishart} proceeds via a structural reformulation of the Hermitian part $\re(e^{i\theta} X^{\mathrm w})$. Exploiting a spectral decomposition together with the Gaussian invariance of the ensemble, we reduce the problem to two Wishart-type components.
This approach contrasts with the elliptic Ginibre case in Remark~\ref{Rem_non Gaussian}, as it depends essentially on the Gaussian nature of the entries.

\begin{figure}[t]
	\begin{subfigure}{0.24\textwidth}
		\begin{center}	
			\includegraphics[width=\textwidth]{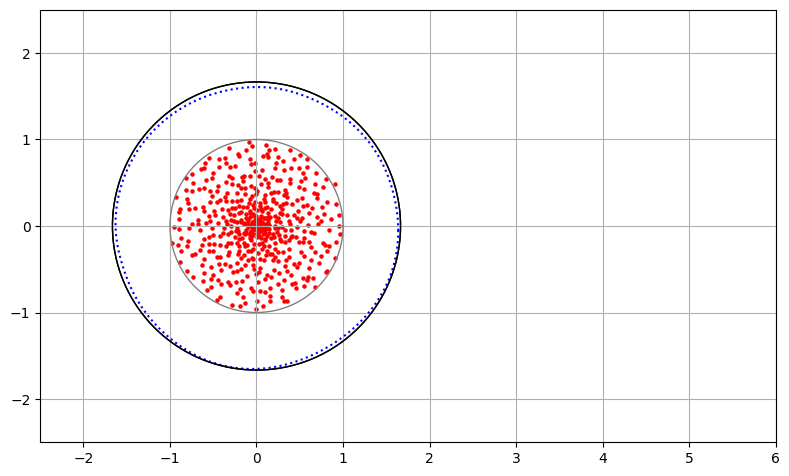}
		\end{center}
		\subcaption{$\alpha=0, \tau=0$}
	\end{subfigure}	
		\begin{subfigure}{0.24\textwidth}
		\begin{center}	
			\includegraphics[width=\textwidth]{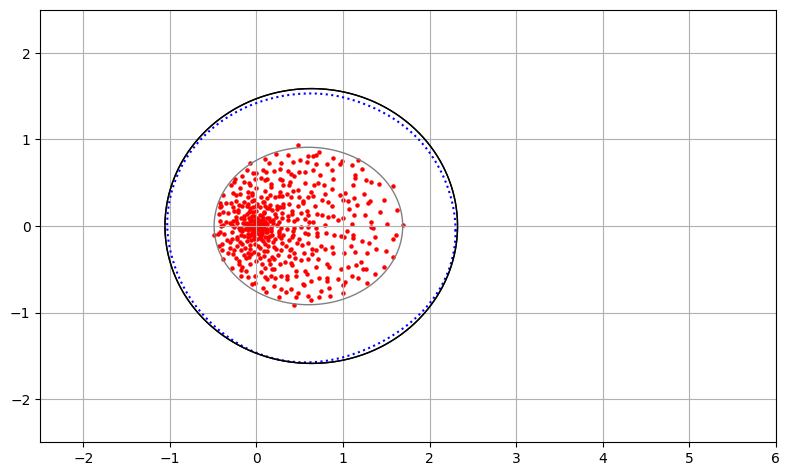}
		\end{center}
		\subcaption{$\alpha=0, \tau=0.3$}
	\end{subfigure}
    	\begin{subfigure}{0.24\textwidth}
		\begin{center}	
			\includegraphics[width=\textwidth]{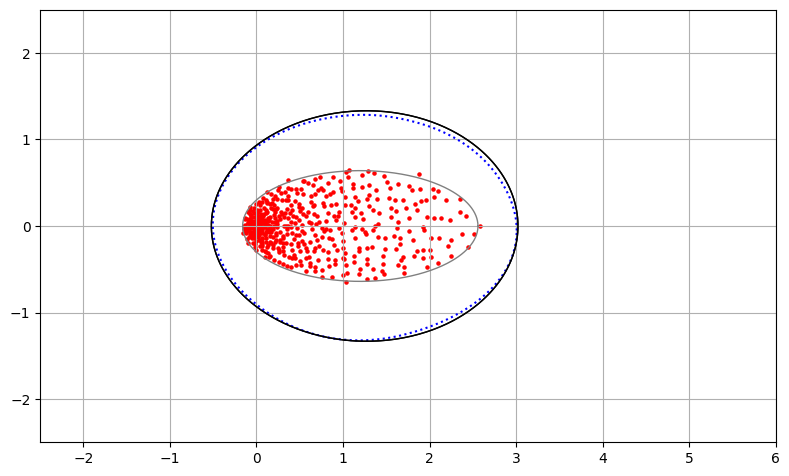}
		\end{center}
		\subcaption{$\alpha=0, \tau=0.6$}
	\end{subfigure}
    	\begin{subfigure}{0.24\textwidth}
		\begin{center}	
			\includegraphics[width=\textwidth]{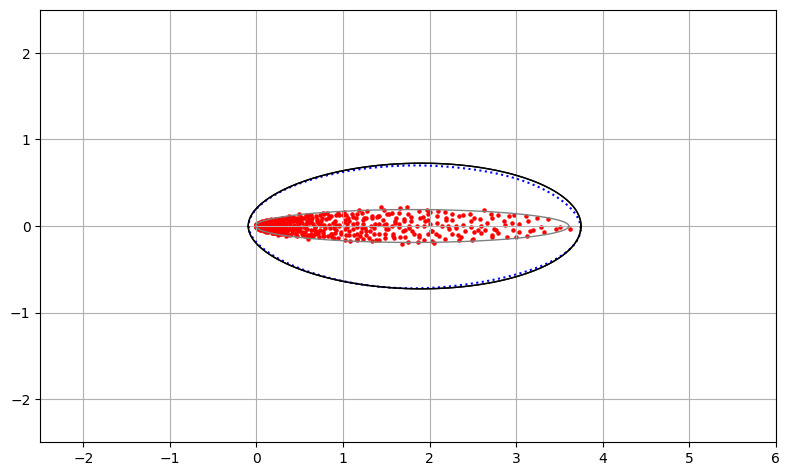}
		\end{center}
		\subcaption{$\alpha=0, \tau=0.9$}
	\end{subfigure}

	\begin{subfigure}{0.24\textwidth}
		\begin{center}	
			\includegraphics[width=\textwidth]{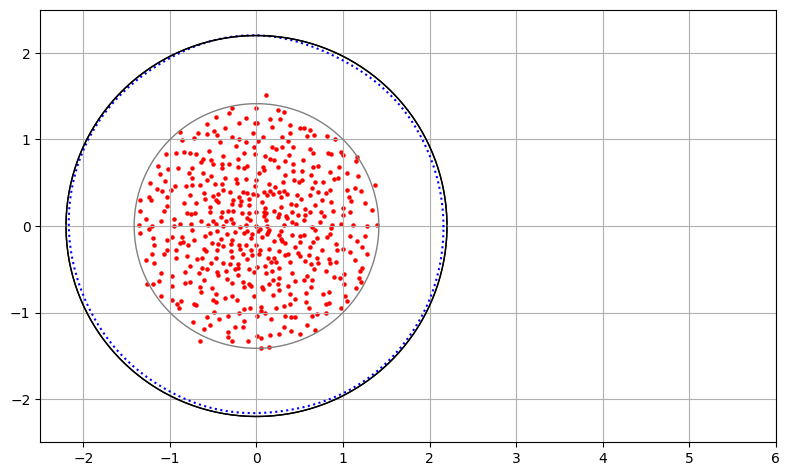}
		\end{center}
		\subcaption{$\alpha=1, \tau=0$}
	\end{subfigure}	
		\begin{subfigure}{0.24\textwidth}
		\begin{center}	
			\includegraphics[width=\textwidth]{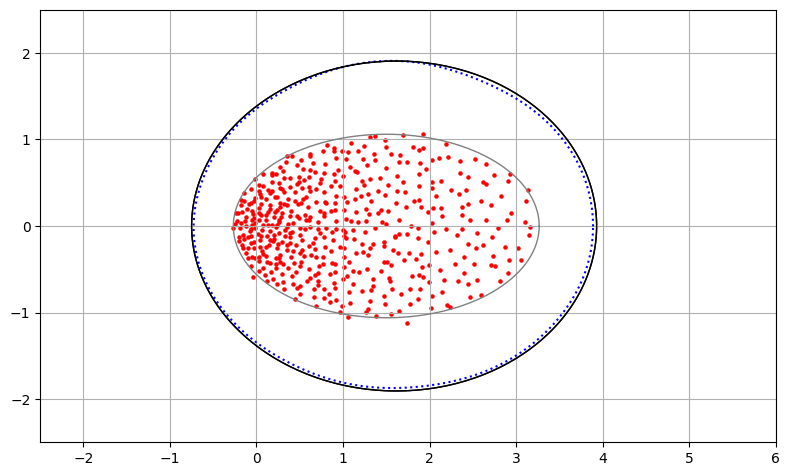}
		\end{center}
		\subcaption{$\alpha=1, \tau=0.5$}
	\end{subfigure}
    	\begin{subfigure}{0.24\textwidth}
		\begin{center}	
					\includegraphics[width=\textwidth]{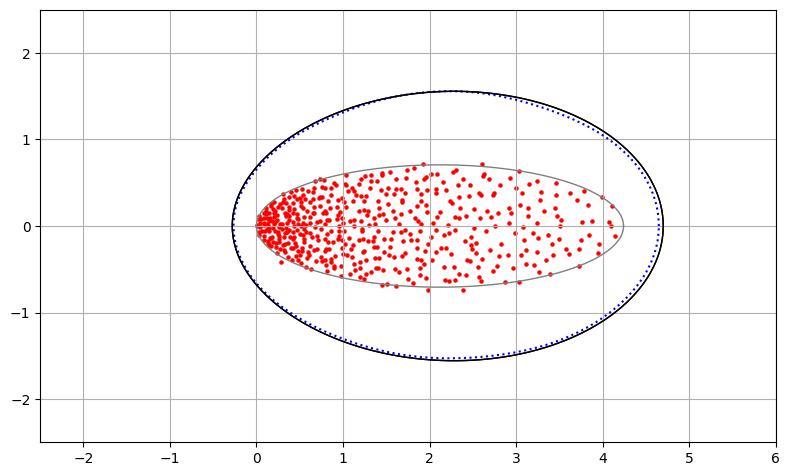}
		\end{center}
		\subcaption{$\alpha=1,\tau=1/\sqrt{2}$}
	\end{subfigure}
    	\begin{subfigure}{0.24\textwidth}
		\begin{center}	
					\includegraphics[width=\textwidth]{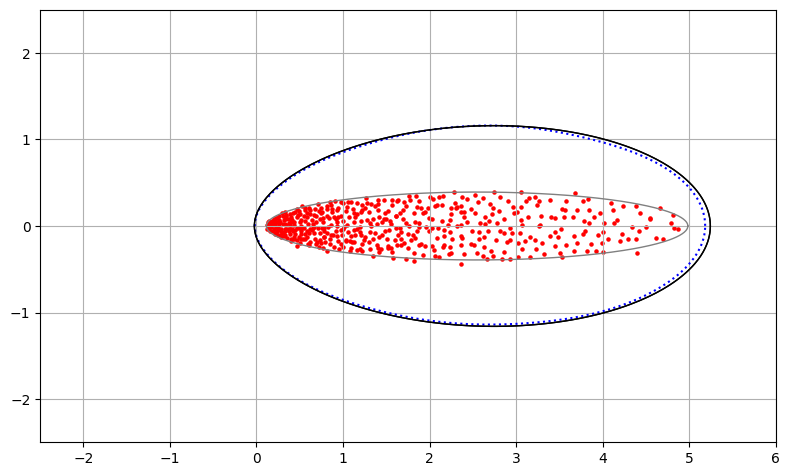}
		\end{center}
		\subcaption{$\alpha=1,\tau=0.85$}
	\end{subfigure}
	\caption{ The same figure as in Figure~\ref{Fig_elliptic and chiral} for the non-Hermitian Wishart ensemble. The solid black curve indicates the theoretical numerical range given in Theorem~\ref{Thm_NWishart}. Here, \(N=500\). }  \label{Fig_NWishart}
\end{figure}

\begin{rem}[Maximally non-Hermitian case; products of two rectangular Ginibre matrices] \label{Rem_products of rectangular}
In the special case $\tau=0$, the polynomial \eqref{quartic polynomial of x among theta} simplifies to
\begin{equation}
D_\theta(x)|_{\tau=0} = 16x^4+4(\alpha^2-8\alpha-11)x^2-(2\alpha+1)^3. 
\end{equation}
In this case, the expression is independent of the angular parameter $\theta$. 
The real roots of $D_\theta(x)|_{\tau=0}$ are given by $\pm B$, where
\begin{equation} \label{def of B}
B:= \sqrt{\frac{-\alpha^2+8 \alpha + 11 + (\alpha+5)\sqrt{\alpha^2+6 \alpha + 5}}{8}}. 
\end{equation} 
Consequently, it follows that  
\begin{equation}
\widetilde{E}(0,\alpha)= \mathbb{D}(B).   
\end{equation}
where  $\mathbb{D}(r)=\{z \in \C: |z| \le r \}$. 
\end{rem}

\begin{rem}[Hermitian limits]
In our models, taking the limit $\tau \to 1$ yields a Hermitian matrix ensemble. 
In this regime, the numerical range is expected to coincide with the convex hull of the eigenvalue spectrum, a phenomenon that can be directly verified from our  explicit results.

For the elliptic Ginibre ensemble, the spectral droplet defined in \eqref{def of S for eGinUE} collapses to the interval $[-2,2]$ as $\tau \to 1$. This interval is precisely the support of the semicircle law of the GUE. 
Consistently, from \eqref{def of a b eGinUE} we have $a(1)=2$, in agreement with this limiting behaviour.

For the chiral elliptic Ginibre ensemble, the spectral droplet defined in
\eqref{def of S for Dirac} converges, as $\tau \to 1$, to the union of two
disjoint intervals
\begin{equation} 
[-\sqrt{\lambda_+},\sqrt{\lambda_-}] \cup [ \sqrt{ \lambda_- }, \sqrt{ \lambda_+ } ], \qquad \lambda_\pm := (\sqrt{\alpha+1}\pm1)^2. 
\end{equation}
This set coincides with the support of the limiting spectral distribution of
the chiral GUE.
On the other hand, it follows from \eqref{def of a b chiral} that, in the same limit, the numerical range becomes the single interval $[-\sqrt{\lambda_+},\sqrt{\lambda_+} ]$, which is precisely the convex hull of the above two-cut spectral support.

For the non-Hermitian Wishart ensemble, as $\tau \to 1$, the spectral droplet
collapses to the interval $[\lambda_-,\lambda_+]$, which coincides with the
support of the Marchenko--Pastur law of the LUE.
To analyse the Hermitian limit of the numerical range, we observe that the
polynomial in \eqref{quartic polynomial of x among theta} simplifies to
$D(x)|_{\tau=1} = 16x^2(\alpha - x)^2 - 64x^3$. Its real roots are given by $x=0$ and $x=\lambda_\pm$. Consequently, the numerical range in the Hermitian limit is the interval $[\lambda_-,\lambda_+]$, in agreement with the spectral support.
\end{rem}

\begin{rem}[Geometry of numerical range of non-Hermitian Wishart matrix] \label{Rem_non ellipse}
In contrast to the elliptic and chiral elliptic Ginibre matrices considered in Theorem~\ref{Thm_elliptic and chiral} whose numerical ranges are given by ellipses, the numerical range of the non-Hermitian Wishart matrix in Theorem~\ref{Thm_NWishart} exhibits a markedly different behaviour.
Although it is a convex subset of the complex plane and bears a superficial resemblance to an ellipse, it is in fact not an ellipse; see Appendix~\ref{Appendix_non ellipse} for more discussions. 
\end{rem}

\begin{rem}[Numerical ranges of products and powers of complex Ginibre matrices] \label{Rem_products powers in main} As already noted in Remark~\ref{Rem_products of rectangular}, the non-Hermitian
Wishart matrix is closely related to products of Ginibre matrices; see
\cite[Section~2.7]{BF25}. In particular, when $\nu=0$, the non-Hermitian Wishart matrix coincides with the product of two independent square Ginibre matrices.
In this case, specialising \eqref{def of B} to $\alpha=0$, the numerical range is given by a centred disc of radius
\begin{equation} \label{def of numerical radius for 2 products}
\sqrt{ \frac{11+ 5\sqrt{5} }{ 8 }  } \asymp 1.665. 
\end{equation} 

It is instructive to compare this situation with the case of powers of a Ginibre matrix.
While the special case of the non-Hermitian Wishart matrix corresponds to
a product $Y_1 Y_2$ of two independent Ginibre matrices, it is fundamentally
different from a power of a single Ginibre matrix, such as $Y_1^2$.
Nevertheless, their limiting global eigenvalue distributions coincide \cite{AB12,LW16}.
Numerical simulations presented in Figure~\ref{Fig_products} (A)
suggest that their numerical ranges also coincide in the large-$N$ limit.
This phenomenon can indeed be established by adapting the same strategy
used in the proof of Theorem~\ref{Thm_NWishart}. 
We further observe that this agreement persists more generally:
the numerical ranges of products and powers of Ginibre matrices coincide
asymptotically, provided that the total number of Ginibre factors--counted with multiplicity--is the same; cf. Proposition~\ref{Prop_index invariance for products of GinUEs}. 
See Figure~\ref{Fig_products} (B)--(D) for the case of three Ginibre factors.

We also briefly comment on the value of the numerical radius in \eqref{def of numerical radius for 2 products}. In general, several inequalities are known for the numerical radius of a matrix. A classical upper bound, which follows from the submultiplicativity of the numerical radius, states that for any square matrix $X$, the numerical radius $r(X)$ satisfies 
\begin{equation} \label{upper bdd of powers of radius}
r(X^2) \le r(X)^2. 
\end{equation}
Since the value in \eqref{def of numerical radius for 2 products} is strictly smaller than $2$ (recalling that the numerical radius of the Ginibre matrix is $\sqrt{2}$ due to \cite{CGLZ14}), the bound \eqref{upper bdd of powers of radius} is not sharp for powers of Ginibre matrices.
\end{rem}

\begin{figure}[t]
	\begin{subfigure}{0.24\textwidth}
		\begin{center}	
			\includegraphics[width=\textwidth]{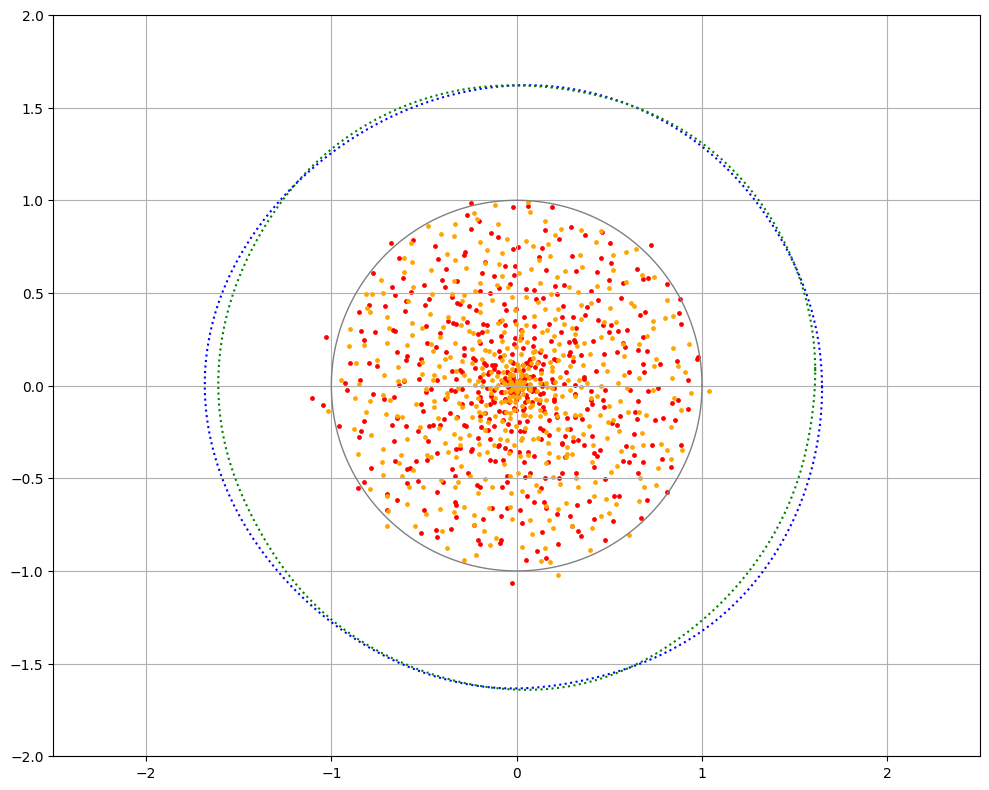}
		\end{center}
		\subcaption{$Y_1Y_2,\,Y_1^2$}
	\end{subfigure}	
		\begin{subfigure}{0.24\textwidth}
		\begin{center}	
			\includegraphics[width=\textwidth]{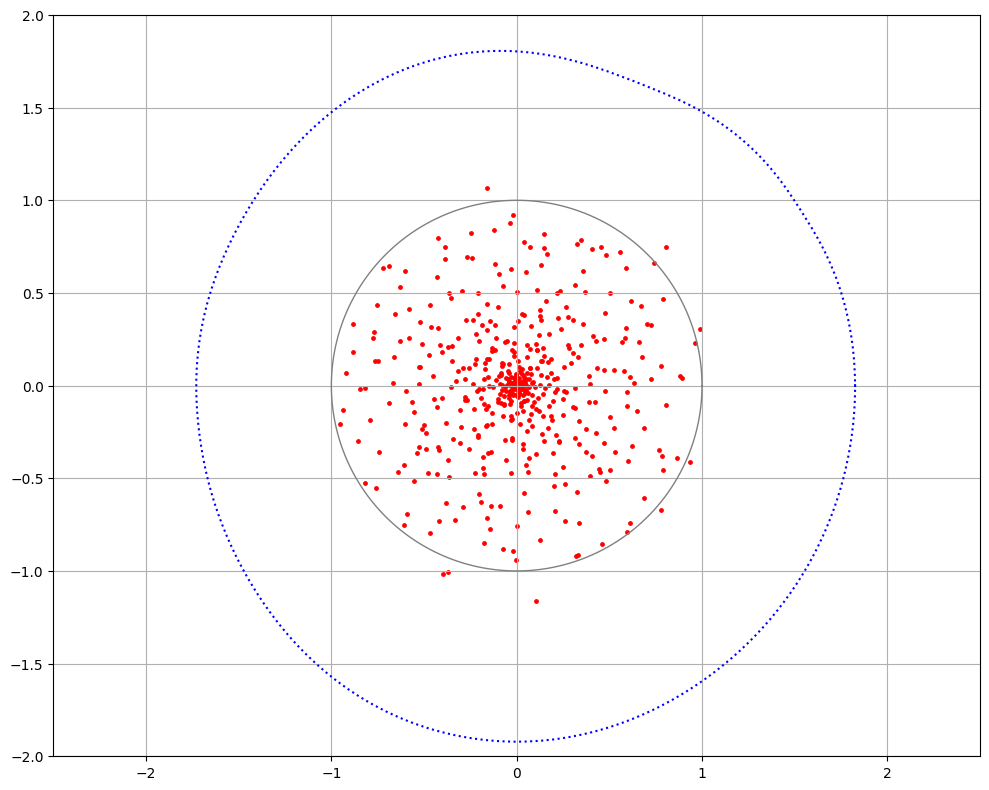}
		\end{center}
		\subcaption{$Y_1^3$}
	\end{subfigure}
    	\begin{subfigure}{0.24\textwidth}
		\begin{center}	
			\includegraphics[width=\textwidth]{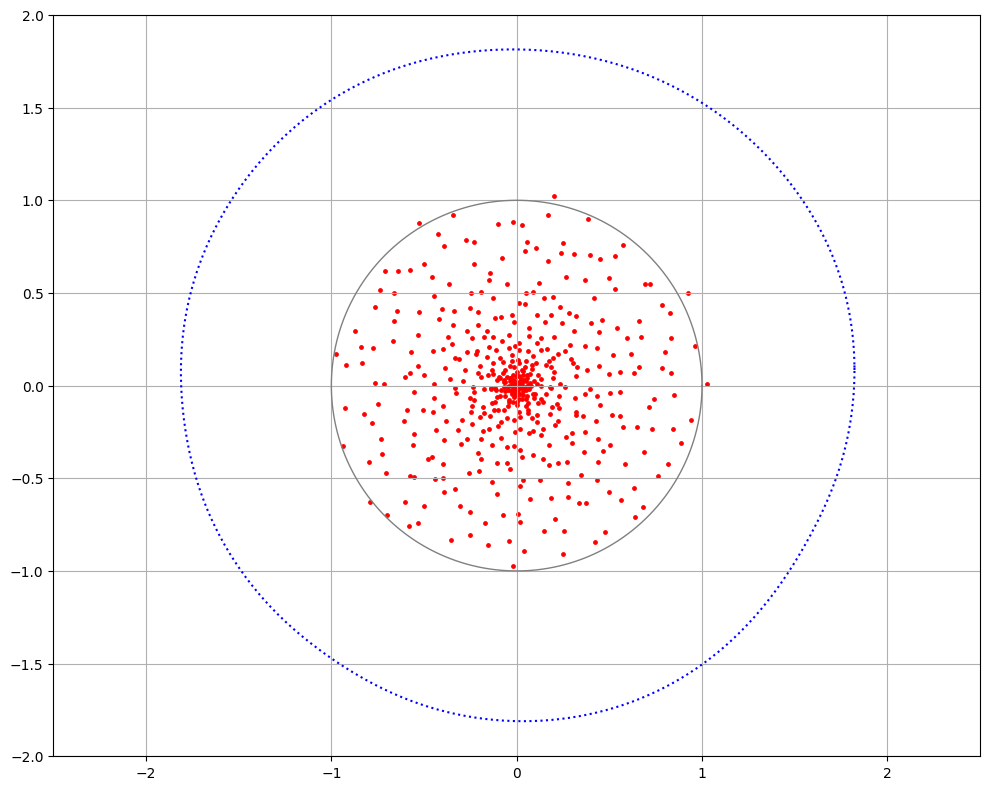}
		\end{center}
		\subcaption{$Y_1^2Y_2$}
	\end{subfigure}
    	\begin{subfigure}{0.24\textwidth}
		\begin{center}	
			\includegraphics[width=\textwidth]{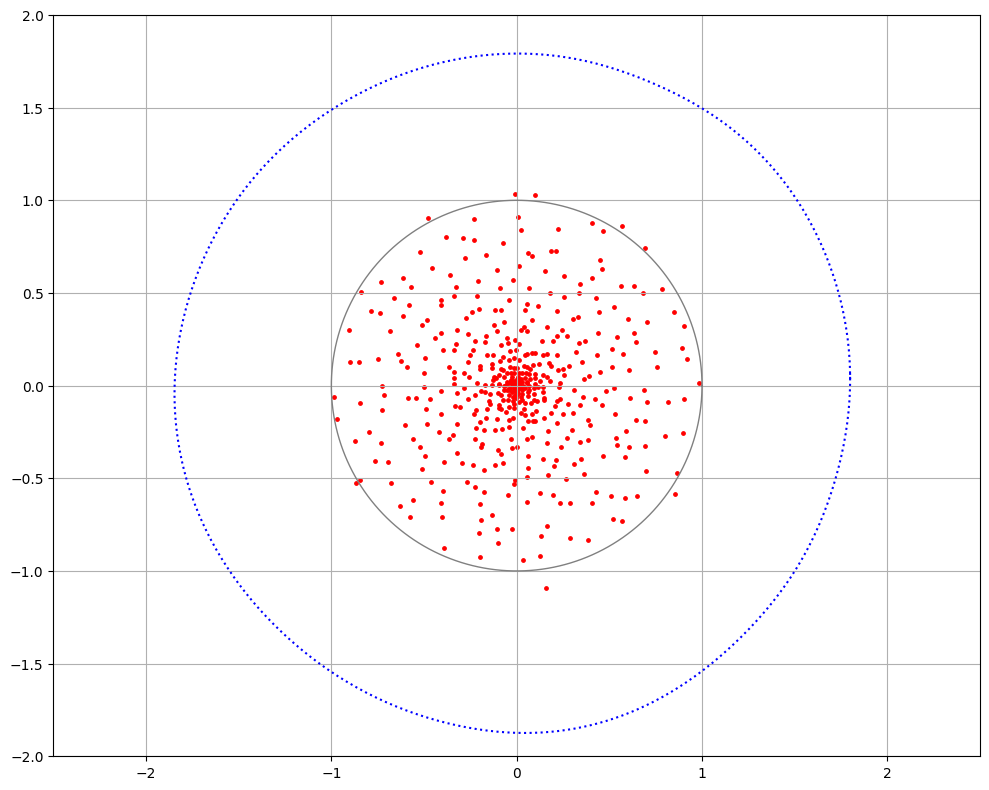}
		\end{center}
		\subcaption{$Y_1Y_2Y_3$}
	\end{subfigure}
	\caption{ The plots display the eigenvalues and simulated numerical ranges for products and powers of Ginibre matrices $Y_k$, where the matrices are normalised so that the associated droplet is the unit disc. In (A), we compare the product of two independent Ginibre matrices with the square of a single Ginibre matrix; in both cases, the numerical radius coincides with the value in \eqref{def of numerical radius for 2 products}. Figures~(B)--(D) show numerical ranges for various combinations of products and powers involving three Ginibre matrices. In all cases, the limiting numerical radius appears to be identical. Here, $N=500$.
 }  \label{Fig_products}
\end{figure}

Extending the previous remark, we now consider the product model of elliptic Ginibre matrices. 
Let $n \ge 2$ and let $X^{\rm e}_{1}, X^{\rm e}_{2}, \dots, X^{\rm e}_{n}$ be $N \times N$ i.i.d.\ elliptic Ginibre matrices. 
We define the product ensemble
\begin{equation} \label{def of product eGinUE}
\mathbf{X}_n^{ \rm e } := X_1^{ \rm e } X_2^{ \rm e } \dots X_n^{ \rm e }. 
\end{equation}
A remarkable fact shown in \cite{ORSV14} is that for $n \ge 2$, the limiting eigenvalue distribution of the product model \eqref{def of product eGinUE} does not depend on the non-Hermiticity parameter $\tau$. 
In particular, the limiting spectrum is supported on the unit disc, with a non-uniform density.

In the following result, we determine the numerical range of this product model. This demonstrates that the above $\tau$-independence phenomenon also persists at the level of the numerical range.

\begin{thm}[\textbf{Numerical range of products of eGinUEs}] \label{Thm_products eGinUE}
Let $\tau \in [0,1]$ and $n \ge 2$. Let $\mathbf{X}_n^{ \rm e }$ be given by \eqref{def of product eGinUE}. Then we have 
\begin{equation}
    \lim_{N \to \infty} d_H(W(\mathbf{X}^{\rm e}_n), \mathbb{D}(R_n) ) = 0, 
\end{equation}
almost surely, where  
\begin{equation} \label{def of Rn radius products}
 R_n:= \frac{ \sqrt{n} }{ 2^{ n+\frac32 }  } \bigg( 1+\sqrt{ 1+\frac{8}{n} } \, \bigg)^{ \frac32 }  \bigg( 3+\sqrt{ 1+\frac{8}{n} } \, \bigg)^{ \frac{n-1}{2} } .
\end{equation} 
\end{thm}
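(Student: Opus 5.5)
The strategy is the standard support-function characterisation of the numerical range. Since $W(A)$ is compact and convex, it is determined by its support function $h_A(\theta)=\lambda_{\max}(\re(e^{i\theta}A))$, where $\re B=(B+B^*)/2$. Convergence in Hausdorff distance to a compact convex set $K$ is equivalent to uniform convergence in $\theta$ of $h_A$ to $h_K$, and for $K=\mathbb{D}(R_n)$ we have $h_K\equiv R_n$. The $h_A$ are $\|A\|$-Lipschitz in $\theta$, and $\|\mathbf{X}_n^{\rm e}\|$ is almost surely bounded because each factor has bounded norm. It therefore suffices to prove, for each fixed $\theta$ in a countable dense set, that almost surely
\[
\lambda_{\max}\big(\re(e^{i\theta}\mathbf{X}_n^{\rm e})\big)\to R_n .
\]

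The first step is to remove the $\tau$-dependence. Each factor can be written as $X^{\rm e}_k=\sqrt{1+\tau}\,H_k+i\sqrt{1-\tau}\,H_k'$ with $H_k,H_k'$ independent GUE matrices (suitably normalised). The key structural fact, as in \cite{ORSV14}, is that in the large-$N$ limit the relevant $*$-distribution is $\tau$-independent. Free probability makes this concrete: $X^{\rm e}_k$ converges in $*$-distribution to an elliptic element $e_k$, the $e_k$ are $*$-free, and the product $e_1\cdots e_n$ with $n\ge2$ is R-diagonal with the same $*$-distribution as a product of $n$ free circular elements. I would verify this through the free cumulants. Because $\kappa(e,e)=\tau$ is the only cumulant that distinguishes an elliptic element from a circular one, a non-crossing-partition argument shows that such pairings can never occur in alternating moments of the product when $n\ge2$. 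R-diagonality also gives rotation invariance, so the answer is independent of $\theta$ as well.

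This reduces the problem to the case $\tau=0$, i.e.\ products of $n$ free circular elements $c_1\cdots c_n$, which coincides with Proposition~\ref{Prop_index invariance for products of GinUEs}. For that case we need the right edge of the spectral distribution of $\re(T)$, where $T=c_1\cdots c_n$. Since $T$ is R-diagonal, $\re T$ has the same distribution as the free additive "symmetrisation" governed by the singular value distribution of $T$. That singular value distribution is the Fuss–Catalan law, whose $S$-transform is $S(z)=(1+z)^{-n}$. I would then use the Haagerup–Larsen description to compute the R-transform of $\re T$ from the $S$-transform of $T^*T$, locate the right edge of its support as the critical point of the inverse Cauchy transform $G^{-1}(w)=R(w)+1/w$, and solve the resulting equation. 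This step should produce the explicit quadratic behind the factors $1+\sqrt{1+8/n}$ and $3+\sqrt{1+8/n}$ in the definition of $R_n$. As a sanity check, the result for $n=2$ should match Theorem~\ref{Thm_NWishart} at $\tau=0$, $\alpha=0$, i.e.\ \eqref{def of B}.

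The final step upgrades convergence of the empirical spectral distribution of $\re(e^{i\theta}\mathbf{X}_n^{\rm e})$ to convergence of its largest eigenvalue. The lower bound $\liminf\lambda_{\max}\ge R_n$ is immediate from weak convergence. The upper bound, ruling out outliers, is the \textbf{main obstacle}. The cleanest route is strong asymptotic freeness of independent GUE matrices (Haagerup–Thorbjørnsen, and Male), applied to the polynomial $\re(e^{i\theta}\mathbf{X}_n^{\rm e})$ in the GUE matrices $H_k,H_k'$. Strong asymptotic freeness gives norm convergence of every such polynomial, and hence convergence of $\lambda_{\max}$ to the edge of the limiting support. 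For this to work, one must check that the limiting spectral measure has no gap near its top, i.e.\ its support is an interval whose right end is $R_n$. I would check this from the explicit R-transform computation in the previous step.
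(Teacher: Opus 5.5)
Your proposal is correct, and its first and last steps match the paper's. The $\tau$-reduction is the same non-crossing parity argument: a pairing $u_k$--$u_k$ across $a$ intermediate blocks would force both $a$ and $a+1$ to be even. The passage to $\lambda_{\max}$ is the same appeal to Haagerup--Thorbj{\o}rnsen strong convergence.

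The genuine difference is in how you evaluate $R_n$.

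\emph{The paper's route.} It stays purely combinatorial. It counts good pairings through the auxiliary numbers $A_i(t)$ and derives their recursions. It then shows the generating functions are geometric, $p_j=r^jp_0$ with $r=(p_0+1)/2$, and arrives at $2^n(zG-1)=G^2(zG+1)^{n-1}$.

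\emph{Your route.} You use R-diagonal structure instead. Only alternating cumulants of $T=c_1\cdots c_n$ survive, so $\kappa_{2k}(\re T)=2^{1-2k}\alpha_k$, where $\alpha_k$ is the determining sequence. Set $f=\sum_k\alpha_kz^k$. Then $S_{T^*T}(z)=(1+z)^{-n}$, via $S_{T^*T}(z)=f^{\langle-1\rangle}(z)/(z(1+z))$, gives $f=z(1+f)^{n-1}$. Hence $K(w):=R_{\re T}(w)+1/w=w^{-1}\bigl(1+2f(w^2/4)\bigr)$. Putting $B=1+f(w^2/4)=(1+zG)/2$ reproduces exactly the paper's algebraic equation. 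The critical-point condition reduces to $2nB^2-3nB+(n-1)=0$, whose root $B=(3+\sqrt{1+8/n})/4$ gives $R_n$. So your prediction of where the two square-root factors come from is right.

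Your route is shorter once standard R-diagonal machinery (Haagerup--Larsen, Fuss--Catalan $S$-transforms) is granted. It also makes the $\theta$-independence conceptual, as rotation invariance of an R-diagonal element. The paper's route is self-contained and elementary, using nothing beyond the moment--cumulant formula.

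Two small corrections.
\begin{itemize}
\item To identify the full $*$-distribution of $e_1\cdots e_n$ with that of $c_1\cdots c_n$, run the parity argument on arbitrary words in $T$ and $T^*$, i.e.\ arbitrary sequences of dot and star blocks, as the paper does. Alternating moments alone only determine the law of $TT^*$, which is not enough for R-diagonality. Note also that $\kappa_2(e^*,e^*)=\tau$, not only $\kappa_2(e,e)$.
\item The ``no gap near the top'' check is unnecessary. Strong convergence gives Hausdorff convergence of the spectra, so $\lambda_{\max}\to\max\operatorname{supp}\mu$ regardless of the shape of $\mu$. What you need is only to take the smallest positive critical point of $K$, since $K$ restricted to that range is the inverse of $G$ on $(\max\operatorname{supp}\mu,\infty)$.
\end{itemize}
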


See Figure~\ref{Fig_elliptic_products} for a numerical verification of Theorem~\ref{Thm_products eGinUE}. 

\begin{figure}[t]
	\begin{subfigure}{0.24\textwidth}
		\begin{center}	
			\includegraphics[width=\textwidth]{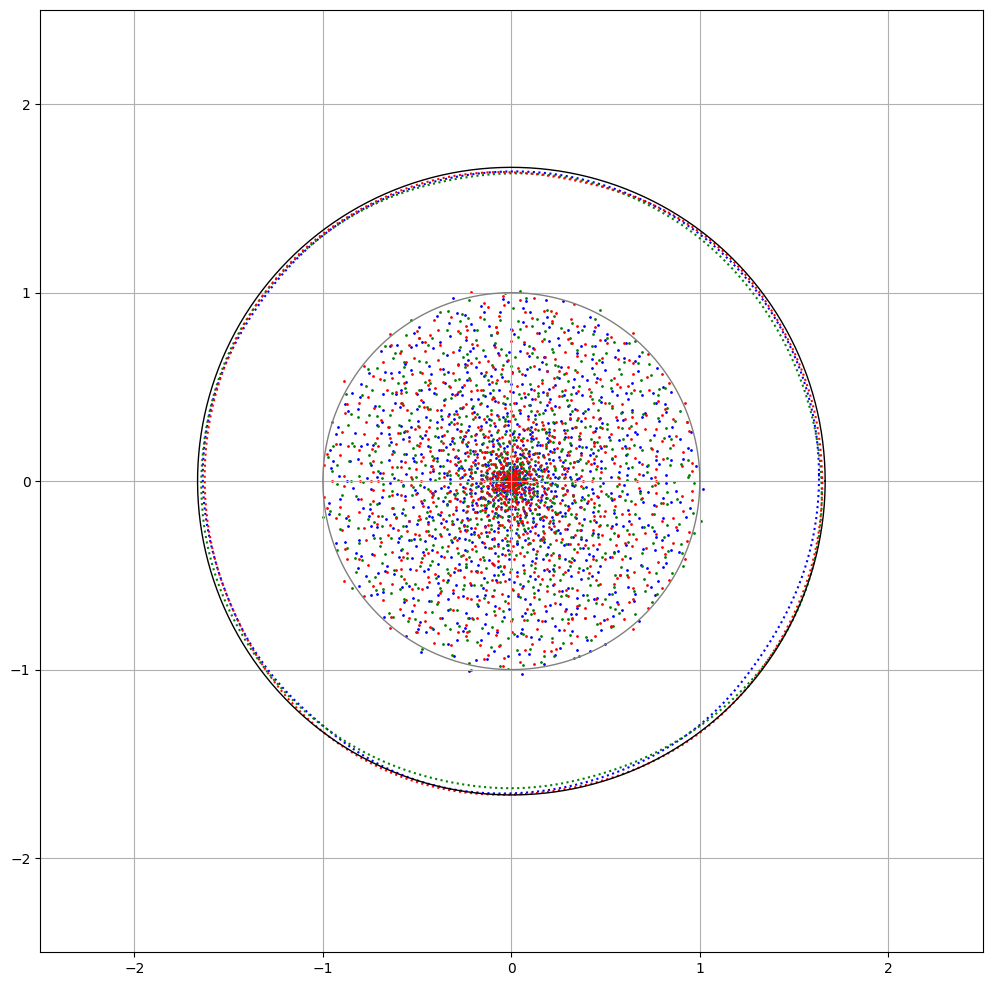}
		\end{center}
		\subcaption{$n=2$}
	\end{subfigure}	\qquad 
		\begin{subfigure}{0.24\textwidth}
		\begin{center}	
			\includegraphics[width=\textwidth]{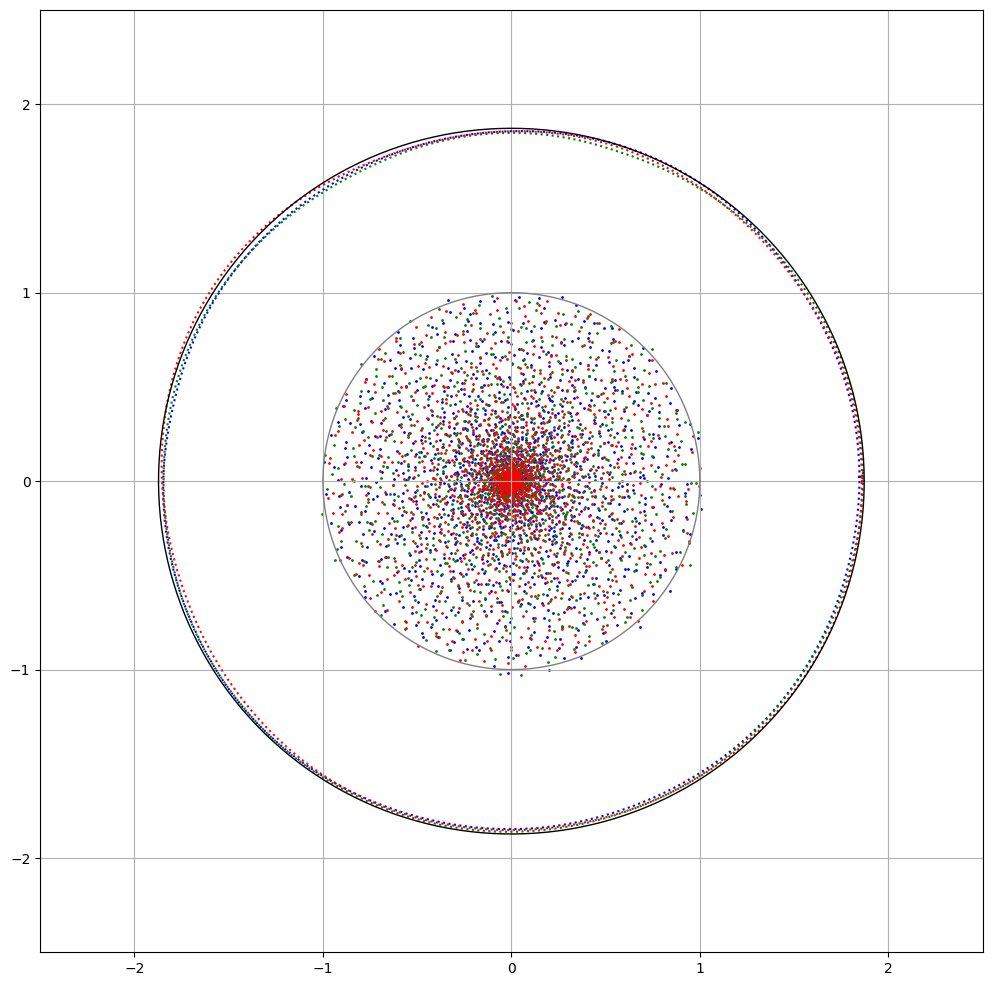}
		\end{center}
		\subcaption{$n=3$}
	\end{subfigure} \qquad 
    	\begin{subfigure}{0.24\textwidth}
		\begin{center}	
			\includegraphics[width=\textwidth]{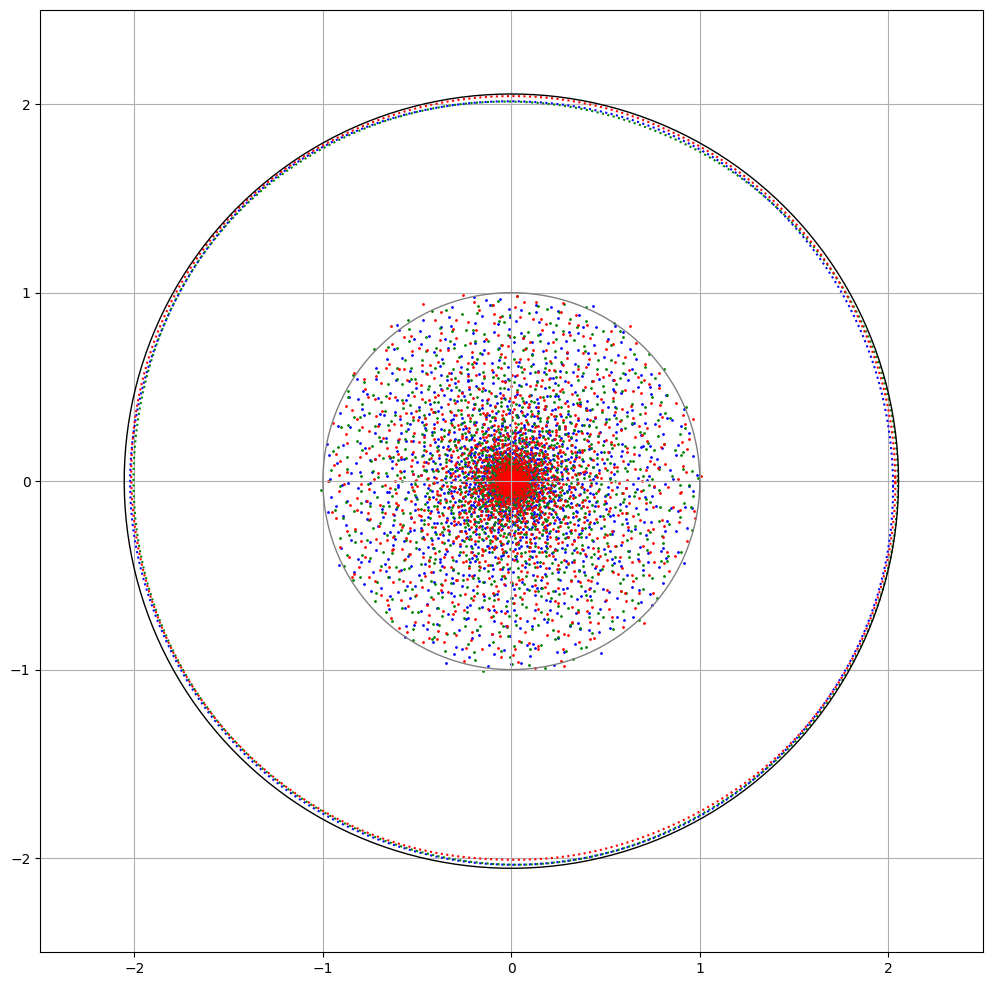}
		\end{center}
		\subcaption{$n=4$}
	\end{subfigure}
	\caption{ The plots display the eigenvalues (dots) and the numerical range (dotted curve) of $\mathbf{X}_n^{\mathrm e}$ for $n=2,3,4$ with $N=500n$, together with the circle of radius $R_n$ defined in \eqref{def of Rn radius products}. In each plot, we simultaneously present the cases $\tau = 0, 0.5, 1$, and observe that, for all these values of $\tau$, the numerical ranges asymptotically coincide with the analytic prediction. 
 }  \label{Fig_elliptic_products}
\end{figure}

Note that the first few values of $R_n$ are given by
\begin{equation}
R_2= \sqrt{ \frac{11+ 5\sqrt{5} }{ 8 }  } \approx 1.665,  \qquad R_3= \frac{ \sqrt{ 63+11\sqrt{33}  }  }{ 6 } \approx 1.872, \qquad R_4= \frac{ \sqrt{ 135+78\sqrt{3}  }  }{ 8 } \approx 2.054.  
\end{equation}
Notice that $R_2$ agrees with the value in \eqref{def of numerical radius for 2 products}. 
It is easy to see that $R_n$ is increasing in $n$, which is consistent with the intuition that the non-normality of products of random matrices increases as the number of factors grows. 
On the other hand, a straightforward computation shows that as $n \to \infty$, 
\begin{equation}
 R_n = \frac{ \sqrt{en}  }{ 2 } +O \Big( \frac{1}{ \sqrt{n} } \Big). 
\end{equation}

In Theorem~\ref{Thm_products eGinUE}, we focus on the product model \eqref{def of product eGinUE} formed from independent eGinUE matrices. 
However, in the GinUE case when $\tau=0$, one may also allow the matrices to appear with multiplicity, and the result continues to hold; see Proposition~\ref{Prop_index invariance for products of GinUEs}.

\subsection*{Organisation of the paper} The remainder of this paper is organised as follows.
In Section~\ref{Section_prelimiaries}, we collect the preliminary material and introduce key lemmas required for the proofs. 
Section~\ref{Section_proofs} is devoted to the proofs of Theorems~\ref{Thm_elliptic and chiral} and~\ref{Thm_NWishart}, while Section~\ref{Section_proof products} contains the proof of Theorem~\ref{Thm_products eGinUE}. 
Finally, in Appendix~\ref{Appendix_non ellipse}, we discuss Remark~\ref{Rem_non ellipse} in more detail with numerical simulations.

\subsection*{Acknowledgements}
Sung-Soo Byun was supported by the National Research Foundation of Korea grants (RS-2023-00301976, RS-2025-00516909). 
We thank Zhigang Bao and Giorgio Cipolloni for helpful comments during the preparation of the paper. 

\section{Preliminaries} \label{Section_prelimiaries}

In this section, we collect several basic properties of numerical ranges and present preliminary material needed for the proofs of our main results.

A remarkable property of the numerical range of a non-Hermitian matrix is that it admits an explicit characterisation in terms of its (rotated) Hermitian part. Specifically, let \(A\) be an \(N \times N\) matrix and \(\theta \in [0,2\pi)\), and denote by \(\lambda_{\mathrm{max}}(\theta,N)\) the largest eigenvalue of \(\re (e^{\mathrm{i}\theta}A)\). The numerical range \(W(A)\) can then be characterised as the intersection of a family of half-planes
\begin{equation} \label{numerical range in terms of half space inters}
W(A)=  \bigcap_{0 \le \theta \le 2\pi} H_{\theta, N}, \qquad H_{\theta, N} := e^{-i \theta} \{z \in \mathbb{C}: \re(z) \le \lambda_\text{max} (\theta, N) \} . 
\end{equation}
This property will play a key role in our analysis. Indeed, it was already used in a crucial way in \cite[Theorem 4.1]{CGLZ14}, where the following result was established. Let \(R>0\), and let \(\{X_N\}_{N\ge1}\) be a sequence of complex random \(N\times N\) matrices such that, for every \(\theta\in\mathbb{R}\), 
\begin{equation} \label{conv of re XN radially symmetric}
    \lim_{N\to\infty} ||\re (e^{i \theta}X_N)|| = R,
\end{equation}
almost surely, Then
\begin{equation} \label{conv to disc CGLZ14}
    \lim_{N\to\infty} d_H(W(X_N), \mathbb{D}(R)) = 0,
\end{equation}
almost surely. 
We next introduce a slight extension of this result to the situation in which
the limit~\eqref{conv of re XN radially symmetric} depends on the angular
parameter~$\theta$. This extension will be used in Theorem~\ref{Thm_elliptic and chiral}.

\begin{prop}[Elliptic numerical range] \label{Prop_elliptic numerical range}
Let \(a \ge b > 0\), and let \(\{X_N\}_{N\ge1}\) be a sequence of complex random \(N\times N\) matrices such that, for every \(\theta\in\mathbb{R}\), 
\begin{equation} \label{re rotated XN conv for ellipse}
    \lim_{N\to\infty} ||\operatorname{Re}(e^{i \theta}X_N)|| = \sqrt{a^2 \cos^2 \theta + b^2 \sin^2 \theta},
\end{equation}
almost surely. Then we have  
\begin{equation}
    \lim_{N\to\infty} d_H(W(X_N), E_{a,b}) = 0,
\end{equation} 
almost surely. Here, $E_{a,b}$ is an ellipse given by \eqref{def of ellipse gen}. 
\end{prop}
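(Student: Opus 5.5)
The plan is to use the half-plane characterisation \eqref{numerical range in terms of half space inters} together with the support function of the ellipse. The support function of $E_{a,b}$ in direction $e^{-i\theta}$ is $h(\theta):=\max_{z\in E_{a,b}}\re(e^{i\theta}z)$. Writing $z=x+iy$, we have $\re(e^{i\theta}z)=x\cos\theta-y\sin\theta$, so
\begin{equation*}
h(\theta)=\sqrt{a^2\cos^2\theta+b^2\sin^2\theta}.
\end{equation*}
Hence $E_{a,b}=\bigcap_\theta e^{-i\theta}\{\re z\le h(\theta)\}$. For compact convex sets, the Hausdorff distance equals the sup-norm distance of the support functions. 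Since $W(X_N)$ is compact and convex with support function $h_N(\theta):=\lambda_{\max}(\re(e^{i\theta}X_N))$, it suffices to show that
\begin{equation*}
\sup_{\theta}|h_N(\theta)-h(\theta)|\to0 \quad \text{almost surely.}
\end{equation*}

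The hypothesis \eqref{re rotated XN conv for ellipse} concerns the operator norm, i.e.\ $\max(\lambda_{\max},-\lambda_{\min})$, not $\lambda_{\max}$ itself. We use $\lambda_{\min}(\re(e^{i\theta}X_N))=-\lambda_{\max}(\re(e^{i(\theta+\pi)}X_N))$ and the symmetry $h(\theta+\pi)=h(\theta)$. This gives
\begin{equation*}
\|\re(e^{i\theta}X_N)\|=\max\bigl(h_N(\theta),h_N(\theta+\pi)\bigr).
\end{equation*}
Thus the hypothesis yields only $\limsup h_N(\theta)\le h(\theta)$, plus the fact that one of $h_N(\theta),h_N(\theta+\pi)$ is close to $h(\theta)$. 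This is the step I expect to be the main obstacle. Following \cite[Theorem 4.1]{CGLZ14}, I would argue as follows. The upper bound $h_N\le h+o(1)$ already gives $\limsup_N \sup_{z\in W(X_N)} d(z,E_{a,b})=0$, uniformly in $\theta$ by the step below. For the reverse inclusion I would use convexity of $W(X_N)$. By passing to the exceptional directions we have points $z_\theta\in W(X_N)$ with $\re(e^{i\theta}z_\theta)\ge h(\theta)-\varepsilon$ for $\theta$ or $\theta+\pi$. Since $W(X_N)$ contains the convex hull of such points, and $\|\re(e^{i\theta}X_N)\|\to h(\theta)$ for a dense countable set of angles, I would attempt to show that for each $\theta$ both antipodal directions are attained. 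One route is to take the points attaining the norm at $\theta$ and at nearby angles. The point $w$ realising the norm at $\theta$ lies in $W(X_N)\subset E_{a,b}+o(1)$ and has $|\re(e^{i\theta}w)|\approx h(\theta)$, so $w$ is near the boundary point of $E_{a,b}$ in direction $\pm e^{-i\theta}$. If only one sign occurred on an open arc of $\theta$, then the realised points would trace one half of the ellipse boundary near those directions. If the paper's intended setting has $X_N$ and $-X_N$ equally distributed (true for all our Gaussian models), one could instead simply assume this symmetry, making $h_N(\theta)$ and $h_N(\theta+\pi)$ identically distributed. I would first check whether the symmetric version suffices for the applications; if a fully general proof is needed, the fallback is to note that the hypothesis really should be read as convergence of $\lambda_{\max}$, which holds for our models by the symmetry just mentioned.

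Uniformity in $\theta$ comes from Lipschitz continuity. We have $|h_N(\theta)-h_N(\theta')|\le\|X_N\|\,|\theta-\theta'|$, and $\|X_N\|\le 2\max_\theta\|\re(e^{i\theta}X_N)\|$ is almost surely bounded: apply the hypothesis at $\theta=0,\pi/2$ and use $\|X_N\|\le\|\re X_N\|+\|\re(e^{i\pi/2}X_N)\|$. The function $h$ is Lipschitz as well. Take a finite $\delta$-net of angles, intersect the countably many almost sure events, and use pointwise convergence on the net. This gives $\sup_\theta|h_N-h|\le C\delta+o(1)$ almost surely, and letting $\delta\to0$ along a sequence concludes the proof of Proposition~\ref{Prop_elliptic numerical range}.
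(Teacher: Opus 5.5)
Your skeleton is the paper's argument: the support function $h(\theta)=\sqrt{a^2\cos^2\theta+b^2\sin^2\theta}$ of $E_{a,b}$, the Lipschitz bound $|h_N(\theta)-h_N(\theta')|\le\|X_N\|\,|\theta-\theta'|$ with $\|X_N\|$ controlled through $\theta=0,\pi/2$, a finite net, and $d_H$ as the sup-distance of support functions. The paper's version is Lemma~\ref{Lem_uniform convergence} followed by \eqref{def of support function of ellipse}.

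The obstacle you isolate is genuine, and it cannot be overcome: with the norm hypothesis the statement is false. Take, for $N\ge2$, the normal matrices $X_N=\operatorname{diag}(z_1,\dots,z_N)$ with $z_j=a\cos t_j+ib\sin t_j$ and $t_j=\pi(j-1)/(N-1)$. Then $W(X_N)=\operatorname{conv}\{z_j\}\to K:=E_{a,b}\cap\{\im z\ge 0\}$. On the other hand, $\|\re(e^{i\theta}X_N)\|=\max_j|a\cos t_j\cos\theta-b\sin t_j\sin\theta|\to h(\theta)$, because this sinusoid in $t$ has amplitude $h(\theta)$ and attains $\pm h(\theta)$ on every interval of length $\pi$. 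Yet $d_H(K,E_{a,b})=b$, with $-ib$ as witness.

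The paper's proof does not address this either. It reads off $\lambda(0)=a$ and $\lambda(-\pi/2)=b$ from the norm hypothesis and feeds them into Lemma~\ref{Lem_uniform convergence}, whose hypothesis concerns $\lambda_{\max}(\theta,N)$. In effect it silently replaces $\max(h_N(\theta),h_N(\theta+\pi))$ by $h_N(\theta)$. The same half-disc example shows that the implication \eqref{conv of re XN radially symmetric}$\Rightarrow$\eqref{conv to disc CGLZ14}, as quoted in Section~\ref{Section_prelimiaries} with the norm, needs the same correction. So following \cite[Theorem 4.1]{CGLZ14} in that form will not help.

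You should therefore drop the convexity repair. The ``one sign on an open arc'' scenario you describe is exactly this counterexample, not a contradiction.

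The symmetry fallback fails too. Let $\sigma=\pm1$ be a single fair random sign used for all $N$. Then $\sigma X_N\stackrel{d}{=}-\sigma X_N$ for each $N$ and the norm hypothesis holds surely, yet $W(\sigma X_N)\to\sigma K$. Equality in law at fixed $N$ gives no almost sure information about $h_N(\theta)$ on its own.

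The correct statement assumes $\lambda_{\max}(\re(e^{i\theta}X_N))\to h(\theta)$ almost surely for every $\theta$. Equivalently, both extreme eigenvalues of $\re(e^{i\theta}X_N)$ converge to $\pm h(\theta)$. Under that hypothesis your proof is complete and coincides with the paper's.

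In the applications, this stronger input comes from the extremal-eigenvalue theorems themselves, not from symmetry. The Wigner result quoted in Section~\ref{Section_proofs} controls both the largest and the smallest eigenvalue. In the chiral case the spectrum of $\re(e^{i\theta}X^{\rm ce})$ is symmetric about $0$ for every realisation, so there $\lambda_{\max}$ equals the norm.
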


To establish such a lemma, it is convenient to formulate a general statement that allows one to apply the Toeplitz–Hausdorff theorem under uniform convergence. We present this statement below. 

\begin{lem} \label{Lem_uniform convergence}
Let \(\{X_N\}_{N\ge1}\) be a sequence of complex random \(N\times N\) matrices, and denote by \(\lambda_{\mathrm{max}}(\theta,N)\) the largest eigenvalue of \(\re (e^{{i}\theta}X_N)\). Suppose that, for every \(\theta\in[0,2\pi]\), 
\begin{equation}
    \lim_{N\to\infty} \lambda_{ \rm max } (\theta, N) = \lambda(\theta), 
\end{equation}
almost surely. For $\theta \in [0, 2\pi]$, define
\begin{align} 
 E := \bigcap_{0 \le \theta \le 2\pi} H_{\theta}, \qquad H_{\theta} := e^{-i \theta} \{z \in \mathbb{C}: \re(z) \le \lambda(\theta) \}. 
\end{align} 
Assume furthermore that \(\lambda(0)+\lambda(-\tfrac{\pi}{2})<\infty\). Then we have 
\begin{equation}
    \lim_{N\to\infty} d_H(W(X_N), E) = 0,
\end{equation}
almost surely. 
\end{lem}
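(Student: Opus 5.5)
The plan is to pass from pointwise almost sure convergence of $\lambda_{\max}(\theta,N)$ to uniform convergence in $\theta$, and then to control the Hausdorff distance between the two intersections of half-planes in \eqref{numerical range in terms of half space inters}.

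\textbf{Step 1: a uniform Lipschitz bound.} Since $\re(e^{i\theta}X_N) = \cos\theta\,\re X_N - \sin\theta\,\im X_N$, Weyl's inequality gives
\begin{equation*}
|\lambda_{\max}(\theta,N)-\lambda_{\max}(\theta',N)| \le |\theta-\theta'|\,\big(\|\re X_N\|+\|\im X_N\|\big).
\end{equation*}
Now $\|\re X_N\| = \max\{\lambda_{\max}(0,N),\lambda_{\max}(\pi,N)\}$, and similarly $\|\im X_N\|$ is controlled by $\lambda_{\max}(\pm\pi/2,N)$. These four sequences converge almost surely to finite limits. Here the hypothesis $\lambda(0)+\lambda(-\pi/2)<\infty$ is meant to ensure finiteness; together with $\lambda(\theta)+\lambda(\theta+\pi)\ge 0$, and since the limit at each fixed angle is a real number anyway, all four limits are finite. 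Hence, almost surely, the maps $\theta\mapsto\lambda_{\max}(\theta,N)$ are equi-Lipschitz for large $N$, with some constant $L$.

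\textbf{Step 2: uniform convergence.} Intersect the almost sure events over the countable set of rational multiples of $\pi$. On this event, pointwise convergence on a dense set combined with equicontinuity gives uniform convergence on $[0,2\pi]$ by an $\varepsilon/3$ argument. It also follows that $\lambda(\cdot)$ is $L$-Lipschitz. Set $\delta_N:=\sup_\theta|\lambda_{\max}(\theta,N)-\lambda(\theta)|\to0$.

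\textbf{Step 3: Hausdorff estimate.} The sets $W(X_N)$ and $E$ are compact convex sets with support functions $h_{W}(\theta)$ and $h_E(\theta)$ in the direction $e^{-i\theta}$. For $W(X_N)$ this support function is exactly $\lambda_{\max}(\theta,N)$, by \eqref{numerical range in terms of half space inters}. For $E$ it is at most $\lambda(\theta)$, with equality whenever $\lambda$ is itself a support function.

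The key comparison is as follows. If $\mu_1\le\mu_2+\delta$ pointwise, then $\bigcap_\theta\{\re(e^{i\theta}z)\le\mu_1(\theta)\}\subset\bigcap_\theta\{\re(e^{i\theta}z)\le\mu_2(\theta)+\delta\}$. The right-hand intersection is the $\delta$-enlargement of $\bigcap_\theta\{\re(e^{i\theta}z)\le\mu_2(\theta)\}$. This is justified for convex bodies because the support function of a $\delta$-neighbourhood is the support function plus $\delta$, and the half-plane intersection with data $\mu_2$ has a support function that, at the extremal level, agrees with the support function of the $\delta$-neighbourhood.

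To make this rigorous, I would argue directly. If $z\in W(X_N)$, then $\re(e^{i\theta}z)\le\lambda(\theta)+\delta_N$ for all $\theta$. Suppose $d(z,E)=r>0$. Let $p$ be the nearest point of $E$ and let $u=(z-p)/r$ be the outward normal at $p$. Then the support function of $E$ in the direction $u$ equals $\re(\bar u p)$, and $\re(\bar u z)=h_E(u)+r$. So it suffices to show $h_E\ge\lambda-C\sqrt{\delta}$ or similar, or to show that $E$ is nonempty. The issue is that $h_E$ can be strictly less than $\lambda$ at some angles.

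The cleaner route is to note that $W(X_N)$ equals the intersection with data $h_{W}=\lambda_{\max}(\cdot,N)$. Then compare the intersections with data $\lambda_{\max}(\cdot,N)$ and $\lambda\pm\delta_N$. Monotonicity of the intersection in the data gives
\begin{equation*}
E^{-}_N\subset W(X_N)\subset E^{+}_N,
\end{equation*}
where $E^{\pm}_N$ denotes the intersection with data $\lambda\pm\delta_N$. It therefore suffices to show $d_H(E^{+}_N,E^{-}_N)\to0$.

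For the upper containment, $E^+_N=\bigcap_\theta\{\re(e^{i\theta}z)\le\lambda(\theta)+\delta_N\}$, and this contains the $\delta_N$-neighbourhood of $E$. The reverse direction is the main obstacle: $E^{+}_N$ may be strictly larger than $E+\mathbb{D}(\delta_N)$, and $E^-_N$ could even be empty if $E$ has empty interior (for instance, when $E$ is a segment, as in the Hermitian limit). I would handle this by a compactness argument rather than quantitatively. The sets $E^{+}_N$ are decreasing and bounded, with $\bigcap_N E^{+}_N=E$, so $d_H(E^+_N,E)\to0$. For the lower bound, instead of $E^-_N$, I would show directly that every point of $E$ is close to $W(X_N)$. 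If $p\in E$ and $d(p,W(X_N))=r$, then separating $p$ from the convex set $W(X_N)$ gives an angle $\theta$ with $\lambda_{\max}(\theta,N)\le\re(e^{i\theta}p)-r\le\lambda(\theta)-r$. This forces $r\le\delta_N$. Hence
\begin{equation*}
d_H(W(X_N),E)\le\max\{\delta_N,\,d_H(E^+_N,E)\}\to0,
\end{equation*}
almost surely.
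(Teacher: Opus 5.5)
Your proof is correct. Steps 1 and 2 match the paper's argument: a uniform Lipschitz bound from Weyl's inequality, then upgrading pointwise to uniform convergence $\delta_N\to0$. You are slightly more careful than the paper on two points. You bound $\|\re X_N\|$ by $\max\{\lambda_{\max}(0,N),\lambda_{\max}(\pi,N)\}$ rather than by $\lambda_{\max}(0,N)$ alone, and you handle null sets through a countable dense set of angles. Your separation argument for $E\subset W(X_N)+\mathbb{D}(\delta_N)$ is also the paper's second inclusion made explicit, since $\lambda_{\max}(\cdot,N)$ is the support function of $W(X_N)$.

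You genuinely differ in the other inclusion. The paper passes directly from $\re(e^{i\theta}z)<\lambda(\theta)+\varepsilon$ for all $\theta$ to $d(z,E)\le\varepsilon$. That step tacitly uses that $\lambda$ is the support function of $E$. This does hold here: $\lambda$ is a pointwise limit of the support functions $\lambda_{\max}(\cdot,N)$, so its positively homogeneous extension to $\mathbb{R}^2$ is sublinear. A finite sublinear function is the support function of the convex set it cuts out. So your worry that $h_E$ might lie strictly below $\lambda$, or that $E^+_N$ might be strictly larger than $E+\mathbb{D}(\delta_N)$, never materialises. Seeing this gives the clean quantitative bound $d_H(W(X_N),E)\le\delta_N$. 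Your nested-compact-sets argument avoids the identification $h_E=\lambda$ altogether, at the cost of losing any rate in that direction.

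Your route needs two small repairs. First, $\delta_N$ need not be monotone in $N$, so $E^+_N$ need not decrease. Index by $\delta$ instead: set $K_\delta:=\bigcap_\theta\{\re(e^{i\theta}z)\le\lambda(\theta)+\delta\}$. This family is compact (bounded because $\lambda$ is bounded), decreases as $\delta\downarrow0$, and satisfies $\bigcap_{\delta>0}K_\delta=E$. Then $\sup_{z\in K_\delta}d(z,E)\to0$, and you apply this with $\delta=\delta_N$. Second, you need $E\neq\emptyset$, both for that convergence and for $d_H(W(X_N),E)$ to make sense. This follows from Cantor's intersection theorem, since $K_\delta\supseteq W(X_N)\neq\emptyset$ as soon as $\delta_N\le\delta$.
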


\begin{proof} This statement provides a slight extension of \cite[Theorem~4.1]{CGLZ14}. The proof follows the same general strategy, with only minor modifications to the original argument. 
Note that 
\begin{align*}
    \limsup_{N \to \infty} ||X_N|| & \le \limsup_{N \to \infty} ||\re(X_N)|| + \limsup_{N \to \infty} ||\im(X_N)|| \\
    &= \limsup_{N \to \infty} ||\re(X_N)|| + \limsup_{N \to \infty} ||\re(e^{-i \frac{\pi}{2}}X_N)|| = \lambda(0) + \lambda(-\tfrac{\pi}{2}) < \infty. 
\end{align*}
Write $C:= \sup_{N\to\infty}||X_N|| < \infty$. 
Note that for all $N$ and $\theta, \phi \in [0, 2\pi]$, we have 
\begin{align*}
    |\lambda_{\operatorname{max}}(\theta, N) - \lambda_{\operatorname{max}}(\phi, N)| &\le ||\operatorname{Re}(e^{i \theta} X_N) - \operatorname{Re}(e^{i \phi} X_N)||  \le |e^{i \theta} - e^{i \phi}| \cdot ||X_N|| 
    \le  C|\theta-\phi|. 
\end{align*}
Therefore by taking the limit $N \to \infty$, we have $|\lambda(\theta) - \lambda(\phi)| \le C|\theta - \phi|$. 

Fix $\varepsilon>0$. Choose $\delta>0$ so that $C\delta<\frac{\varepsilon}{3}$, and let $\mathcal N=\{\theta_1,\dots,\theta_m\}\subset[0,2\pi]$ be a finite $\delta$–net of $[0,2\pi]$, i.e. for every $\theta\in[0,2\pi]$ there exists
$\theta_j\in\mathcal N$ such that $|\theta-\theta_j|\le\delta$.
By the pointwise convergence assumption, there exists $N_0\in\mathbb N$ such that for all $N\ge N_0$ and all $j\in\{1,\dots,m\}$, $|\lambda_{\max}(\theta_j,N)-\lambda(\theta_j)\big|<\frac{\varepsilon}{3}.$

Fix $N\ge N_0$ and an arbitrary $\theta\in[0,2\pi]$, and choose $\theta_j\in\mathcal N$
with $|\theta-\theta_j|\le\delta$. Using the Lipschitz continuity of
$\lambda_{\max}(\cdot,N)$ and $\lambda$, we obtain
\begin{align*}
    |\lambda_{\text{max}}(\theta, N) - \lambda(\theta)| &\le |\lambda_{\text{max}}(\theta, N) - \lambda_{\text{max}}(\theta_j, N)| + |\lambda_{\text{max}}(\theta_j, N) - \lambda(\theta_j)| + |\lambda(\theta_j) - \lambda(\theta)| 
    < \epsilon.
\end{align*}
Since $\theta$ was arbitrary, this proves the uniform convergence
\begin{equation}
\sup_{\theta\in[0,2\pi]}
\big|\lambda_{\max}(\theta,N)-\lambda(\theta)\big| \to 0,
\qquad \textup{as } N\to\infty .
\end{equation}

Define $h_N(\theta):=\lambda_{\max}(\theta,N)$. Then
$\|h_N-\lambda\|_\infty\to0$ as $N\to\infty$.
Let $z\in W(X_N)$. Note that for any $\theta\in[0,2\pi]$,
\[
\Re\big(e^{\mathrm i\theta}z\big)\le h_N(\theta)
\le \lambda(\theta)+\|h_N-\lambda\|_\infty
<\lambda(\theta)+\varepsilon
\]
for all sufficiently large $N$. Then by \eqref{numerical range in terms of half space inters}, $W(X_N)$ is contained in the $\ve$-neighourhood of $E$. Conversely, if $z\in E$, then for any $\theta\in[0,2\pi]$,
\[
\Re\big(e^{\mathrm i\theta}z\big)\le\lambda(\theta)
\le h_N(\theta)+\|h_N-\lambda\|_\infty
<h_N(\theta)+\varepsilon .
\]
Therefore $E$ is contained in the $\ve$-neighourhood of $W(X_N)$ for sufficiently large $N$.
This completes the proof.
\end{proof}

Building on Lemma~\ref{Lem_uniform convergence}, we show Proposition~\ref{Prop_elliptic numerical range}.

\begin{proof}[Proof of Proposition~\ref{Prop_elliptic numerical range}]
Note that by \eqref{re rotated XN conv for ellipse}, we have $\lambda(0) = a$ and $\lambda(-\frac{\pi}{2}) = b$.  
Let $l_\theta$ denote the line through the origin with argument $\theta\in[0,2\pi)$.
Among the two lines perpendicular to $l_\theta$ and tangent to the ellipse $E_{a,b}$, let $\widetilde l_\theta$ be the one lying in the direction of $l_\theta$.
Denote by $P_\theta$ the intersection point of $l_\theta$ and $\widetilde l_\theta$.
A direct computation shows that the distance from the origin to $P_\theta$ is given by
\begin{equation} \label{def of support function of ellipse}
|P_\theta| =\sqrt{a^2\cos^2\theta+b^2\sin^2\theta},
\end{equation} 
which coincides with \eqref{re rotated XN conv for ellipse}.
Therefore, by Lemma~\ref{Lem_uniform convergence}, the desired result follows. 
\end{proof}

We end this section by recalling a few basic notions from free probability theory that will be used in the proof of Theorem~\ref{Thm_NWishart}; see e.g. \cite{HP00,MS17,NS06,BS10} for further background.

Let $(\mathcal A,\phi)$ be a non-commutative probability space, where $\mathcal A$ is a unital $\ast$-algebra and $\phi$ is a tracial state.
Two self-adjoint elements $a_1,a_2\in\mathcal A$ are said to be \emph{free} if all their mixed centered moments vanish.
If $a_i$ has distribution $\mu_i$ $(i=1,2)$, the distribution of $a_1+a_2$
depends only on $\mu_1$ and $\mu_2$ and is called the \emph{free additive convolution} of $\mu_1$ and $\mu_2$, denoted by $\mu_1\boxplus\mu_2$. 

Free additive convolution naturally arises as the large-$N$ limit of sums of
independent random matrices.
More precisely, if $A_N$ and $B_N$ are independent Hermitian random matrices whose empirical spectral measures converge almost surely to compactly supported probability measures $\mu$ and $\nu$, respectively, and if $U_N$ is an independent Haar unitary matrix, then $A_N$ and $U_N B_N U_N^\ast$ are asymptotically free almost surely.
Consequently, the empirical spectral measure of $A_N+U_N B_N U_N^\ast$ converges almost surely to $\mu\boxplus\nu$; see e.g. \cite[Proposition~4.3.9]{HP00}.

In order to characterise $\mu\boxplus\nu$, it is convenient to work with analytic transforms.
For a compactly supported probability measure $\mu$ on $\mathbb R$, its
\emph{Cauchy transform} is defined by
\begin{equation}
G_\mu(z)=\int_{\mathbb R}\frac{1}{z-x}\,d\mu(x),
\qquad z\in\mathbb C\setminus\mathbb R.
\end{equation}
The measure $\mu$ can be recovered from $G_\mu$ via the Stieltjes inversion formula. 
The \emph{$R$-transform} of $\mu$ can be defined implicitly through the functional relation
\begin{equation} \label{def of R-G-relation}
G_\mu\Big(R_\mu(z)+\frac{1}{z}\Big)=z .
\end{equation} 
The central advantage of the $R$-transform is that it linearises free additive convolution: if $\mu$ and $\nu$ are compactly supported probability measures on
$\mathbb R$, then
\begin{equation} \label{R-additivity}
R_{\mu\boxplus\nu}(z)=R_\mu(z)+R_\nu(z).
\end{equation}

\section{Proofs of Theorems~\ref{Thm_products eGinUE} and ~\ref{Thm_NWishart}} \label{Section_proofs}

In this section, we present the proofs of the main results. 

For the reader’s convenience, we first recall a well-known result on the almost sure convergence of the extremal eigenvalues of a Wigner matrix; see e.g. \cite[Theorem~5.2]{BS10}. Suppose that the diagonal elements of the Wigner matrix $W_n =  (x_{ij})$ are i.i.d. real random variables, the elements above the diagonal are i.i.d. complex random variables, and all these variables are independent. Then, the largest eigenvalue of $W_n$ tends to $c_1$ and the smallest eigenvalue tends to $c_2$ almost surely if and only if $\mathbb{E}(x_{11}^2) < \infty $, $ \mathbb{E}(x_{12}) = 0$, $ \mathbb{E}(|x_{12}|^2) = \sigma^2/n< \infty$, and  $c_1 = 2 \sigma, c_2 = -2 \sigma$. 
The corresponding result for the sample covariance matrix can be found in \cite[Theorem 5.10]{BS10}; see also \cite{BSY88,BY93}.

\subsection{Proof of Theorem~\ref{Thm_elliptic and chiral}}

We first show the result for the elliptic Ginibre ensemble. 
By the definition of $X^{ \rm e }$ in \eqref{def of eGinibre}, we have 
\begin{equation}
    \re (e^{i \theta} X^{ \rm e }) = \frac{\sqrt{1+\tau}\cos(\theta)+i\sqrt{1-\tau}\sin(\theta)}{2}G + \frac{\sqrt{1+\tau}\cos(\theta)-i\sqrt{1-\tau}\sin(\theta)}{2}G^*. 
\end{equation}
Observe that this corresponds to a suitably rescaled GUE matrix (or, more generally, a Wigner matrix). Let $x_{ij}$ and $g_{ij}$ denote the $(i,j)$-th entries of $\re (e^{i \theta} X^{ \rm e })$ and $G$, respectively.
A direct computation yields
\begin{align}
   x_{11} &= \sqrt{1+\tau} \cos(\theta) \re(g_{11}) - \sqrt{1-\tau}\sin(\theta)\im (g_{11}),
   \\
     x_{12} &= \frac{\sqrt{1+\tau}\cos(\theta)+i\sqrt{1-\tau}\sin(\theta)}{2}g_{12} +  \frac{\sqrt{1+\tau}\cos(\theta)-i\sqrt{1-\tau}\sin(\theta)}{2} \overline{g_{21}} .
\end{align}
Consequently, we have 
\begin{align}
  \mathbb{E} \big[ x_{11}^2 \big] & = \frac{(1+\tau)\cos^2(\theta) + (1-\tau)\sin^2(\theta)}{2N} ,
  \\
   \mathbb{E} \big[ |x_{12}|^2 \big] &= \frac{(1+\tau)\cos^2(\theta)+(1-\tau)\sin^2(\theta)}{2N}, 
\end{align}
and $\mathbb{E} \big[ x_{12} \big] = 0$. 
Applying the general convergence result for the extremal eigenvalues of Wigner matrices mentioned above, we obtain 
\begin{equation}
    \lim_{N \to \infty} \left\| \re ( e^{i \theta} X^{ \rm e } ) \right\| = \sqrt{2(1+\tau) \cos^2 \theta + 2(1 - \tau) \sin^2 \theta}. 
\end{equation}
Then Theorem~\ref{Thm_elliptic and chiral} (i) now follows from Proposition~\ref{Prop_elliptic numerical range}.

We next prove the result for the chiral elliptic Ginibre ensemble. 
By the definition of $X^{\rm ce}$ in~\eqref{def of Dirac matrix}, we have
\begin{equation}
    \re(e^{i \theta} X^{ \rm ce }) = \begin{bmatrix} 0 & \sqrt{1+\tau} \cos(\theta) P + i \sqrt{1-\tau} \sin(\theta) Q \\ \sqrt{1+\tau} \cos(\theta) P^{*} - i \sqrt{1-\tau} \sin(\theta) Q^{*} & 0 \end{bmatrix}. 
\end{equation}
Since $P$ and $Q$ are independent rectangular Ginibre matrices with Gaussian entries, we have 
\begin{equation}
    \sqrt{1+\tau} \cos(\theta) P + i \sqrt{1-\tau} \sin(\theta) Q \stackrel{d}{=} \sqrt{(1+\tau) \cos^2(\theta) + (1-\tau) \sin^2(\theta)} P. 
\end{equation}
Consequently, it follows that 
\begin{equation}
    \re(e^{i \theta} X^{ \rm ce }) \stackrel{d}{=} \sqrt{(1+\tau) \cos^2(\theta) + (1-\tau) \sin^2(\theta)} \begin{bmatrix} 0 & P \\ P^{*} & 0 \end{bmatrix}. 
\end{equation}
Note here that the nonzero eigenvalues of the block matrix are given by $\pm \sqrt{\lambda_j}$, where $\{\lambda_j\}$ are the eigenvalues of $P P^{*}$.  Hence, its spectral norm is equal to $\|P\|$. By the almost sure convergence of the largest eigenvalue of a Wishart matrix to the right edge of the Marchenko--Pastur law, we obtain
\begin{equation}
  \lim_{N \to \infty} \left\| \re ( e^{i \theta} X^{ \rm ce } ) \right\| =  \frac{\sqrt{1+\alpha}+1}{\sqrt{2}}\sqrt{(1+\tau) \cos^2(\theta) + (1-\tau) \sin^2(\theta)}. 
\end{equation}
Then by Proposition~\ref{Prop_elliptic numerical range}, we obtain Theorem~\ref{Thm_elliptic and chiral} (ii). \qed

\begin{rem}[Alternative derivation for the elliptic Ginibre matrix]
While our proof provides a systematic framework for deriving the numerical range—particularly in the case of an elliptic domain—there is, for the elliptic Ginibre ensemble, a simpler argument available. 
Indeed, the numerical range may be recovered directly from the corresponding result for the Ginibre ensemble by exploiting the structural relation between the two models.

More precisely, by the definition \eqref{def of eGinibre}, for any $y \in \mathbb{C}^N$ with $\|y\|_2 = 1$, we have
\begin{align*}
    y^{*}X^{ \rm e }y 
    &= \frac{\sqrt{1+\tau}+\sqrt{1-\tau}}{2} z + \frac{\sqrt{1+\tau}-\sqrt{1-\tau}}{2}z^{*},
\end{align*}
where we have set $z := y^{*} G y$. 
Since $z$ belongs to the numerical range of $G$, the claim follows from the known description of this set.
More specifically, as $N \to \infty$, it follows from \cite{CGLZ14} that the numerical range of $G$ converges to the disc $\mathbb{D}(\sqrt{2})$. 
Hence we may write 
\[
z = r (\cos\theta + i \sin\theta),
\qquad
0 \le r \le \sqrt{2}, 
\quad \theta \in \mathbb{R}.
\]
Substituting into the above expression yields
\[
y^{*} X^{\mathrm e} y 
= r \sqrt{1+\tau} \cos\theta
  + i\, r \sqrt{1-\tau} \sin\theta,
\]
which parametrises precisely the ellipse stated in Theorem~\ref{Thm_elliptic and chiral} (i).
\end{rem}

\subsection{Proof of Theorem~\ref{Thm_NWishart}} In this subsection, we prove Theorem~\ref{Thm_NWishart}. 
Recall that the quartic polynomial $D_\theta$ is defined in~\eqref{quartic polynomial of x among theta}. 
In Theorem~\ref{Thm_NWishart}, the boundary of the numerical range is characterised in terms of the larger of the two real roots of $D_\theta$. 
We begin by showing that $D_\theta$ indeed has exactly two real roots. 

\begin{lem} \label{Lem_real root count}
For every $\alpha \ge 0$, $0<\tau<1$, and fixed $\theta$, the equation $D_\theta(x)=0$ has exactly two distinct real roots.
\end{lem}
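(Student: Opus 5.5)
Since $D_\theta$ is a real quartic with leading coefficient $a_4=16(1-\tau^2+\mathsf{c}^2\tau^2)>0$ (because $0<\tau<1$), it tends to $+\infty$ at $\pm\infty$. It therefore has either $0$, $2$ or $4$ real roots, counted with multiplicity. The plan has two parts: show there are at least two sign changes, and rule out four real roots (and double roots).

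For the lower bound, evaluate at $x=0$:
\[
D_\theta(0)=a_0=(2\alpha+1)^2(1-\tau^2)^2\big(\alpha^2\mathsf{c}^2\tau^2-(2\alpha+1)(1-\tau^2)\big).
\]
This is not obviously negative, so $0$ may be the wrong test point. The natural choice is instead the point coming from the origin of $D_\theta$. The larger root $\lambda(\theta)$ should be the right edge of the limiting spectrum of $\re(e^{i\theta}X^{\rm w})$, and the other real root the left edge. So I would test at the mean of that limiting spectral measure, namely $x_0=\re(e^{i\theta}\,\mathrm{tr}X^{\rm w}/N)\to \mathsf{c}\tau(1+\alpha)$ up to normalisation. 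One checks by direct substitution that $D_\theta(x_0)<0$. If this substitution is messy, I would instead exploit the origin of $D_\theta$ as a discriminant: $D_\theta$ should arise as the discriminant (in $G$) of the cubic or quadratic algebraic equation for the Cauchy transform of $\mu_1\boxplus\mu_2$. Real zeros of such a discriminant mark the edges and any spurious branch points, and a single-interval support gives exactly two edges. Either way, $D_\theta(x_0)<0$ together with the positive leading coefficient gives at least one root on each side of $x_0$.

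For the upper bound (no four real roots), I would compute the discriminant of $D_\theta$ as a quartic in $x$, or more cleverly exploit the structure. One route is to shift $x\mapsto x+\mathsf{c}\tau(\alpha+2)/2$ to remove the cubic term, giving a depressed quartic $16(1-\tau^2+\mathsf{c}^2\tau^2)(y^4+py^2+qy+r)$. A depressed quartic has four real roots only if $p<0$ and its discriminant is positive. I would then show the discriminant is negative, which forces exactly two simple real roots and also gives distinctness.

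The main obstacle is the sign of this discriminant as a polynomial in $(\alpha,\tau^2,\mathsf{c}^2)$. I expect it to factor, perhaps with a perfect-square factor times something like $-\alpha^k(1-\tau^2)^m(\ldots)$, because the edges come from a free-convolution equation. I would use computer algebra to obtain the factorisation and then verify that each factor has constant sign on $\alpha\ge0$, $\tau\in(0,1)$, $\mathsf{c}\in[-1,1]$. A fallback is to check the $\tau=0$ case, which is the even quartic of Remark~\ref{Rem_products of rectangular}, directly. That quartic is $16x^4+4(\alpha^2-8\alpha-11)x^2-(2\alpha+1)^3$, a quadratic in $x^2$ with negative constant term, so it has exactly one positive root in $x^2$ and hence exactly two real roots $\pm B$. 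I would then argue by continuity in $(\tau,\mathsf{c})$ that the number of real roots cannot change. This requires showing that the discriminant never vanishes on the parameter region, which again reduces to the factorisation step.
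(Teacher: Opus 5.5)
Your plan is essentially the paper's proof: $a_4>0$ together with a negative value at a test point, followed by nonvanishing of $\operatorname{Res}(D_\theta,D_\theta')=a_4\operatorname{Disc}(D_\theta)=-2^{20}(\alpha+1)^2(1-\tau^2)^2(1-\tau^2\sin^2\theta)\,F(\alpha,\tau,\sin^2\theta)^3$. The paper handles your ``main obstacle'' by showing that the cubic $F$ is strictly decreasing in $u=\sin^2\theta$, with $F|_{u=1}=(\alpha+5)^2(2\alpha+1)(1-\tau^2)^3>0$.

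The paper tests at $x=\alpha\tau\cos\theta$ rather than at your mean $x_0=(1+\alpha)\tau\cos\theta$, but your point also works. With $q=\tau^2\cos^2\theta$ and $s=1-\tau^2$ one finds $D_\theta(x_0)=-16(\alpha+1)^2(4\alpha+3)q^3-(\alpha+1)(96\alpha^2+200\alpha+96)q^2s-(48\alpha^3+155\alpha^2+164\alpha+56)qs^2-(2\alpha+1)^3s^3<0$. Concluding directly from $\operatorname{Disc}(D_\theta)<0$, as you propose, is cleaner than the paper's continuity argument, and in fact it makes the test-point step unnecessary.
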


\begin{proof}
Note that by \eqref{quartic polynomial of x among theta}, the leading coefficient of $D_\theta(x)$ is $16\bigl(1-\tau^2 \sin^2\theta\bigr) > 0,$ and a direct computation shows that $D_\theta\bigl(\tau \cos\theta \, \alpha\bigr) < 0.$
It follows that the equation $D_\theta(x)=0$ has at least two real roots.

Suppose, for contradiction, that there exist parameters $\alpha' \ge 0$ and $0<\tau'<1$ such that $D_\theta(x)=0$ has more than two real roots. 
By continuity of the roots with respect to the coefficients of the polynomial, there must then exist parameters $(\alpha,\tau)$ for which $D_\theta(x)=0$ has four real roots, with at least one of them being a double root. 
We now fix such a pair $(\alpha,\tau)$ and show that this leads to a contradiction.

For this, we first recall the definition of the resultant of two polynomials. 
Let
\[
\mathsf{P}(x)=a_m x^m+\cdots+a_0,
\qquad
\mathsf{Q}(x)=b_n x^n+\cdots+b_0
\]
be polynomials of degrees \(m\) and \(n\), respectively. The resultant of \(\mathsf{P}\) and \(\mathsf{Q}\) is defined as
\begin{equation}
\operatorname{Res}(\mathsf{P},\mathsf{Q}):=\det S(\mathsf{P},\mathsf{Q}),
\end{equation}
where the Sylvester matrix \(S(\mathsf{P},\mathsf{Q})\) is the \((m+n)\times(m+n)\) matrix defined by
\[
S(\mathsf{P}, \mathsf{Q} )=
\begin{pmatrix}
a_m & a_{m-1} & \cdots & a_0 & 0 & \cdots & 0 \\
0 & a_m & a_{m-1} & \cdots & a_0 & \ddots & \vdots \\
\vdots & \ddots & \ddots &  &  & \ddots & 0 \\
0 & \cdots & 0 & a_m & a_{m-1} & \cdots & a_0 \\
b_n & b_{n-1} & \cdots & b_0 & 0 & \cdots & 0 \\
0 & b_n & b_{n-1} & \cdots & b_0 & \ddots & \vdots \\
\vdots & \ddots & \ddots &  &  & \ddots & 0 \\
0 & \cdots & 0 & b_n & b_{n-1} & \cdots & b_0
\end{pmatrix}.
\]
Here, the first \(n\) rows consist of shifted copies of the coefficient vector \((a_m,\dots,a_0)\) and the last \(m\) rows consist of shifted copies of \((b_n,\dots,b_0)\). 
It satisfies the fundamental property that $\operatorname{Res}(\mathsf{P},\mathsf{Q})=0$ if and only if $\mathsf{P}$ and $\mathsf{Q}$ have a common root.  
In particular, a polynomial \(\mathsf{P}\) has a multiple root if and only if $\operatorname{Res}(\mathsf{P},\mathsf{P}')=0.$

Since $D_\theta(x)=0$ is assumed to have a real root of multiplicity two, 
the resultant of $D_\theta(x)$ and its derivative $D_\theta'(x)$ must vanish. 
A direct computation yields 
\begin{equation}
    \operatorname{Res}(D_{\theta}, D_{\theta}') = -2^{20}(\alpha+1)^2(1-\tau)^2(1+\tau)^2(1-\tau^2\sin^2(\theta))F(\alpha, \tau, \sin^2(\theta))^3
\end{equation}
where $F(\alpha,\tau,u)$ is an explicit polynomial obtained by straightforward computation:  
\begin{align}
\begin{split} 
    F(\alpha, \tau, u) & = -16 \alpha^3 \tau^6 u^3 + 24\alpha^2\tau^4\Big( \alpha \tau^2 + \alpha + \tau^2-1\Big)u^2 
    \\
    & \quad -3\tau^2
\Big( 4\alpha^3(\tau^2+1)^2 + \alpha^2(\tau^2-1)(17\tau^2-1) + (22\alpha+9)(\tau^2-1)^2 \Big) u 
    \\
    & \quad +2(\alpha+1)^3\,\tau^6 +3(\alpha+1)^2(2\alpha+7)\,\tau^4 +6(\alpha+1)(\alpha^2-11\alpha-8)\,\tau^2 +(2\alpha+1)(\alpha+5)^2. 
\end{split}
\end{align} 
Moreover, one can observe that $\partial F/\partial u$ is a quadratic polynomial in $u$, whose discriminant is given by
\begin{align}
\operatorname{Disc}\Big(\frac{\partial F}{\partial u}\Big) = -5184\alpha^3\tau^8(\alpha+1)^2(1-\tau)^2(1+\tau)^2 < 0 .
\end{align} 
This shows that $F(\alpha,\tau,u)$ is strictly decreasing function in $u$. 
Since $\sin^2\theta \in [0,1]$, it suffices to consider $0 \le u \le 1$. 
Evaluating at $u=1$, we obtain 
\begin{align}
    F(\alpha, \tau, 1) = (\alpha+5)^2(2\alpha+1)(1-\tau)^3(1+\tau)^3 > 0. 
\end{align}
By monotonicity, it follows that $F(\alpha,\tau,u)>0$ for all $0 \le u \le 1$. 
Consequently, $\operatorname{Res}(D_\theta,D_\theta') \neq 0,$ which contradicts the existence of a multiple real root.
\end{proof}

Before proceeding to the proof of Theorem~\ref{Thm_NWishart}, 
it is instructive to establish the following proposition, which provides a partial result by identifying the intersection of $W(X^{\rm w})$ with the real axis. This highlights the underlying structure of the two-matrix model and clarifies why free additive convolution plays a central role in the analysis. 
For this, we define
\begin{equation}  \label{quartic polynomial of x}
D(x): = D_\theta(x)|_{\theta=0},
\end{equation}
where $D_\theta$ is given by \eqref{quartic polynomial of x among theta}. 

\begin{prop} \label{Prop_Discriminant}
Let $ \alpha \ge 0 $ and $0 \le \tau < 1$. 
Then the intersection of $W(X^{\rm w})$ with the real axis converges almost surely to the real roots of the quartic equation $D(x)=0$.
\end{prop}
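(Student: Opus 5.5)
The plan is to read off the real-axis section of $W(X^{\rm w})$ from the support function at $\theta=0$ and $\theta=\pi$. The endpoints are $\lambda_{\max}(\operatorname{Re}X^{\rm w})$ and $\lambda_{\min}(\operatorname{Re}X^{\rm w})$.

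\textbf{Reducing to these two eigenvalues.} In general $W(A)\cap\mathbb R$ is only contained in $[\lambda_{\min},\lambda_{\max}]$ of $\operatorname{Re}A$. Here, however, $X_1,X_2$ are invariant in law under entrywise complex conjugation, so the limiting set is symmetric about the real axis. A convex set symmetric about $\mathbb R$ meets $\mathbb R$ exactly in $[-h(\pi),h(0)]$, where $h$ is its support function. Combined with Lemma~\ref{Lem_uniform convergence}, once we know the almost sure limits of the two extreme eigenvalues of $\operatorname{Re}X^{\rm w}$, the section converges to the interval between them. Equivalently, its two endpoints converge to these limits, and these should be the real roots of $D$.

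\textbf{Structure of the Hermitian part.} Expanding \eqref{def of X1 X2} gives
\[
X_1X_2^* = (1+\tau)PP^*-(1-\tau)QQ^* + \sqrt{1-\tau^2}\,(QP^*-PQ^*).
\]
The last term is anti-Hermitian, so
\[
\operatorname{Re}X^{\rm w}=(1+\tau)PP^*-(1-\tau)QQ^*.
\]
This is a difference of two independent complex Wishart matrices with aspect ratio $1+\alpha$ and entry variance $1/(2N)$. Each is unitarily invariant, so the two summands are asymptotically free (by the statement recalled in Section~\ref{Section_prelimiaries}). Hence the empirical spectral measure converges to $\mu_+\boxplus\mu_-$. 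Here $\mu_+$ is the Marchenko--Pastur law scaled by $(1+\tau)/2$, and $\mu_-$ is the reflection of the Marchenko--Pastur law scaled by $(1-\tau)/2$.

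\textbf{Computing the support.} The Marchenko--Pastur $R$-transform is $R(z)=\frac{(1+\alpha)s}{1-sz}$ for scale $s$, with the obvious sign change for $\mu_-$. By \eqref{R-additivity},
\[
R(z)=\frac{(1+\alpha)s_+}{1-s_+z}-\frac{(1+\alpha)s_-}{1+s_-z},\qquad s_\pm=\tfrac{1\pm\tau}{2}.
\]
Substituting into \eqref{def of R-G-relation} gives $x=R(G)+1/G$, which is a cubic algebraic equation $\mathcal P(G,x)=0$. The soft edges of $\mu_+\boxplus\mu_-$ are the real $x$ at which this cubic in $G$ acquires a double root, i.e.\ where $dx/dG=0$. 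So I would compute the discriminant of $\mathcal P(\cdot,x)$ in $G$. After clearing factors, I expect it to coincide, up to a positive constant, with $D(x)=D_\theta(x)|_{\theta=0}$; the factor $\mathsf c=1$ matches $\theta=0$.

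By Lemma~\ref{Lem_real root count}, $D$ has exactly two real roots. Hence the limiting measure has a single-interval support whose endpoints are these roots. A sign check, such as continuity in $\tau$ from the $\tau\to1$ limit $[\lambda_-,\lambda_+]$, identifies which roots are edges.

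\textbf{Main obstacle: no outliers.} We need that the extreme eigenvalues converge almost surely to the edges of the support, not merely that the spectral measure converges. I would invoke strong asymptotic freeness for independent GUE/Wishart-type Gaussian matrices (Haagerup--Thorbj{\o}rnsen, Male, Collins--Male). This gives norm convergence for any polynomial in $P,P^*,Q,Q^*$, and in particular convergence of $\lambda_{\max}$ and $\lambda_{\min}$ of $\operatorname{Re}X^{\rm w}$ to the edges. The rectangular shape is handled by embedding $P,Q$ into square matrices with projections. This step, together with checking the discriminant identity symbolically, is where the real work lies.
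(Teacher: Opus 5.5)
Your proposal is correct and follows essentially the paper's route. You write $\re X^{\rm w}=(1+\tau)PP^*-(1-\tau)QQ^*$, add the two scaled and signed Marchenko--Pastur $R$-transforms, and take the discriminant of the resulting cubic in $G$ to recover $D$. You then invoke almost sure convergence of the extreme eigenvalues (strong asymptotic freeness for Wishart-type matrices) together with Lemma~\ref{Lem_uniform convergence}. Your conjugation-symmetry argument, showing that the real section of the limit set is exactly $[-\lambda(\pi),\lambda(0)]$, is a useful detail the paper leaves implicit. Note, though, that Lemma~\ref{Lem_uniform convergence} needs the limits of $\lambda_{\max}(\re(e^{i\theta}X^{\rm w}))$ for every $\theta$, not just $\theta\in\{0,\pi\}$; the strong-convergence input you cite does supply these.
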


\begin{proof}  
By the definition of $X^{\rm w}$ in~\eqref{def of nWishart}, we have
\begin{equation}
\re (X^{ \rm w }) = (1+\tau)PP^{*} -(1-\tau)QQ^{*}. 
\end{equation} 
Note that both $(1+\tau)PP^{*}$ and $-(1-\tau)QQ^{*}$ are scaled Wishart matrices.

Let $\mu$ and $\nu$ denote the limiting eigenvalue distributions of 
$(1+\tau)PP^{*}$ and $-(1-\tau)QQ^{*}$, respectively. 
Each of these is a suitably rescaled Marchenko–Pastur distribution. 
Using the explicit form of the $R$-transform of the Marchenko–Pastur law 
(see e.g. \cite[Chapter~12]{NS06}), together with the scaling relation 
\begin{equation} \label{def of scaling relation of R}
R_{cm}(z) = cR_{m}(cz), 
\end{equation}
it follows that
\begin{align}
    R_{\mu}(z) = \frac{(1+\alpha)(1+\tau)}{2 - (1+\tau)z}, \qquad  R_{\nu}(z) = -\frac{(1+\alpha)(1-\tau)}{2 + (1-\tau)z}. 
\end{align}

Let $G$ denote the Cauchy transform of the free additive convolution 
$\mu \boxplus \nu$. 
By the additivity of the $R$-transform~\eqref{R-additivity} and the relation between the $R$-transform and the Cauchy transform~\eqref{def of R-G-relation}, we obtain
\begin{equation} \label{CYR}
    \frac{(1+\alpha)(1+\tau)}{2 - (1+\tau)G(z)} - \frac{(1+\alpha)(1-\tau)}{2 + (1-\tau)G(z)} + \frac{1}{G(z)} = z. 
\end{equation}
Rearranging \eqref{CYR} yields the cubic equation in $G(z)$:
\begin{equation} \label{Polynomial of Cauchy transform}
    z(1-\tau^2)G(z)^3 + \Big( (2\alpha + 1)(1-\tau^2)+4 \tau z \Big) G(z)^2 + (4 \tau \alpha - 4z)G(z) + 4 = 0.
\end{equation}
By the Stieltjes inversion formula, 
\begin{equation}
    \frac{d(\mu \boxplus \nu)}{dx}(x) = - \lim_{\epsilon \to 0} \im G(x+i \epsilon), 
\end{equation}
the endpoints of the support of $\mu \boxplus \nu$ correspond to those real values of $x$ for which the discriminant $D$ of the cubic equation~\eqref{Polynomial of Cauchy transform} vanishes. 
Indeed, when $D<0$, the equation has one real root and a pair of complex conjugate roots, whereas for $D \ge 0$ all roots are real.

Recall that the discriminant of the cubic polynomial is given by 
\begin{equation} \label{discriminant of cubic}
\operatorname{Disc}( ax^3+bx^2+cx+d )= b^2c^2 - 4ac^3 - 4b^3 d -27a^2d^2 + 18abcd. 
\end{equation}
Computing the discriminant of~\eqref{Polynomial of Cauchy transform} and dividing the resulting expression by $16$, we obtain a quartic polynomial in $z$, which coincides precisely with $D(z)$ defined in~\eqref{quartic polynomial of x}. Then, by the well-known almost sure convergence of the extremal eigenvalues for sums of independent Wishart matrices (see e.g. \cite{CD07,MS17} and \cite[Theorem~2.3]{FHS22}), together with an application of Lemma~\ref{Lem_uniform convergence} with $\theta=0$, the proof is complete. 
\end{proof}

We now prove Theorem~\ref{Thm_NWishart} by extending the previous proposition to a general angle $\theta$, making appropriate use of rotational invariance.

\begin{proof}[Proof of Theorem~\ref{Thm_NWishart}]
Let $M = N + v$ and fix $\theta \in [0,2\pi]$.  
Define $R := \begin{bmatrix} P & Q \end{bmatrix},$ which is an $N \times 2M$ complex random matrix, where $P$ and $Q$ are the rectangular Ginibre matrices used to define~\eqref{def of X1 X2}.
Next, set
\begin{equation}
T(\theta)
:=
\begin{bmatrix}
(1+\tau)\cos\theta & -i\sqrt{1-\tau^2}\sin\theta \\
i\sqrt{1-\tau^2}\sin\theta & -(1-\tau)\cos\theta
\end{bmatrix},
\qquad
S(\theta) := T(\theta) \otimes I_M.
\end{equation}
A direct computation shows that the eigenvalues of $T(\theta)$ are
\begin{equation} 
\lambda_{\pm}(\theta)
:= \tau\cos\theta \pm
\sqrt{1-\tau^2 \sin^2\theta}.
\end{equation} 
Consequently, $S(\theta)$ has the same two eigenvalues, each with multiplicity $M$.

Note that by \eqref{def of X1 X2} and \eqref{def of nWishart}, we have 
\begin{align*}
RS(\theta)R^{*} &= \begin{bmatrix} P & Q \end{bmatrix}
\begin{bmatrix} (1+\tau) \cos(\theta) I_M & -i \sqrt{1-\tau^2} \sin(\theta) I_M \\
    i \sqrt{1-\tau^2} \sin(\theta) I_M & -(1-\tau) \cos(\theta) I_M \end{bmatrix}
\begin{bmatrix}
    P^{*} \\ Q^{*}
\end{bmatrix} 
\\
&= \frac{1}{2} \begin{bmatrix} P & Q \end{bmatrix}
\begin{bmatrix}
    \sqrt{1+\tau} I_M & \sqrt{1+\tau} I_M \\
    \sqrt{1-\tau} I_M & -\sqrt{1-\tau} I_M
\end{bmatrix}
\begin{bmatrix}
    0 & e^{i \theta} I_M \\
    e^{-i \theta} I_M & 0
\end{bmatrix}
\begin{bmatrix}
    \sqrt{1+\tau} I_M & \sqrt{1-\tau} I_M \\
    \sqrt{1+\tau} I_M & -\sqrt{1-\tau} I_M
\end{bmatrix}
\begin{bmatrix}
    P^{*} \\ Q^{*}
\end{bmatrix} 
\\
& = \frac{1}{2} \begin{bmatrix} \sqrt{1+\tau}P + \sqrt{1-\tau}Q & \sqrt{1+\tau}P - \sqrt{1-\tau}Q \end{bmatrix}
\begin{bmatrix}
    0 & e^{i \theta} I_M \\
    e^{-i \theta} I_M & 0
\end{bmatrix}
\begin{bmatrix}
    \sqrt{1+\tau} P^{*} + \sqrt{1-\tau}Q^{*} \\ \sqrt{1+\tau}P^{*} - \sqrt{1-\tau}Q^{*}
\end{bmatrix}
\\
&= \frac{1}{2} \Big( e^{i \theta} X_1 X_2^{*} + e^{-i \theta} X_2 X_1^{*} \Big) = \re (e^{i \theta} X^{ \rm w }). 
\end{align*}
Since $S(\theta)$ is Hermitian, it admits the spectral decomposition 
\begin{equation}
S(\theta) = U(\theta) \begin{bmatrix} \lambda_{+}(\theta) I_M & 0 \\ 0 & \lambda_{-}(\theta) I_M  \end{bmatrix} U(\theta)^{*}, 
\end{equation}
where $U(\theta)$ is a deterministic $2M \times 2M$ unitary matrix. 
Consequently, 
\begin{align} \label{Diagonalize Eqn of Real part of X}
\re (e^{i \theta} X^{ \rm w })  = \widetilde{R} \begin{bmatrix} \lambda_{+}(\theta) I_M & 0 \\ 0 & \lambda_{-}(\theta) I_M  \end{bmatrix} \widetilde{R}^{*}, 
\end{align}
where $\widetilde{R} := R U(\theta)$. Viewing $R$ as consisting of $N$ independent rows of i.i.d.\ complex Gaussian vectors, the unitary invariance of the Gaussian distribution implies that $\widetilde{R}$ has the same distribution as $R$. 
Combining the above, we obtain 
\begin{align}
\re (e^{i \theta} X^{ \rm w }) & \stackrel{d}{=}\ R \begin{bmatrix} \lambda_{+}(\theta) I_M & 0 \\ 0 & \lambda_{-}(\theta) I_M  \end{bmatrix} R^{*} = \lambda_{+}(\theta)PP^{*} + \lambda_{-}(\theta)QQ^{*} .
\end{align}

The remainder of the argument parallels that of Proposition~\ref{Prop_Discriminant}. 
Define $\mu_\theta$ and $\nu_\theta$ to be the limiting spectral distributions of  $\lambda_{+}(\theta) P P^{*}$ and $\lambda_{-}(\theta) Q Q^{*}$, respectively. 
Using the $R$-transform of the Marchenko–Pastur law together with the scaling relation \eqref{def of scaling relation of R}, we obtain
\begin{align}
    R_{\mu_{\theta}}(z) = \frac{(1+\alpha)\lambda_{+}(\theta)}{2 - \lambda_{+}(\theta)z},  \qquad 
    R_{\nu_{\theta}}(z) = \frac{(1+\alpha)\lambda_{-}(\theta)}{2 - \lambda_{-}(\theta)z}. 
\end{align}
Let $G(z)$ denote the Cauchy transform of the free additive convolution 
$\mu_\theta \boxplus \nu_\theta$. 
Applying~\eqref{R-additivity} and~\eqref{def of R-G-relation}, we obtain
\begin{equation}
    z(1-\tau^2)G(z)^3 + \Big( (2\alpha + 1)(1-\tau^2)+4 \tau z \cos(\theta) \Big) G(z)^2 + (4 \tau \alpha \cos(\theta) - 4z)G(z) + 4 = 0. 
\end{equation}
Notice that for $\theta=0$, this reduces to \eqref{Polynomial of Cauchy transform}. 

Computing the discriminant of the above cubic polynomial with respect to $G(z)$ using \eqref{discriminant of cubic}, and dividing the resulting expression by $16$, we obtain--after lengthy but straightforward computations--the quartic equation $D_\theta(x)=0$, where $D_\theta$ is defined in~\eqref{quartic polynomial of x among theta}. 
Then again, by the almost sure convergence of the extremal eigenvalues for sums of independent Wishart matrices, it follows that the largest real root $\lambda(\theta)$ of $D_\theta$ coincides with the almost sure limit of $\lambda_{\max}(\theta,N)$. 
The theorem then follows from Lemma~\ref{Lem_uniform convergence}. 
\end{proof}

\section{Proof of Theorem~\ref{Thm_products eGinUE}} \label{Section_proof products}

In this section, we prove Theorem~\ref{Thm_products eGinUE}. 
The proof proceeds in two steps. First, in Lemma~\ref{tau invariance of the num range of product eGinibre}, we show that for $n \ge 2$, the numerical range of the product model $\mathbf{X}^{\rm e}_n$ defined in \eqref{def of product eGinUE} converges, in the large-$N$ limit, to a disc whose radius is independent of the non-Hermiticity parameter. 
We then determine the explicit form of the radius $R_n$, which yields the value given in \eqref{def of Rn radius products}.

\begin{lem} \label{tau invariance of the num range of product eGinibre}
For $n \ge 2$, let $\mathbf{X}^{\rm e}_n$ be defined as in \eqref{def of product eGinUE}. Then
\begin{equation}
    \lim_{N \to \infty} d_H(W(\mathbf{X}^{\rm e}_n), \mathbb{D}(R_n) ) = 0, 
\end{equation}
almost surely, where $R_n$ is a constant depending only on $n$. 
\end{lem}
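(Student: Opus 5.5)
The plan is to invoke the criterion \eqref{conv of re XN radially symmetric}--\eqref{conv to disc CGLZ14} from \cite[Theorem 4.1]{CGLZ14}, equivalently Lemma~\ref{Lem_uniform convergence} with a constant $\lambda(\theta)\equiv R_n$. So it suffices to show two things: for every fixed $\theta$, the largest eigenvalue of $\re(e^{i\theta}\mathbf X_n^{\rm e})$ converges almost surely to a deterministic constant $R_n$; and this constant depends neither on $\theta$ nor on $\tau$. Since $\re(-A)=-\re A$, the smallest eigenvalue is also controlled, so the largest eigenvalue equals the norm in the limit. The finiteness condition $\lambda(0)+\lambda(-\pi/2)<\infty$ follows from $\|\mathbf X_n^{\rm e}\|\le\prod_k\|X_k^{\rm e}\|$ together with the almost sure boundedness of each factor, via Theorem~\ref{Thm_elliptic and chiral}(i) or the bound $\|X^{\rm e}\|\le\|\re X^{\rm e}\|+\|\im X^{\rm e}\|$.

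The key structural step is $\theta$-invariance. Write $X_1^{\rm e}\cdots X_n^{\rm e}$ and absorb the phase into the first factor, $e^{i\theta}X_1^{\rm e}$. The first factor alone is not rotation invariant, since the ellipse is not a disc. However, for $n\ge2$ we can conjugate by deterministic diagonal-phase-free unitaries and use the fact that the joint $*$-distribution of $(X_1^{\rm e},\dots,X_n^{\rm e})$ is invariant under $X_k\mapsto U_kX_kU_{k+1}^*$ only for $U_k=U_{k+1}$. So I will argue at the level of free probability instead. The limit $\lambda(\theta)$ is the right edge of the spectrum of $\re(e^{i\theta}x_1\cdots x_n)$, where $x_1,\dots,x_n$ are free elliptic elements. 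Almost sure convergence of the top eigenvalue to that edge (no outliers) follows from strong asymptotic freeness of independent GUE matrices (Haagerup--Thorbj{\o}rnsen, Male), because each $X_k^{\rm e}$ is a fixed linear combination of two independent GUEs $\re G_k,\im G_k$. The edge is then the same as the edge of the norm of a self-adjoint polynomial in free semicirculars.

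To remove $\theta$ and $\tau$, I will show that the $*$-distribution of $x_1x_2\cdots x_n$ coincides with that of $c_1c_2\cdots c_n$ for free circulars $c_k$, whenever $n\ge2$. Each $x_k=\frac{\sqrt{1+\tau}+\sqrt{1-\tau}}2c_k+\frac{\sqrt{1+\tau}-\sqrt{1-\tau}}2c_k^*$ has free cumulants $\kappa(x,x^*)=1$ and $\kappa(x,x)=\kappa(x^*,x^*)=\tau$. In a $*$-moment of the alternating product $x_1\cdots x_n$ and its adjoint, the only nonzero second-order cumulants must pair the same index $k$. Non-crossing pairings of a word in which the letters appear in blocks $x_1x_2\cdots x_n$ or $x_n^*\cdots x_1^*$ can never pair $x_k$ with $x_k$ when $n\ge2$, because an $x_k$--$x_k$ pairing would enclose an odd number of the cyclically ordered other letters. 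This is the combinatorial heart of the argument and mirrors the $\tau$-independence of \cite{ORSV14}. Hence all $*$-moments are $\tau$-independent and equal those at $\tau=0$. At $\tau=0$ the circulars are rotation invariant, so $e^{i\theta}c_1\cdots c_n\stackrel{d}{=}c_1\cdots c_n$, which gives $\theta$-independence. Because norms in a faithful tracial $C^*$-setting are determined by moments, the edge $R_n:=\|\re(c_1\cdots c_n)\|$ depends only on $n$.

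The main obstacle is this pairing argument: carefully verifying that no $x_kx_k$ (or $x_k^*x_k^*$) pair can occur in a non-crossing pairing compatible with the block structure. I would prove it by induction on $n$, removing an innermost adjacent pair, which is always of type $x_kx_k^*$ or $x_k^*x_k$, and observing that this preserves the block structure of the word. A secondary technical point is justifying the no-outlier convergence. Here I would rely on strong convergence for polynomials in independent GUEs, which applies since $\re(e^{i\theta}\mathbf X_n^{\rm e})$ is a self-adjoint polynomial in them.
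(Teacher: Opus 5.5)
Your proposal is essentially the paper's proof. You represent each factor through free semicirculars, pass to the limit by Haagerup--Thorbj{\o}rnsen strong convergence, apply the disc criterion \eqref{conv to disc CGLZ14}, and use the moment--cumulant expansion to show that for $n\ge 2$ no $x_k$--$x_k$ pairing (weight $\tau$) survives, so only $\kappa_2(x_k,x_k^*)=1$ contributes. Getting $\theta$-independence from rotation invariance of $e^{i\theta}c_1$, rather than from the paper's $\Delta(\epsilon)=0$ count, is an equivalent shortcut.

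For the key pairing step, use the parity count you allude to, which is exactly the paper's argument. If two $x_k$'s in dot blocks are paired with $a$ full blocks between them, the enclosed segment contains $x_k$ exactly $a$ times and each $x_p$, $p\neq k$, exactly $a+1$ times, so one of these counts is odd. Your induction does not close as stated. Removing an adjacent pair such as $x_nx_n^*$ truncates the two blocks it straddles, so the block structure is not preserved and $n$ does not decrease. It works only if you enlarge the class to all words tracing a walk on $\mathbb{Z}/n\mathbb{Z}$ and induct on word length.
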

\begin{proof}
Note that by \eqref{def of eGinibre}, the elliptic Ginibre matrix can be decomposed as 
\begin{equation} \label{redefine eGinibre}
    X^{ \rm e } = \sqrt{\frac{1+\tau}{2}}S+i \sqrt{\frac{1-\tau}{2}} T, 
\end{equation}
where $S$ and $T$ are independent GUE matrices.
For given semicircular elements $s_1, \dots, s_n, t_1, \dots, t_n$, we define 
\begin{equation} \label{def of uk and xn e}
    u_k := \sqrt{\frac{1+\tau}{2}} s_k + i \sqrt{\frac{1-\tau}{2}} t_k, \qquad  x^{\rm e}_n := \re \big(e^{i \theta} \prod_{k=1}^{n} u_k \big). 
\end{equation}
For $N \times N$ i.i.d.\ GUE matrices $X_1, X_2, \dots, X_n$ and free semicircular elements $s_1, s_2, \dots, s_n$, it was shown in \cite{HT05} that 
\begin{equation} \label{strong conv to semicircular}
    \lim_{N \to \infty} \| p(X_1, X_2, \dots, X_n) \| = \|p(s_1, s_2, \dots, s_n)\|
\end{equation}
almost surely, where $p$ is an arbitrary $*$-polynomial in $n$ variables. 
By \eqref{redefine eGinibre}, \eqref{def of uk and xn e} and \eqref{strong conv to semicircular}, we have 
\begin{equation}
    \lim_{N \to \infty} \| \re (e^{i \theta} \mathbf{X}^{\rm e}_n) \| = \| x^{\rm e}_n \|. 
\end{equation} 

By \eqref{conv to disc CGLZ14}, it suffices to show that $\| x^{\rm e}_n \|$ is independent of $\theta$ and $\tau$. 
For a free random variable $a$, it is convenient to adopt the notation 
\[
a^{\epsilon(\cdot)} =
\begin{cases}
a, & \text{if } \epsilon(\cdot)=\cdot,
\smallskip 
\\
a^{*}, & \text{if } \epsilon(\cdot)=* .
\end{cases}
\] 
Let $\phi$ denote the tracial state on the underlying non-commutative probability space.
Using the definition \eqref{def of uk and xn e} of $x_n^{\rm e}$ and expanding the $m$-th power, we obtain  
\begin{align} \label{expansion of phi xn power}
\phi \big( (x^{\rm e}_n)^m \big)
&= \phi \bigg( \Big(
\frac{e^{i \theta} \prod_{k=1}^{n} u_k + e^{-i \theta} \prod_{k=n}^{1} u_k^*}{2} \Big)^m \bigg)  = \frac{1}{2^m} 
\sum_{\epsilon:[m] \to \{\cdot, *\}}
(e^{i \theta})^{\Delta(\epsilon)}
\,\phi\big(u^{\epsilon(1)} \cdots u^{\epsilon(m)}\big),
\end{align} 
where
\[
\Delta(\epsilon)
: =
\big|\{k:\epsilon(k)=\cdot\}\big|
-
\big|\{k:\epsilon(k)=*\}\big|.
\]
Here the function $\epsilon:[m]\to\{\cdot,*\}$ records the choice of terms in the expansion of the product: for each $j\in[m]$, the value $\epsilon(j)=\cdot$ corresponds to selecting the factor $e^{i\theta}\prod_{k=1}^n u_k$, while $\epsilon(j)=*$ corresponds to selecting $e^{-i\theta}\prod_{k=n}^1 u_k^*$. We note that here $\prod_{k=n}^1 u_k^* = u_n^* u_{n-1}^* \cdots u_1^*$, and the notation $\prod_{k=n}^1$ is used to emphasise the order of the products. 

To evaluate the expectation on the right-hand side of \eqref{expansion of phi xn power}, we use the moment--cumulant formula of free probability. 
More precisely, if $NC(k)$ denotes the set of non-crossing partitions of $\{1,\dots,k\}$ and 
$\kappa_j$ denotes the $j$-th free cumulant with respect to $\phi$, then for any 
$a_1,\dots,a_k$ we have
\[
\phi(a_1 \cdots a_k)
=
\sum_{\pi \in NC(k)}
\prod_{V \in \pi}
\kappa_{|V|}(a_{i_1},\dots,a_{i_{|V|}}),
\]
where for a block $V=\{i_1,\dots,i_{|V|}\}$ we write
\(
\kappa_{|V|}(a_{i_1},\dots,a_{i_{|V|}})
\)
for the corresponding free cumulant.
Applying this identity to \eqref{expansion of phi xn power}, we have 
\begin{align*}
\phi \big( (x^{\rm e}_n)^m \big)
=
\frac{1}{2^m}
\sum_{\epsilon:[m] \to \{\cdot, *\}}
(e^{i \theta})^{\Delta(\epsilon)}
\sum_{\pi \in NC(mn)}
\prod_{\substack{V \in \pi \\ V=\{k_1,\dots,k_j\}}}
\kappa_j( u_{k_1}^{\epsilon(i_1)}, \dots, u_{k_j}^{\epsilon(i_j)} ). 
\end{align*}
Note that the block $V=\{k_1,\dots,k_j\}$ refers to positions in the word of length $mn$ obtained by expanding
\begin{equation} \label{4.6}
\prod_{l=1}^{m}\Big(\prod_{k=1}^{n}u_k\Big)^{\epsilon(l)}.
\end{equation} 
More precisely, $k_1,\dots,k_j$ indicate the locations in this word where the entries
$u_{k_1}^{\epsilon(i_1)},\dots,u_{k_j}^{\epsilon(i_j)}$ appear.
In particular, each position corresponds to one of the generators $u_1,\dots,u_n$ or their adjoints, and therefore the collection of generator indices occurring in the entire word forms the multiset
\[
\{1,\dots,1,\,2,\dots,2,\,\dots,\,n,\dots,n\},
\]
where each element $1,\dots,n$ appears exactly $m$ times.

Since the variables $u_k$ are centered and semicircular cumulants vanish for orders different from two, we have $\kappa_j(u_{k_1}^{\epsilon(i_1)}, \dots, u_{k_j}^{\epsilon(i_j)}) = 0$ for $j \not=2 $. 
Consequently, only pairings contribute to the moment expansion, and therefore
\begin{equation} \label{moment of product of eGinibre}
\phi \big( (x^{\rm e}_n)^m \big)
=
\frac{1}{2^m}
\sum_{\epsilon:[m]\to\{\cdot,*\}}
(e^{i\theta})^{\Delta(\epsilon)}
\sum_{\pi\in NC_2^{\epsilon}(mn)}
\prod_{\substack{V\in\pi\\ V=\{p,q\}}}
\kappa_2(u_{p}^{\epsilon(i)},u_{q}^{\epsilon(j)}).
\end{equation} 
Here $NC_2^{\epsilon}(mn)$ denotes the set of non-crossing pairings of the word of length $mn$ obtained from the expansion of \eqref{4.6}. 
As before, we slightly abuse notation when writing $p$ and $q$: for a block $V=\{p,q\}$ they represent the positions in this length-$mn$ word, whereas in $u_p$ and $u_q$ they indicate the indices of the free generators appearing at those positions. 

Since $\kappa_2(u_{p}^{\epsilon(i)}, u_{q}^{\epsilon(j)}) = 0$ whenever $p \neq q$, only those non-crossing pairings in $NC_2^{\epsilon}(mn)$ contribute in which $u_{p}^{\epsilon(i)}$ and $u_{q}^{\epsilon(j)}$ are paired if and only if $p=q$.
Therefore, each generator $u_k$ can only be paired with another occurrence of the same generator.
Since the subscript $k=1,\dots,n$ appears exactly $m$ times in the length-$mn$ word, this is possible only when $m$ is even. 
 
Suppose that for some $k$, the elements $u_k^{\epsilon(i)}$ and $u_k^{\epsilon(j)}$ are paired while $\epsilon(i)=\epsilon(j)$. 
Without loss of generality, assume that $\epsilon(i)=\cdot$. 
Consider the word of length $mn$ appearing in the expansion of $(x_n^{\rm e})^m$. Here the $u_k$ in the first block $(u_1 \cdots u_k \cdots u_n)$ is paired with the $u_k$ in the second block $(u_1 \cdots u_k \cdots u_n)$.
Between these two blocks, the word consists of a concatenation of blocks of the form
\emph{dot blocks} $(u_1\cdots u_n)$ or \emph{star blocks} $(u_n^*\cdots u_1^*)$.
Let $a$ denote the total number of such blocks occurring between the two blocks containing the paired entries; see Figure~\ref{Fig_pairing with a blocks} for an illustration.

\begin{figure}[h]
\centering
\begin{tikzpicture}[
    font=\small,
    every node/.style={inner sep=0pt, outer sep=0pt},
    mainblock/.style={draw, rectangle, minimum width=2.4cm, minimum height=0.95cm},
    subblock/.style={draw, rectangle, minimum width=2.0cm, minimum height=0.95cm},
    pairarc/.style={thick},
    blocklabel/.style={font=\small},
    scale=0.9
]

\node[mainblock] (B1) at (0,0) {};
\node at ($(B1.center)+(-0.9,0)$) {$u_1$};
\node at ($(B1.center)+(-0.45,0)$) {$\cdots$};
\node (uk1) at ($(B1.center)+(0,0)$) {$u_k$};
\node at ($(B1.center)+(0.45,0)$) {$\cdots$};
\node at ($(B1.center)+(0.9,0)$) {$u_n$};

\node[subblock] (M1) at (3.2,0) {};
\node at (M1.center) {$u_1\cdots u_n$};

\node[subblock] (M2) at (6.2,0) {};
\node at (M2.center) {$u_n^*\cdots u_1^*$};

\node at (8.2,0) {$\cdots$};
\node at (-2,0) {$\cdots$};
\node at (15.2,0) {$\cdots$};

\node[subblock] (M3) at (10.2,0) {};
\node at (M3.center) {$u_1\cdots u_n$};

\node[mainblock] (B2) at (13.2,0) {};
\node at ($(B2.center)+(-0.9,0)$) {$u_1$};
\node at ($(B2.center)+(-0.45,0)$) {$\cdots$};
\node (uk2) at ($(B2.center)+(0,0)$) {$u_k$};
\node at ($(B2.center)+(0.45,0)$) {$\cdots$};
\node at ($(B2.center)+(0.9,0)$) {$u_n$};

\node[blocklabel] at ($(M1.north)+(0,0.45)$) {dot block};
\node[blocklabel] at ($(M2.north)+(0,0.45)$) {star block};
\node[blocklabel] at ($(M3.north)+(0,0.45)$) {dot block};

\draw[pairarc]
($(uk1.north)+(0,0.1)$)
.. controls ($(uk1.north)+(0,1.75)$) and ($(uk2.north)+(0,1.75)$) ..
($(uk2.north)+(0,0.1)$);

\draw[decorate,decoration={brace,mirror,amplitude=5pt}]
($(M1.south west)+(-0.25,-0.45)$) -- ($(M3.south east)+(0.25,-0.45)$)
node[midway,yshift=-0.45cm] {$a$ intermediate blocks};

\end{tikzpicture}
\caption{Pairing of two occurrences of $u_k$. 
The segment between them consists of $a$ intermediate blocks, each being either a dot block $(u_1\cdots u_n)$ or a star block $(u_n^*\cdots u_1^*)$.}
\label{Fig_pairing with a blocks}
\end{figure}

Now consider the subsequence of the elements $u_{p}^{\epsilon(s)}$ lying between the two paired occurrences of $u_k$. 
These elements must themselves form a non-crossing pairing such that $u_{p}^{\epsilon(s)}$ and $u_{q}^{\epsilon(t)}$ are paired only if $p=q$.
In this subsequence the generator $u_k$ appears exactly $a$ times, hence $a$ must be even. 
On the other hand, for any $p\neq k$, the generator $u_p$ appears exactly $a+1$ times, which must also be even for such a pairing to exist. 
This is impossible, yielding a contradiction (here we use $n\ge2$).


Therefore the only non-crossing pairings $\pi\in NC_2^{\epsilon}(mn)$ that contribute are those for which
\[
u_{p}^{\epsilon(s)} \text{ and } u_{q}^{\epsilon(t)} \text{ are paired} \qquad \textup{if and only if} \qquad
p=q \text{ and } \{\epsilon(s),\epsilon(t)\}=\{\cdot,*\}.
\]
We call such a pairing a \emph{good pairing with respect to $\epsilon$}.

 
On the other hand, note that 
\begin{align} \label{2nd cumulant of u_k}
    \kappa_2(u_k, u_k^{*}) &= \kappa_2 \Big( \sqrt{\frac{1+\tau}{2}} s_k + i \sqrt{\frac{1-\tau}{2}} t_k, \sqrt{\frac{1+\tau}{2}} s_k - i \sqrt{\frac{1-\tau}{2}} t_k \Big)  = \frac{1+\tau}{2} + \frac{1-\tau}{2} = 1.
\end{align} 
Indeed, this is the position where the $\tau$-dependence disappears. 

For a good pairing with respect to $\epsilon$ to exist, the number of \emph{dot blocks} must equal the number of \emph{star blocks}; equivalently,
$\Delta(\epsilon)=0.$
Applying this condition together with \eqref{2nd cumulant of u_k} to \eqref{moment of product of eGinibre}, we obtain
\begin{align} \label{tau invariant m-th moment of product eGinibre}
\begin{split} 
\phi \big((x^{\rm e}_n)^m\big)
&=
\frac{1}{2^m}
\sum_{\substack{\epsilon:[m]\to\{\cdot,*\}\\ \Delta(\epsilon)=0}}
\sum_{\pi\in NC_2^{\epsilon,*}(mn)} 1
=
\frac{1}{2^m}
\sum_{\pi\in NC_2^{*}(mn)} 1
=
\frac{1}{2^m}\,\big|NC_2^{*}(mn)\big|. 
\end{split}
\end{align}
Here $NC_2^{\epsilon,*}(mn)$ denotes the set of good pairings contained in 
$NC_2^{\epsilon}(mn)$, while 
\begin{equation} \label{def of NC2 * mn}
NC_2^{*}(mn)
:=
\bigcup_{\substack{\epsilon:[m]\to\{\cdot,*\}\\ \Delta(\epsilon)=0}}
NC_2^{\epsilon,*}(mn)
\end{equation} 
denotes the union of the sets of good pairings over all functions 
$\epsilon:[m]\to\{\cdot,*\}$ satisfying $\Delta(\epsilon)=0$.

By \eqref{tau invariant m-th moment of product eGinibre}, the moment 
$\phi\big((x^{\rm e}_n)^m\big)$ is independent of both $\theta$ and $\tau$. 
Consequently, $\|\re(e^{i\theta}\mathbf{X}^{\rm e}_n)\|$ is also independent of $\theta$ and $\tau$. This completes the proof. 
\end{proof}

By Lemma~\ref{tau invariance of the num range of product eGinibre}, it suffices to treat the case $\tau=\theta=0$. 
For circular elements $c_1, c_2, \dots, c_n$, define
\begin{equation} \label{def of xn prod of cj's}
    x_n := \re\Big( \prod_{k=1}^n c_k \Big).
\end{equation} 
Then, as before, by \cite[Theorem~A]{HT05} (see also \cite[Theorem~2.2]{FHS22}) we have 
\begin{equation}
    \lim_{N \to \infty} \| \re(\mathbf{X}_n) \| = \| x_n \|,
\end{equation}
and \eqref{tau invariant m-th moment of product eGinibre} gives
\begin{equation} \label{moment of xn in terms of NC2}
    \phi \big( x_n^m \big) = \frac{1}{2^m} \big| NC_2^{*}(mn) \big|.
\end{equation}
Note that in \eqref{def of NC2 * mn}, $\big| NC_2^{*}(mn) \big|=0$ unless $m$ is even, since the numbers of dot blocks and star blocks must agree. 
Accordingly, we write $m=2t$ and define
\begin{equation} \label{def of A0}
    A_0(t) := \big| NC_2^{*}(2nt) \big|,
\end{equation}
with the convention that $A_0(0)=1$.  
Furthermore, for $1 \le i \le n-1$, we define $A_i(t)$ to be the number of good pairings of the subsequence between $c_{n-i+1}$ and $c_{n-i+1}^*$ under the condition that the adjacent entries $c_{n-i}$ and $c_{n-i}^*$ are paired; see Figure~\ref{fig:Ai-pairing} for an illustration. 

\begin{figure}[h]
\centering
\begin{tikzpicture}[every node/.style={font=\small},scale=0.9]

\node (c1) at (0,0) {$c_1$};
\node at (0.7,0) {$\cdots$};
\node (ci) at (1.5,0) {$c_{n-i}$};
\node at (2.3,0) {$\cdots$};
\node (cn) at (3.1,0) {$c_n$};

\node at (6.5,0) {$\cdots$};

\node (cnstar) at (9.9,0) {$c_n^*$};
\node at (10.7,0) {$\cdots$};
\node (cistar) at (11.5,0) {$c_{n-i}^*$};
\node at (12.3,0) {$\cdots$};
\node (c1star) at (13.1,0) {$c_1^*$};

\draw (-0.4,-0.5) rectangle (3.5,0.5);
\draw (9.5,-0.5) rectangle (13.5,0.5);

\draw (1.0,-0.5) -- (1.0,0.5);
\draw (12.0,-0.5) -- (12.0,0.5);

\draw[thick]
($(ci.north)$)
.. controls ($(ci.north)+(0,1.5)$) and ($(cistar.north)+(0,1.5)$) ..
($(cistar.north)$);

\draw[decorate,decoration={brace,mirror,amplitude=6pt}]
(3.7,-0.9) -- (9.3,-0.9)
node[midway,yshift=-0.5cm] {$2t$ blocks};

\end{tikzpicture}
\caption{The configuration corresponding to $A_i(t)$.}
\label{fig:Ai-pairing}
\end{figure}

As illustrated in Figure~\ref{fig:Ai-pairing}, the entries $c_{n-i}$ and $c_{n-i}^*$ are paired, and there are $2t$ blocks between the blocks containing these paired elements. 
Since the number of dot blocks must coincide with the number of star blocks, the total number of such blocks must be even, which we denote by $2t$. 
Consequently, there are $2nt$ free random variables between these two blocks, and hence a total of $2nt+2i$ free random variables involved in the pairing.

In the case $t=0$, there is only a single good pairing, and therefore
\begin{equation} 
A_1(0)=A_2(0)=\cdots=A_{n-1}(0)=1.
\end{equation} 
Using a simple combinatorial argument, we derive recursive formulas for $A_i(t)$. 

\begin{lem}
We have  
\begin{equation}\label{recursive formula for A_0}
A_0(t+1)
= 2 \sum_{k=0}^{t} A_{n-1}(k)\,A_0(t-k) 
\end{equation}
and for $i=1,\dots,n-1,$ 
\begin{equation}\label{recursive formula for A_i}
    A_i(t+1)
    =
    A_{i-1}(t+1)
    +
    \sum_{k=0}^{t} A_{i-1}(k)
    \sum_{l=0}^{t-k} A_{n-i-1}(l)\,A_i(t-k-l). 
\end{equation} 
\end{lem}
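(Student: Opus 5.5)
These are first-step (first-return) decompositions, which I will set up in the standard Catalan-type way. Throughout, a word is a cyclic-free linear string of dot blocks $(c_1\cdots c_n)$ and star blocks $(c_n^*\cdots c_1^*)$. A good pairing matches each $c_k$ with a $c_k^*$, and the pairs are non-crossing.

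\textbf{Recursion for $A_0$.} Take a word of $2(t+1)$ blocks and look at the first entry of the first block.

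Suppose the first block is a dot block, so it starts with $c_1$. Then $c_1$ must pair with some $c_1^*$, which is the last entry of some star block. Between these two blocks there are $2k$ full blocks, with $0\le k\le t$. Inside the first block, $c_1$ is paired with the last entry of that star block, so the remaining entries $c_2\cdots c_n$, the $2k$ intermediate blocks, and $c_n^*\cdots c_2^*$ must be paired among themselves. This is exactly the configuration counted by $A_{n-1}(k)$, where $c_{n-(n-1)}=c_1$ is paired with $c_1^*$ and $2k$ blocks lie in between. After the star block come the remaining $2(t-k)$ blocks, which can be paired in $A_0(t-k)$ ways. Non-crossing guarantees that these two regions do not interact, so the count factorises.

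If instead the first block is a star block, the mirror argument applies, starting with $c_n^*$. By the dot/star symmetry it contributes the same amount, which gives the factor $2$. This establishes \eqref{recursive formula for A_0}. The one check needed is that a star-first configuration, read through the involution, is counted by the same $A_{n-1}$; this follows because reversing the word and swapping $*$ maps good pairings bijectively.

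\textbf{Recursion for $A_i$, $i\ge1$.} Fix $c_{n-i}$ paired with $c_{n-i}^*$, with $2(t+1)$ blocks between them. The unpaired entries to be matched are $c_{n-i+1}\cdots c_n$, then the blocks, then $c_n^*\cdots c_{n-i+1}^*$. Condition on the partner of $c_{n-i+1}$; by the good-pairing rule it must be some $c_{n-i+1}^*$. There are two cases.

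\emph{(a)} The partner is the matching final $c_{n-i+1}^*$ adjacent to $c_{n-i}^*$. Then we are left with $c_{n-i+2}\cdots c_n$, the blocks, and the mirrored tail, which is precisely the $A_{i-1}(t+1)$ configuration. For $i=1$ the convention is $A_0$, and indeed then all $2(t+1)$ blocks are free, giving $A_0(t+1)$.

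\emph{(b)} The partner is the $c_{n-i+1}^*$ inside an intermediate star block $\beta$. Let $2k$ blocks lie before $\beta$; this count must be even by the balancing argument already given in the proof of Lemma~\ref{tau invariance of the num range of product eGinibre}. The inner region between the pair is counted by $A_{i-1}(k)$. Inside $\beta$ the entries $c_{n-i}^*\cdots c_1^*$ remain, and these must pair forward with entries in later blocks. The first of them, $c_{n-i}^*$, pairs with a $c_{n-i}$ in a dot block $\gamma$, with $2l$ blocks between $\beta$ and $\gamma$. Reading that region in reverse gives an $A_{n-i-1}(l)$ configuration: in $\beta$ the partner sits at position $n-i$ counted from the end, so $n-(n-i-1)=i+1$, and the relevant adjacent paired index is $c_{n-i}$, playing the role of index $n-(n-i-1)-1$ after reversal. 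Finally, the remaining part of $\gamma$ (namely $c_{n-i+1}\cdots c_n$), the remaining $2(t-k-l)$ blocks, and the tail again form an $A_i(t-k-l)$ configuration. Summing over $k$ and $l$ gives \eqref{recursive formula for A_i}.

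\textbf{Main obstacle.} The step I expect to be hardest is verifying the index bookkeeping in case (b). Specifically, I must check that the middle region is exactly an $A_{n-i-1}$-type region, including the edge case $i=n-1$, where $A_0$ appears and $\beta$ has nothing left after $c_1^*$, so the region is of free type. I must also show that case (b) cannot have $\beta$ be a dot block, and that no other partner positions are possible. Both should follow from the parity argument: an odd count of some generator in an enclosed region forbids a pairing. I will also check that the decomposition is a bijection, so that each good pairing is counted exactly once, using the uniqueness of the partner of $c_{n-i+1}$ and of $c_{n-i}^*$.
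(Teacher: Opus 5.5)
Your proposal is correct and follows the paper's proof essentially step for step. For $A_0$ it is the same first-return decomposition on the partner of $c_1$. For $A_i$ you condition on the partner of $c_{n-i+1}$: the final $c_{n-i+1}^*$ gives $A_{i-1}(t+1)$, and otherwise the partner lies in a star block $\beta$ after $2k$ blocks. You then condition on the partner of the leftover $c_{n-i}^*$ in $\beta$, which produces $A_{i-1}(k)\,A_{n-i-1}(l)\,A_i(t-k-l)$.

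The one thing to repair is the symmetry you invoke, both for the factor $2$ and for the middle region in case (b). Reversal plus swapping $*$ sends the region enclosed by $c_n^*$ and its partner $c_n$ to a region of the same star-first shape, so it does not give the $A_{n-1}(k)$ configuration. To identify that region, and likewise the region between $c_{n-i}^*\in\beta$ and its partner in $\gamma$, with an $A_{n-1}(k)$ (resp.\ $A_{n-i-1}(l)$) configuration, use the position-preserving relabelling $c_j\leftrightarrow c_{n+1-j}^*$. This map exchanges dot and star blocks and preserves good non-crossing pairings. Alternatively, use reversal combined with the index flip $j\mapsto n+1-j$.
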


\begin{proof}

We first derive the recursive formula for $A_0(t+1)$. 
The first block may be either a dot block or a star block. 
By symmetry, it suffices to consider the case where the first block is a dot block; we will multiply the final count by $2$ to account for the two possibilities.

Let the first random variable $c_1$ in the first block be paired with $c_1^*$ in the $(2k+2)$-th block for some $k=0,1,\dots,t$. 
Note that $c_1$ cannot be paired with a $c_1^*$ in a $(2k+1)$-st block, since in that case there would be $2k-1$ blocks between the two paired entries, and hence the numbers of dot and star blocks between them could not be equal.
Thus we consider the configuration illustrated in Figure~\ref{fig:recursive-A0}.

\begin{figure}[h]
\centering
\begin{tikzpicture}[
    font=\small,
    every node/.style={inner sep=0pt, outer sep=0pt},
    mainblock/.style={draw, rectangle, minimum width=2.6cm, minimum height=0.95cm},
    pairarc/.style={thick},scale=0.9
]

\node[mainblock] (B1) at (0,0) {};
\node (c1) at ($(B1.center)+(-0.75,0)$) {$c_1$};
\node at ($(B1.center)+(0,0)$) {$\cdots$};
\node at ($(B1.center)+(0.75,0)$) {$c_n$};

\node at (4.0,0) {$\cdots$};

\node[mainblock] (B2) at (8.0,0) {};
\node at ($(B2.center)+(-0.75,0)$) {$c_n^*$};
\node at ($(B2.center)+(0,0)$) {$\cdots$};
\node (c1star) at ($(B2.center)+(0.75,0)$) {$c_1^*$};

\node at (12.0,0) {$\cdots$};

\draw[pairarc]
($(c1.north)+(0,0.1)$) .. controls ($(c1.north)+(0,1.75)$) and ($(c1star.north)+(0,1.75)$) .. ($(c1star.north)+(0,0.1)$);

\draw[decorate,decoration={brace,mirror,amplitude=6pt}]
(1.5,-0.9) -- (6.5,-0.9)
node[midway,yshift=-0.5cm] {$2k$ blocks};

\draw[decorate,decoration={brace,mirror,amplitude=6pt}]
(9.7,-0.9) -- (14.3,-0.9)
node[midway,yshift=-0.5cm] {$2(t-k)$ blocks};

\draw[decorate,decoration={brace,mirror,amplitude=6pt}]
(-1.2,-1.8) -- (8.4,-1.8)
node[midway,yshift=-0.6cm] {(1)};

\draw[decorate,decoration={brace,mirror,amplitude=6pt}]
(9.7,-1.8) -- (14.3,-1.8)
node[midway,yshift=-0.6cm] {(2)};

\end{tikzpicture}
\caption{Configuration contributing to the recursion for $A_0(t+1)$.}
\label{fig:recursive-A0}
\end{figure}

The segment (1) contains $2k$ blocks (together with the paired entries $c_{1}$ and $c_{1}^*$ adjacent to them.)  
Hence the number of good pairings in this region is $A_{n-1}(k)$. 
On the other hand, the segment (2) contains $2(t-k)$ blocks, which contributes $A_0(t-k)$ admissible pairings.  

Combining these contributions and summing over all possible $k=0,1,\dots,t$, we obtain \eqref{recursive formula for A_0}, where the factor $2$ accounts for the two possible choices of the first block (dot or star).

Next, we derive a recursive formula for $A_i(t+1)$, where $1 \le i \le n-1$. 
Let the first occurrence of $c_{n-i}$ be paired with $c_{n-i}^*$ in the $(2k+2)$-th block. 
As before, it cannot be paired with a copy of $c_{n-i}^*$ in a $(2k+1)$-st block, since the numbers of dot and star blocks in between would then be unequal. 
Here $k$ may take any value in $\{0,1,\dots,t+1\}$. 
If $k=t+1$, then the number of good pairings is simply $A_{i-1}(t+1)$.

For the remaining cases $k=0,1,\dots,t$, we are led to the configuration shown in Figure~\ref{fig:recursive-Ai}.

\begin{figure}[h]
\centering
\begin{tikzpicture}[every node/.style={font=\small},scale=0.9]


\node (c1) at (0,0) {$c_1$};
\node (dots1) at (0.6,0) {$\cdots$};
\draw (1.1,-0.5)--(1.1,0.5);
\node (cni) at (1.5,0) {$c_{n-i}$};
\node (cni1) at (2.4,0) {$c_{n-i+1}$};
\node (dots2) at (3.2,0) {$\cdots$};
\node (cn) at (3.9,0) {$c_n$};

\draw (-0.35,-0.5) rectangle (4.2,0.5);

\node (mid1) at (5.3,0) {$\cdots$};

\node (cnstar) at (6.6,0) {$c_n^*$};
\node (dots3) at (7.3,0) {$\cdots$};
\node (cni1s) at (8.0,0) {$c_{n-i+1}^*$};
\node (cniS) at (8.9,0) {$c_{n-i}^*$};
\node (dots4) at (9.6,0) {$\cdots$};
\node (c1s) at (10.3,0) {$c_1^*$};

\draw (6.2,-0.5) rectangle (10.6,0.5);

\node (mid2) at (11.4,0) {$\cdots$};

\node (cnstar2) at (12.6,0) {$c_n^*$};
\node (dots5) at (13.3,0) {$\cdots$};
\node (cni1s2) at (14.0,0) {$c_{n-i+1}^*$};
\node (cniS2) at (14.9,0) {$c_{n-i}^*$};

\draw (15.4,-0.5)--(15.4,0.5);

\node (dots6) at (16.1,0) {$\cdots$};
\node (c1s2) at (16.8,0) {$c_1^*$};

\draw (12.2,-0.5) rectangle (17.1,0.5);


\draw[thick]
($(cni1.north)+(0,0.1)$)
.. controls ($(cni1.north)+(0,1.5)$) and ($(cni1s.north)+(0,1.5)$) ..
($(cni1s.north)+(0,0.1)$);

\draw[thick]
($(cni.north)+(0,0.1)$)
.. controls ($(cni.north)+(0,2.5)$) and ($(cniS2.north)+(0,2.5)$) ..
($(cniS2.north)+(0,0.1)$);

\draw[decorate,decoration={brace,mirror,amplitude=6pt}]
(4.3,-0.9) -- (6.1,-0.9)
node[midway,yshift=-0.5cm] {$2k$ blocks};

\draw[decorate,decoration={brace,mirror,amplitude=6pt}]
(10.7,-0.9) -- (12.1,-0.9)
node[midway,yshift=-0.5cm] {$2(t-k)$ blocks $+\,1$ dot block};

\draw[decorate,decoration={brace,mirror,amplitude=6pt}]
(2.2,-1.8) -- (8.4,-1.8)
node[midway,yshift=-0.6cm] {(3)};

\draw[decorate,decoration={brace,mirror,amplitude=6pt}]
(8.9,-1.8) -- (14.3,-1.8)
node[midway,yshift=-0.6cm] {(4)};

\draw[dotted] (1.9,0.65) -- (1.9,-2.0);
\draw[dotted] (8.55,0.65) -- (8.55,-2.0);
\draw[dotted] (14.5,0.65) -- (14.5,-2.0);

\coordinate (fourL) at (8.9,-1.8);
\coordinate (fourR) at (14.3,-1.8);


\begin{scope}[yshift=-5cm, xshift=3cm]

\node (cniS) at (0,0) {$c_{n-i}^*$};
\node (dots1) at (0.7,0) {$\cdots$};
\node (c1s) at (1.5,0) {$c_1^*$};

\draw[dotted] (-0.35,-0.5) -- (-0.35,0.5);
\draw (-0.35,0.5) -- (1.8,0.5);
\draw (-0.35,-0.5) -- (1.8,-0.5);
\draw (1.8,-0.5) -- (1.8,0.5);

\node (mid1) at (3.0,0) {$\cdots$};

\node (c1) at (4.3,0) {$c_1$};
\node (dots2) at (5.0,0) {$\cdots$};
\node (cni) at (5.8,0) {$c_{n-i}$};
\node (cni1) at (6.7,0) {$c_{n-i+1}$};
\node (dots3) at (7.5,0) {$\cdots$};
\node (cn) at (8.2,0) {$c_n$};

\draw (4.0,-0.5) rectangle (8.5,0.5);

\node (mid2) at (9.5,0) {$\cdots$};

\node (cnstar) at (10.7,0) {$c_n^*$};
\node (dots4) at (11.4,0) {$\cdots$};
\node (cni1s) at (12.2,0) {$c_{n-i+1}^*$};

\draw (10.3,-0.5) -- (10.3,0.5);
\draw (10.3,0.5) -- (12.7,0.5);
\draw (10.3,-0.5) -- (12.7,-0.5);
\draw[dotted] (12.7,-0.5) -- (12.7,0.5);

\draw[thick]
($(cni.north)+(0,0.1)$)
.. controls ($(cni.north)+(0,1.5)$) and ($(cniS.north)+(0,1.5)$) ..
($(cniS.north)+(0,0.1)$);

\draw[decorate,decoration={brace,mirror,amplitude=6pt}]
(1.9,-0.9) -- (3.9,-0.9)
node[midway,yshift=-0.5cm] {$2l$ blocks};

\draw[decorate,decoration={brace,mirror,amplitude=6pt}]
(8.6,-0.9) -- (10.2,-0.9)
node[midway,yshift=-0.5cm] {$2(t-k-l)$ blocks};

\draw[decorate,decoration={brace,mirror,amplitude=6pt}]
(-0.05,-1.7) -- (6.0,-1.7)
node[midway,yshift=-0.5cm] {(5)};

\draw[decorate,decoration={brace,mirror,amplitude=6pt}]
(6.5,-1.7) -- (12.5,-1.7)
node[midway,yshift=-0.5cm] {(6)};

\draw[dotted] (-0.35,1.5) -- (-0.35,-2.0);
\draw[dotted] (6.2,0.65) -- (6.2,-2.0);
\draw[dotted] (12.7,1.5) -- (12.7,-2.0);

\coordinate (botL) at (-0.35,1.5);
\coordinate (botR) at (12.7,1.5);

\end{scope}


\draw[dashed, blue] (8.55,-2.0) -- (botL);
\draw[dashed, blue] (14.5,-2.0) -- (botR);

\end{tikzpicture} \caption{Configuration contributing to the recursion for $A_i(t+1)$.}
\label{fig:recursive-Ai}
\end{figure}

The region labelled (3) contributes $A_{i-1}(k)$. 
The region labelled (4) is further decomposed according to the position of the first pairing of $c_{n-i}$, which yields the regions (5) and (6).  
More precisely, for each $l=0,1,\dots,t-k$, the region (5) contributes $A_{n-i-1}(l)$, while the region (6) contributes $A_i(t-k-l)$. 
Therefore, we obtain \eqref{recursive formula for A_i}, which completes the proof.  
\end{proof}

We are now ready to prove Theorem~\ref{Thm_products eGinUE}.

\begin{proof}[Proof of Theorem~\ref{Thm_products eGinUE}]

Recall that $A_i$'s are defined in \eqref{def of A0} and the text below. 
For $0 \le i \le n-1$, we consider the generating function 
\begin{equation} \label{def of moment generating for Ai's}
  p_i(x):=\sum_{s=0}^{\infty} A_i(s)x^s. 
\end{equation}
Then by \eqref{recursive formula for A_0} and \eqref{recursive formula for A_i}, we have 
\begin{align}
\label{recursive formula for p_0}
    p_0(x)  -1& =  2x\,p_0(x)p_{n-1}(x)
\end{align}
and 
\begin{align}
     p_j(x) -p_{j-1}(x)& = x\,p_{j-1}(x)p_j(x)p_{n-j-1}(x), \label{recursive formula for p_j} 
\end{align}
where $1\le j\le n-1.$ 

Replacing $j$ by $n-j$ in \eqref{recursive formula for p_j}, we have
\begin{equation}\label{recursive formula for p_{n-j}}
    p_{n-j}(x)-p_{n-j-1}(x)= x\,p_{n-j-1}(x)p_{n-j}(x)p_{j-1}(x).  
\end{equation}
Dividing \eqref{recursive formula for p_j} and \eqref{recursive formula for p_{n-j}}, we obatin
\begin{equation}\label{ratio of p_j (1)}
    \frac{p_j(x)}{p_{j-1}(x)}=\frac{p_{n-j}(x)}{p_{n-j-1}(x)},
    \qquad 1\le j\le n-1.
\end{equation}
Similarly, for $1\le j\le n-2$, replacing $j$ by $n-j-1$ in \eqref{recursive formula for p_j}, we obtain
\begin{equation}
\frac{p_{j}(x)}{p_{j-1}(x)}=\frac{p_{n-j-1}(x)}{p_{n-j-2}(x)},
\qquad 1\le j\le n-2.
\end{equation} 
Combining the above, we obtain that the ratio $p_j(x)/p_{j-1}(x)$ is constant in $j$. By denoting 
\begin{equation} 
r(x):=\frac{p_1(x)}{p_0(x)},
\end{equation} 
we have 
\begin{equation}\label{pj geometric}
    p_j(x)=r(x)^j p_0(x), \qquad 0\le j\le n-1.
\end{equation}

We now determine $r(x)$ in terms of $p_0(x)$. Note that by \eqref{recursive formula for p_0} and \eqref{pj geometric}, we have
\begin{equation}\label{p0 eq 1}
    p_0(x)-1=2x\,r(x)^{\,n-1}p_0(x)^2.
\end{equation}
On the other hand, taking $j=1$ in \eqref{recursive formula for p_j}, it follows from \eqref{pj geometric} that 
\begin{equation}\label{p0 eq 2}
    (r(x)-1)p_0(x)=x\,r(x)^{\,n-1}p_0(x)^3.
\end{equation}
Comparing \eqref{p0 eq 1} and \eqref{p0 eq 2}, we deduce that 
\begin{equation}\label{r in terms of p0}
    r(x)=\frac{p_0(x)+1}{2}.
\end{equation}
Substituting this into \eqref{p0 eq 1}, we arrive at
\begin{equation}\label{polynomial of p_0}
    \frac{p_0(x)-1}{2} = x\Big(\frac{p_0(x)+1}{2}\Big)^{n-1}p_0(x)^2.
\end{equation}

Recall that $x_n$ is given by \eqref{def of xn prod of cj's}. We now consider the moment generating function 
\begin{equation}
M(z)  := 1 + \sum_{m=1}^{\infty} \phi((x_n)^m) z^m . 
\end{equation}
Then by \eqref{moment of xn in terms of NC2}, \eqref{def of A0} and \eqref{def of moment generating for Ai's}, we have  
\begin{align}
    M(z) = 1 + \sum_{t=1}^{\infty} \frac{A_0(t)}{2^{2t}} z^{2t}  = p_0 \Big( \frac{z^2}{4} \Big) . 
\end{align}
Then the Cauchy transform $G(z)$ of the distribution of the free random variable $x_n$ is given by 
\begin{equation} \label{relationship of G and P_0}
    G(z) = \frac{1}{z} M\Big( \frac{1}{z} \Big) = \frac{1}{z} P_0 \Big(\frac{1}{4z^2} \Big) .
\end{equation}
Then by \eqref{polynomial of p_0}, we obtain the algebraic equation 
\begin{equation}
    2^n (zG(z) - 1) = G(z)^2 (zG(z)+1)^{n-1} .
\end{equation}

Let
\begin{equation}
    P(z,w) := w^2(zw+1)^{n-1} - 2^n(zw-1).
\end{equation}
Then $G(z)$ is determined by the equation $P(z,G(z))=0$. 
As before, the endpoints of the support correspond to branch points of this algebraic relation, and hence are obtained by solving
\begin{equation}
    P(z,w)=0, \qquad \partial_w P(z,w)=0 .
\end{equation}

From $P(z,w)=0$ we obtain
\begin{equation}
    w^2(zw+1)^{n-1}=2^n(zw-1),
\end{equation}
while $\partial_w P(z,w)=0$ yields
\begin{equation}
    (zw+1)^{n-2}\big((n+1)zw^2+2w\big)=2^n z .
\end{equation}
Solving this system of equations gives
\begin{equation}
    z = R_n
    =
    \sqrt{
        \frac{\rho_n^2(\rho_n+1)^{n-1}}
        {2^n(\rho_n-1)}
    },
    \qquad
    \rho_n := \frac{1+\sqrt{1+8/n}}{2}.
\end{equation}
This determines the endpoint of the support, which is the constant $R_n$ given in \eqref{def of Rn radius products}, and completes the proof. 
\end{proof}

We end this section by showing that, in the Ginibre matrix case, one may indeed allow repeated factors in the definition of the product model, and that the conclusion of Theorem~\ref{Thm_products eGinUE} remains valid.

\begin{prop} \label{Prop_index invariance for products of GinUEs}
Let $n \ge 1$ and $X_1, X_2, \cdots, X_n$ be $N \times N$ i.i.d. Ginibre matrices. For all $I: [n] \to [n]$, i.e., $i_1, \cdots, i_n \in \{1, \cdots, n\}$, denote $\mathbf{X}_I := X_{i_1}X_{i_2} \cdots X_{i_n}$. Then,
\begin{equation}
    \lim_{N \to \infty} d_H(W(\mathbf{X}_I), \mathbb{D}(R_n)) = 0 .
\end{equation}
almost surely, where $R_n$ is given by \eqref{def of Rn radius products}. 
\end{prop}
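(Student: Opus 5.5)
The plan is to reduce to the moment computation already carried out in the proof of Lemma~\ref{tau invariance of the num range of product eGinibre}, specialised to $\tau=0$.

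\emph{Step 1: strong convergence and rotation invariance.} By the strong convergence result of \cite{HT05} (applied to the Hermitian and anti-Hermitian parts of the $X_j$, which are independent GUEs), we have almost surely
\[
\lim_{N\to\infty}\|\re(e^{i\theta}\mathbf X_I)\| = \|x_I(\theta)\|, \qquad x_I(\theta):=\re\big(e^{i\theta}c_{i_1}\cdots c_{i_n}\big),
\]
where $c_1,\dots,c_n$ are free circular elements. Each $c_j$ is rotation invariant in distribution. Replacing one factor, say $c_{i_1}$, by $e^{i\theta}c_{i_1}$ applies the same rotation to every occurrence of that generator. This maps the free family $(c_1,\dots,c_n)$ to another free circular family with the same joint $*$-distribution. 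Hence $\|x_I(\theta)\|$ is independent of $\theta$, and \eqref{conv to disc CGLZ14} shows that the numerical range converges to a disc. It remains to show that its radius equals $R_n$, i.e. that $\phi(x_I(0)^m)=\phi(x_n^m)$ for all $m$. Compactly supported measures are determined by their moments, so equal moments give equal norms.

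\emph{Step 2: moment expansion.} As in \eqref{expansion of phi xn power}, expand $\phi(x_I^m)$ over $\epsilon:[m]\to\{\cdot,*\}$ and over non-crossing pairings of the word of length $mn$. For circular elements the only nonvanishing second cumulants are $\kappa_2(c_j,c_j^*)=\kappa_2(c_j^*,c_j)=1$. So the admissible pairings are exactly those that pair a $c_j$ with a $c_j^*$ of the \emph{same generator} $j$, and each contributes weight $1$. The claim is then that, for every $\epsilon$, the number of such pairings for the word built from $c_{i_1}\cdots c_{i_n}$ equals the number of good pairings for $c_1\cdots c_n$.

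\emph{Step 3 (main obstacle): showing repeated generators create no extra pairings.} Non-crossing forces matched letters to pair \emph{position-wise}. Label the letters of each block by their position $p\in\{1,\dots,n\}$. In the distinct-index case, a letter at position $p$ in a dot block must pair with position $p$ in a star block, as shown in the proof of Lemma~\ref{tau invariance of the num range of product eGinibre}.

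I will show that the same holds when indices repeat. Suppose instead that position $p$ of a dot block pairs with position $q\neq p$ of a star block, where $i_p=i_q$. I plan to use the parity/counting argument of Lemma~\ref{tau invariance of the num range of product eGinibre}, run on \emph{positions} rather than generators, to derive a contradiction. The segment strictly between the two paired letters must be internally non-crossing paired. Because pairs consist of one $c$ and one $c^*$, the segment must contain equal numbers of starred and unstarred letters. Counting these letters block by block (full blocks contribute $n$ letters of one type, and the partial end blocks contribute $n-p$ and $q-1$ letters) should force $p=q$ and an equal number of dot and star blocks. A cleaner way to organise this is an induction on the innermost pair: an innermost pair consists of adjacent letters, either $c_{i_n}c_{i_n}^*$ at a dot–star boundary or $c_{i_1}^*c_{i_1}$ at a star–dot boundary. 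Both of these are position-matched, and removing the pair preserves the structure.

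Once pairings are position-matched, the admissibility condition ``same generator'' is automatic, since it only requires $i_p=i_p$. The set of admissible pairings therefore coincides with $NC_2^*(mn)$ from the distinct case. This gives $\phi(x_I^m)=2^{-m}|NC_2^*(mn)|$, and Theorem~\ref{Thm_products eGinUE} with $\tau=0$ yields the radius $R_n$. The case $n=1$ is \cite{CGLZ14}, where $R_1=\sqrt2$.
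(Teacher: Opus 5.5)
Your proposal is correct and is essentially the paper's proof: expand $\phi(x_I^m)$ over non-crossing pairings, note that circular cumulants only pair $c_j$ with $c_j^*$, and balance unstarred against starred letters between a paired $c_{i_p}$ and $c_{i_q}^*$ to get $n\mid(p-q)$, hence $p=q$, so the admissible pairings are exactly the good pairings of the distinct-index word. Two small corrections. First, fix the count in Step 3: for a dot block followed later by a star block, the partial blocks contribute $n-p$ unstarred and $n-q$ starred letters, giving $n-p+nt=n-q+ns$ (the counts $q-1$ and $p-1$ belong to the star-then-dot orientation). Second, your rotation trick multiplies $c_I$ by $e^{ik\theta}$, where $k$ is the multiplicity of $i_1$; this still gives $\theta$-independence.
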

\begin{proof}
Let $c_1,c_2,\dots,c_n$ be circular elements. For an index set $I=(i_1,\dots,i_n)$, define
\[
c_I := c_{i_1}c_{i_2}\cdots c_{i_n}, 
\qquad 
x_I := \re(c_I).
\]
Then, as in the proof of Lemma~\ref{tau invariance of the num range of product eGinibre}, we have
\begin{equation}
\lim_{N\to\infty}\|\re(\mathbf{X}_I)\|=\|x_I\|.
\end{equation}
Moreover,
\begin{equation}
\phi\big((x_I)^m\big)
=
\frac{1}{2^m}
\sum_{\epsilon:[m]\to\{\cdot,*\}}
(e^{i\theta})^{\Delta(\epsilon)}
\sum_{\pi\in NC_2^{\epsilon}(mn)}
\prod_{\substack{V\in\pi\\ V=\{i_p,i_q\}}}
\kappa_2\!\left(c_{i_p}^{\epsilon(i)},c_{i_q}^{\epsilon(j)}\right).
\end{equation}

Recall that circular elements satisfy
\[
\kappa_2\!\left(c_{i_p}^{\epsilon(i)},c_{i_q}^{\epsilon(j)}\right)\neq 0
\quad\text{if and only if}\quad
i_p=i_q
\ \text{and}\
\{\epsilon(i),\epsilon(j)\}=\{\cdot,*\}.
\]
Thus, in any contributing pairing, each circular element must be paired with the adjoint of itself.

Consider a good pairing in which $c_{i_p}$ is paired with $c_{i_q}^*$ ($1\le p,q\le n$). Suppose that between these two elements there are $t$ dot blocks and $s$ star blocks.
Then the number of circular elements between $c_{i_p}$ and $c_{i_q}^*$ is
$n-p+nt,$ while the number of adjoint circular elements between them is $
n-q+ns$. 
Since each circular element must pair with the adjoint of itself, these two numbers must coincide, and hence
\begin{equation}
n-p+nt = n-q+ns.
\end{equation}
This implies that $n\mid(p-q)$. As $1\le p,q\le n$, it follows that $p=q$. 
We have therefore shown that in any good pairing, if $c_{i_p}$ is paired with $c_{i_q}^*$, then necessarily $p=q$.

Now let $J=(j_1,\dots,j_n)$ be another index set. Consider the bijection
$c_{i_k}\leftrightarrow c_{j_k}$, $(k=1,\dots,n$).
In this correspondence we regard the symbols $c_{i_p}$ and $c_{i_q}$ as distinct letters whenever $p\neq q$, even if $i_p=i_q$. Under this identification, every good pairing of length $mn$ arising from the expansion of $c_I$ and $c_I^*$ corresponds bijectively to a good pairing arising from the expansion of $c_J$ and $c_J^*$, and vice versa. 
Consequently, in the Ginibre case the numerical range does not depend on the index set.
\end{proof}

\appendix

\section{Geometry of numerical range of non-Hermitian Wishart matrix} \label{Appendix_non ellipse}

In this appendix, we expand upon Remark~\ref{Rem_non ellipse} by discussing that the numerical range of the non-Hermitian Wishart matrix in Theorem~\ref{Thm_NWishart} is not an ellipse, despite its apparent resemblance to one.

Let $\xi_{+} > \xi_{-}$ denote the two real roots of $D(x)$ defined in~\eqref{quartic polynomial of x}, and set 
\begin{equation} \label{def of c and A}
c = \frac{\xi_+ + \xi_-}{2}, \qquad A = \frac{\xi_+ - \xi_-}{2}.
\end{equation} 
Suppose that $\widetilde{E}(\alpha,\tau)$ is an ellipse. 
By identifying its major and minor axes, it must then take the form
\begin{equation} \label{def of tilde E ellipse assumption}
 \Big\{ (x,y)\in \R^2 : \Big(\frac{x-c}{A} \Big)^2 + \Big(\frac{y}{\sqrt{1-\tau^2}B} \Big)^2 \le 1 \Big\}, 
\end{equation}
where $B$ is given by \eqref{def of B}.

\begin{figure}[h]
    \centering
    \includegraphics[width=0.8\textwidth]{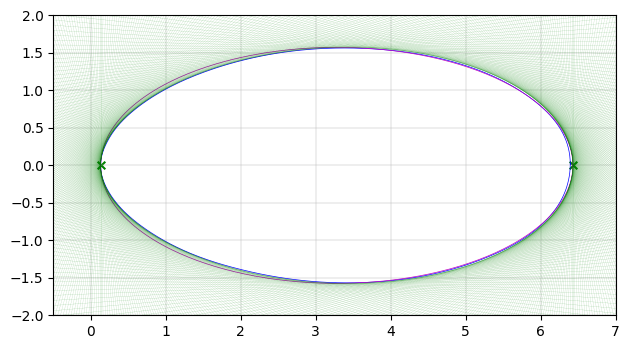}
    \caption{The simulated numerical range of the non-Hermitian Wishart matrix (blue solid curve), compared with the theoretical result in Theorem~\ref{Thm_NWishart} (the region bounded by the green envelope), and with the elliptical ansatz~\eqref{def of tilde E ellipse assumption}. Here, $\alpha=2, \tau=0.8$ and $N=3000$.}
    \label{Fig_non ellipse}
\end{figure}

The numerical simulations in Figure~\ref{Fig_non ellipse} confirm that the empirical numerical range of the non-Hermitian Wishart ensemble agrees with the set defined in~\eqref{def of tilde E}, rather than with the elliptical approximation in~\eqref{def of tilde E ellipse assumption}. 
In Figure~\ref{Fig_non ellipse}, the numerically computed numerical range is shown by the blue curve, while the green lines represent the supporting half-spaces $H_\theta$. The intersection of these half-spaces (the white region bounded by their envelope) corresponds to the theoretical numerical range established in Theorem~\ref{Thm_NWishart}. For comparison, the purple curve depicts the ellipse constructed under the heuristic elliptical assumption~\eqref{def of tilde E ellipse assumption}. 
Although the elliptical approximation appears visually close at first glance, a noticeable discrepancy emerges near the boundary close to the leftmost point of the numerical range. 
In this region, the simulation aligns with our theoretical prediction in Theorem~\ref{Thm_NWishart}, rather than with the elliptical ansatz.

One can in fact show analytically that the ansatz~\eqref{def of tilde E ellipse assumption} is false. 
Indeed, suppose that $\widetilde{E}(\alpha,\tau)$ is of the elliptical form~\eqref{def of tilde E ellipse assumption}. 
Then, by~\eqref{def of tilde E}, its support function (cf.~\eqref{def of support function of ellipse}) is given by
$$
c \cos(\theta) \pm \sqrt{A^2 \cos^2(\theta) + (1-\tau^2) B^2 \sin^2(\theta)}.
$$ 
Consequently, these expressions should coincide with the two real roots of $D_\theta(x)=0$. 
Equivalently, $D_\theta(x)$ should admit the factorisation
\begin{align} \label{factorize D_theta(x)}
\begin{split} 
    D_{\theta}(x) &= \Big(x^2 - 2c \cos(\theta)x + (c^2-A^2) \cos^2(\theta) - (1-\tau^2)b \sin^2 (\theta)\Big) 
    \\
    & \quad \times \Big( 16(1-\tau^2\sin^2(\theta))x^2 + 32(c-\tau(\alpha+2))(1-\tau^2 \sin^2(\theta))\cos(\theta)x + d_{\theta} \Big),
\end{split}
\end{align}
where $b=B^2$ with $B$ given in \eqref{def of B} and 
\begin{align} \label{coeff of x^2 of D_theta(x)}
\begin{split} 
    d_{\theta} &= 16 \tau^4 \alpha^2 \cos^4(\theta) + 8 \tau^2 (1-\tau^2) (2\alpha^2 - 5\alpha - 6) \cos^2(\theta) + 4(1-\tau^2)^2(\alpha^2 - 8 \alpha - 11) 
    \\
    &\quad -16(c^2-A^2)(1-\tau^2\sin^2(\theta))\cos^2(\theta)+16(1-\tau^2)(1-\tau^2\sin^2(\theta))b\sin^2(\theta). 
\end{split}
\end{align} 
Here, $d_\theta$ is determined by matching the coefficients of $x^2$.

Comparing the remaining coefficients in~\eqref{factorize D_theta(x)} leads, after lengthy but straightforward computations, to algebraic constraints on the parameters $\alpha$ and $\tau$. 
Since these identities must hold for all $\theta$, they would force $\alpha$ and $\tau$ to satisfy a nontrivial algebraic relation. 
However, this is impossible, as $\tau \in [0,1]$ and $\alpha \ge 0$ are free parameters. 
This contradiction shows that the ansatz~\eqref{def of tilde E ellipse assumption} cannot be valid.

\bibliographystyle{abbrv}

\begin{thebibliography}{100}

\bibitem{AB10} G. Akemann and M. Bender, \emph{Interpolation between Airy and Poisson statistics for unitary chiral non-Hermitian random matrix ensembles}, J. Math. Phys. \textbf{51} (2010), 103524. 

\bibitem{AB12} G. Akemann and Z. Burda, \emph{Universal microscopic correlations for products of independent Ginibre matrices}, J. Phys. A \textbf{45} (2012), 465210. 

\bibitem{ABK21} G. Akemann, S.-S. Byun and N.-G. Kang, \emph{A non-Hermitian generalisation of the Marchenko–Pastur distribution: from the circular law to multi-criticality}, Ann. Henri Poincaré \textbf{22} (2021), 1035--1068.

\bibitem{ABN24} G. Akemann, S.-S. Byun and K. Noda, \emph{Pfaffian structure of the eigenvector overlap for the symplectic Ginibre ensemble}, Ann. Henri Poincaré (Online), https://doi.org/10.1007/s00023-025-01575-x, arXiv:2407.17935.  

\bibitem{ATTZ20} G. Akemann, R. Tribe, A. Tsareas and O. Zaboronski, 
\emph{On the determinantal structure of conditional overlaps for the complex Ginibre ensemble},  Random Matrices Theory Appl. \textbf{9} (2020), 2050015.

\bibitem{AK22} J. Alt and T. Kr\"{u}ger, \emph{Local elliptic law}, Bernoulli \textbf{28} (2022), no. 2, 886--909.

\bibitem{BS10} Z. Bai and J. W. Silverstein, \emph{Spectral analysis of large dimensional random matrices}, volume 20. Springer, 2010.

\bibitem{BSY88} Z. D. Bai, J. W. Silverstein and Y. Q. Yin, \emph{A note on the largest eigenvalue of a large-dimensional sample covariance matrix}, J. Multivariate Anal. \textbf{26} (1988), 166--168, 1988.

\bibitem{BY93} Z. D. Bai and Y. Q. Yin, \emph{Limit of the smallest eigenvalue of a large-dimensional sample covariance matrix}, Ann. Probab. \textbf{21} (1993), 1275--1294.

\bibitem{BC25} Z. Bao and G. Cipolloni, \emph{Numerical radius of non-Hermitian random matrices}, arXiv:2510.02667. 

\bibitem{BCEHK25} Z. Bao, G. Cipolloni, L. Erd\H{o}s, J. Henheik and O. Kolupaiev, \emph{Decorrelation transition in the Wigner minor process}, Probab. Theory Related Fields (to appear), arXiv:2503.06549.

\bibitem{BCEHK25a} Z. Bao, G. Cipolloni, L. Erd\H{o}s, J. Henheik and O. Kolupaiev, \emph{Law of fractional logarithm for random matrices}, arXiv:2503.18922. 

\bibitem{BNST17} S. Belinschi, M. A. Nowak R. Speicher and W. Tarnowski, \emph{Squared eigenvalue condition numbers and eigenvector correlations from the single ring theorem}, J. Phys. A {\bf 50} (2017), 105204.

\bibitem{Be10} M.Bender, \emph{Edge scaling limits for a family of non-Hermitian random matrix ensembles}, Probab.Theory Related Fields \textbf{147} (2010),241--271.

\bibitem{BBD23} M. Bhattacharjee, A. Bose and A. Dey, \emph{Joint convergence of sample cross-covariance matrices}, ALEA, Lat. Am. J. Probab. Math. Stat. \textbf{20} (2023), 395--423.

\bibitem{BD21} P. Bourgade and G. Dubach, \emph{The distribution of overlaps between eigenvectors of Ginibre matrices}, Probab. Theory Relat. Fields \textbf{177} (2020), 397--464.

\bibitem{BGNTW15} Z. Burda, J. Grela, M. A. Nowak, W. Tarnowski and P.
 Warchol, \emph{Unveiling the significance of eigenvectors in diffusing non-Hermitian matrices by identifying the underlying Burgers dynamics}, Nucl. Phys. B \textbf{897} (2015), 421--447.
 

\bibitem{BF25} S.-S.~Byun and P. J.~Forrester, \emph{Progress on the study of the Ginibre ensembles}, Springer Singapore, KIAS Springer Series in Mathematics (2025).

\bibitem{BF25a} S.-S.~Byun and P. J.~Forrester, \emph{Electrostatic computations for statistical mechanics and random matrix applications}, arXiv:2510.14334.

\bibitem{BN24} S.-S. Byun and K. Noda, \emph{Scaling limits of complex and symplectic non-Hermitian Wishart ensembles}, J. Approx. Theory \textbf{308} (2025), 106148.


\bibitem{CD07} M. Capitaine and C. Donati-Martin, \emph{Strong asymptotic freeness for Wigner and Wishart matrices}, Indiana Univ. Math. J. \textbf{56} (2007), 767--803.

\bibitem{CM98} J. T. Chalker and B. Mehlig, \emph{Eigenvector statistics in non-Hermitian random matrix ensembles}, Phys. Rev. Lett. \textbf{81} (1998), 3367--3370.

\bibitem{CJQ20} S. Chang, T. Jiang and Y. Qi, \emph{Eigenvalues of large chiral non-Hermitian random matrices}, J. Math. Phys. \textbf{61} (2020), 013508. 
 
\bibitem{Ch22} C. Charlier, \emph{Asymptotics of determinants with a rotation-invariant weight and discontinuities along circles}, Adv. Math. \textbf{408} (2022), 108600.

\bibitem{CGLZ14} B. Collins, P. Gawron, A. E. Litvak and K. Zyczkowski, \emph{Numerical range for random matrices}, J. Math. Anal. Appl. \textbf{418} (2014), 516--533.


\bibitem{CESX22} G. Cipolloni, L. Erd\H{o}s, D. Schr\"oder and Y. Xu, \emph{Directional extremal statistics for Ginibre eigenvalues}, J. Math. Phys. \textbf{63} (2022), 103303.


\bibitem{CESX23} G. Cipolloni, L. Erd\H{o}s, D. Schr\"oder and Y. Xu, \emph{On the rightmost eigenvalue of non-Hermitian random matrices}, Ann. Probab. \textbf{51} (2023), 2192--2242.

\bibitem{CR22} N. Crawford and R. Rosenthal, \emph{Eigenvector correlations in the complex Ginibre ensemble}, Ann. Appl. Probab. \textbf{32} (2022), 2706--2754.

\bibitem{Eier93} M. Eiermann, \emph{Fields of values and iterative methods}, Linear Algebra Appl. \textbf{180} (1993), 167--197.

\bibitem{Fo10} P. J. Forrester, \emph{Log-gases and random matrices}, Princeton University Press, Princeton, NJ, 2010.

\bibitem{FHS22} M. Fukuda, T. Hasebe and S. Sato, \emph{Additivity violation of quantum channels via strong convergence to semi-circular and circular elements}, Random Matrices Theory Appl. \textbf{11} (2022), 2250012. 

\bibitem{Fyo18} Y. V. Fyodorov,  \emph{On statistics of bi-orthogonal eigenvectors in real and complex Ginibre ensembles: combining partial Schur decomposition with supersymmetry}, Comm. Math. Phys. {\bf 363} (2018), 579--603.

\bibitem{FT21} Y. V. Fyodorov and W. Tarnowski, \emph{Condition numbers for real eigenvalues in the real elliptic Gaussian ensemble}, Ann. Henri Poincar\'{e} {\bf 22}, (2021), 309--330.

\bibitem{HT05} U. Haagerup and S. Thorbjørnsen, \emph{A new application of random matrices: $Ext(C^{*}_{red}(F_2))$ is not a group}, Ann. of Math. \textbf{162} (2005), 711--775.

\bibitem{GR97} K. E. Gustafson and D. K. M. Rao, \emph{Numerical Range: The Field of Values of Linear Operators and Matrices}, Springer-Verlag, NewYork, 1997.

\bibitem{HP00} F. Hiai and D. Petz, \emph{The semicircle law, free random variables and entropy}, Mathematical Surveys and Monographs, vol. 77, American Mathematical Society, Providence, RI, 2000.

\bibitem{HJ94} R. A. Horn and C. R. Johnson, \emph{Topics in Matrix Analysis}, Cambridge University Press, Cambridge, 1994. 

\bibitem{HM25} X. Hu and Y. Ma, \emph{Convergence rate of extreme eigenvalue of Ginibre ensembles to Gumbel distribution}, arXiv:2506.04560. 

\bibitem{John76} C. R. Johnson, \emph{Normality and the numerical range}, Linear Algebra Appl. \textbf{15} (1976), 89--94.

\bibitem{KS10} E. Kanzieper and N. Singh, \emph{Non-Hermitean Wishart random matrices (I)}, J. Math. Phys. \textbf{51} (2010), 103510. 

\bibitem{LW16} D.-Z. Liu and Y. Wang, \emph{Universality for products of random matrices I: Ginibre and truncated unitary cases}, Int. Math. Res. Not. IMRN \textbf{2016} (2016), 3473--3524.

\bibitem{MS17} J. A. Mingo and R. Speicher, \emph{Free probability and random matrices}, volume 35. Springer, 2017.

\bibitem{MKS24} S. Morimoto, M. Katori and T. Shirai, \emph{Generalized eigenspaces and pseudospectra of nonnormal and defective matrix-valued dynamical systems}, arXiv:2411.06472. 

\bibitem{NO15} H. H. Nguyen and S. O’Rourke, \emph{The elliptic law}, Int. Math. Res. Not. IMRN \textbf{2015} (2015), 7620--7689.

\bibitem{NS06} A. Nica and R. Speicher, \emph{Lectures on the combinatorics of free probability}, volume 335 of London Mathematical Society Lecture Note Series. Cambridge University Press, Cambridge, 2006.

\bibitem{Tad25} E. Tadmor, \emph{On the stability of Runge–Kutta methods for arbitrarily large systems of ODEs}, Comm. Pure Appl. Math. \textbf{78} (2025), 821--855.



\bibitem{ORSV14} S. O'Rourke, D. Renfrew, A.~Sohnikov and V.~Vu, \emph{Products of independent elliptic random matrices}, J. Stat. Phys. \textbf{160} (2015), 89--119.


\bibitem{Os04} J. C. Osborn, \emph{Universal results from an alternate random matrix model for QCD with a baryon chemical potential}, Phys. Rev. Lett. \textbf{93} (2004), 222001. 



\bibitem{Ste96} M. A. Stephanov, \emph{Random matrix model of QCD at finite density and the nature of the quenched limit}, Phys. Rev. Lett. \textbf{76} (1996), 4472.

\bibitem{XZ25} Y. Xu and Q. Zeng, \emph{Large deviations for the extremal eigenvalues of Ginibre ensembles}, arXiv:2512.12711. 

\end{thebibliography}

\end{document}